\documentclass[12pt]{article}
\usepackage[centertags]{amsmath}
\usepackage{amsfonts}
\usepackage{amssymb}
\usepackage{latexsym}
\usepackage{amsthm}
\usepackage{newlfont}
\usepackage{graphicx}
\usepackage{listings}
\usepackage{booktabs}
\usepackage{abstract}
\usepackage{xcolor}
\usepackage{enumerate}

\usepackage{cancel}

\lstset{numbers=none,language=MATLAB}
\setcounter{page}{1}
\date{}

\bibliographystyle{amsplain}

\newlength{\defbaselineskip}
\setlength{\defbaselineskip}{\baselineskip}
\newcommand{\setlinespacing}[1]%
           {\setlength{\baselineskip}{#1 \defbaselineskip}}

\newcommand{\N}{{\mathbb{N}}}

\newcommand{\actaqed}{\hfill $\actabox$}
{\medskip\noindent \textit{Proof of #1. }}%
{\actaqed \medskip}

\def\cD{{\mathcal D}}

\def\cC{{\mathcal C}}

\def\cH{{\mathcal H}}

\def \cV{\mathcal V}
\def \cU{\mathcal U}

\def \CW{\mathcal W}

\def \cM{\mathcal M}
\def\R{{\mathbb R}}
\def\RR{{\mathbb R}}
\def\EE{{\mathbb E}}

\def\PP{{\mathbb P}}

\def \<{\langle}
\def\>{\rangle}

\def \ep{\epsilon}
\def \e{\epsilon}
\def \va{\varepsilon}
\def\al{\alpha}
\def \de{\delta}

\def\vi{\varphi}
\def\la{\lambda}
\def\La{\Lambda}

\def \sa{\sigma}

\def\Og{\Omega}
\def\Ld{\Lambda}

\def\bx{\mathbf x}
\def\by{\mathbf y}
\def\bz{\mathbf z}

\def\bv{\mathbf v}
\def\bu{\mathbf u}

\def\bt{\beta}

\def\Br{\Bigr}
\def\Bl{\Bigl}
\def\f{\frac}

\def\NN{\mathbb{N}}

\newtheorem{Theorem}{Theorem}[section]
\newtheorem{Lemma}{Lemma}[section]

\newtheorem{Definition}{Definition}[section]
\newtheorem{Proposition}{Proposition}[section]
\newtheorem{Remark}{Remark}[section]

\newtheorem{Corollary}{Corollary}[section]

\numberwithin{equation}{section}

\newcommand{\be}{\begin{equation}}
\newcommand{\ee}{\end{equation}}
\newcommand{\wt}{\widetilde}

\def\sub{\substack}
\def\sa{\sigma}

\def\diam{\text{diam}}
\def\ga{\gamma}
\def\Ga{\Gamma}
\def\al{\alpha}
\def\cA{\mathcal{A}}

\begin{document}

		\title{Some improved bounds in sampling  discretization of integral norms}
		
		\author{ F. Dai,  E. Kosov,    V. Temlyakov 	\footnote{
							The first named author's research was partially supported by NSERC of Canada Discovery Grant
							RGPIN-2020-03909.
The second named author is a Young
Russian Mathematics award winner and would like to thank its sponsors and jury.
The second named author's research was prepared within the framework of the
HSE University Basic Research Program.
							The third named author's research was supported by the Russian Federation Government Grant No. 14.W03.31.0031.}}

		\newcommand{\Addresses}{{
				\bigskip
				\footnotesize
					\noindent	F.~Dai \\
				 \textsc{ Department of Mathematical and Statistical Sciences\\
					University of Alberta, Edmonton, Alberta T6G 2G1, Canada\\
					E-mail:} \texttt{fdai@ualberta.ca }
				
									\medskip
										\noindent	E. Kosov \\
									\textsc{Steklov Mathematical Institute of Russian Academy of Sciences, Moscow, Russia; National Research University Higher School of Economics, Russian Federation\\
										E-mail:} \texttt{ked$_{-}$2006@mail.ru}
									\medskip
									
				\noindent	V.N. Temlyakov\\
				 \textsc{University of South Carolina, 1523 Greene St., Columbia SC, 29208, USA;\\  Steklov Institute of Mathematics;    Lomonosov Moscow State University,\\
					 and Moscow Center for Fundamental and Applied Mathematics.
					\\
					E-mail:} \texttt{temlyak@math.sc.edu}
					}}
		\maketitle

	\begin{abstract}
		{The paper addresses a problem of sampling discretization of integral norms of elements of finite-dimensional subspaces satisfying some conditions. We prove sampling discretization results under a standard assumption formulated in terms of the Nikol'skii-type inequality.
{In particular, we obtain} some upper bounds on the number of sample points sufficient for good discretization of the integral  $L_p$ norms, $1\le p<2$, of functions from finite-dimensional subspaces of continuous functions. Our new results improve upon the known results in this direction. We use a new technique based on deep
			results of Talagrand from functional analysis.
}
	\end{abstract}
	
	{\it Keywords and phrases}: Sampling discretization,  Nikol'skii inequality, random vectors,  frame.
	
	{\it MSC classification 2000:} Primary 65J05; Secondary 42A05, 65D30, 41A63.

	\section{Introduction}
	\label{I}
	
	Let $\Omega$ be a  compact subset of $\R^d$ with the probability measure $\mu$. By $L_p$ norm, $1\le p< \infty$,  we understand
	$$
	\|f\|_p:=\|f\|_{L_p(\Omega)}:=\|f\|_{L_p(\Omega,\mu)} := \left(\int_\Omega |f|^pd\mu\right)^{1/p}.
	$$
	By $L_\infty$ norm we understand the uniform norm of continuous functions
	$$
	\|f\|_\infty := \max_{\bx\in\Omega} |f(\bx)|
	$$
	and with some abuse of notation we occasionally write $L_\infty(\Omega)$ for the space $\cC(\Omega)$ of continuous functions on $\Omega$.
	
	By discretization of the $L_p$ norm we understand a replacement of the measure $\mu$ by
	a discrete measure $\mu_m$ with support on a set $\xi =\{\xi^j\}_{j=1}^m \subset \Omega$   in such a way that the error $|\|f\|^p_{L_p(\Omega,\mu)} - \|f\|^p_{L_p(\Omega,\mu_m)}|$ is small for functions from a given class. In this paper we focus on discretization of the $L_p$ norms of elements of finite-dimensional subspaces. Namely, we work on the following problem.
	
	{\bf The Marcinkiewicz discretization problem.} Let $\Omega$ be a subset of $\R^d$ with the probability measure $\mu$. We say that a linear subspace $X_n$ (the index $n$ here, usually, stands for the dimension of $X_n$) of $L_p(\Omega,\mu)$, $1\le p < \infty$, admits the Marcinkiewicz-type discretization theorem with parameters $m\in \N$ and $p$ and positive constants $C_1\le C_2$ if there exists a set
	$$
	\Big\{\xi^j \in \Omega: j=1,\dots,m\Big\}
	$$
	such that for any $f\in X_n$ we have
	\be\label{I1}
	C_1\|f\|_p^p \le \frac{1}{m} \sum_{j=1}^m |f(\xi^j)|^p \le C_2\|f\|_p^p.
	\ee
%

	{\bf The Marcinkiewicz discretization problem with $\e\in (0, 1)$.} We write $X_n\in \cM(m,p,\e)$ if (\ref{I1}) holds with $C_1=1-\e$ and $C_2=1+\e$.

	There are known results on the Marcinkiewicz discretization problem proved for subspaces $X_n$
	satisfying some conditions. There are two types of conditions used in the literature: (I) Conditions on the entropy numbers and (II) Conditions in terms of the Nikol'skii-type inequalities. The reader can find a detailed discussion of known results in the very recent survey \cite{KKLT}. In this paper we only prove some discretization results under conditions (II).
	We now describe these conditions in detail.
	
	{\bf Nikol'skii inequalities.} Let $q\in [1,\infty)$ and $X_n\subset L_\infty(\Omega)$. The inequality
	\begin{equation}\label{I4}
	\|f\|_\infty \leq C\|f\|_q,\   \ \forall f\in X_n
	\end{equation}
	is called the Nikol'skii inequality for the pair $(q,\infty)$ with the constant $C$. In this paper
	it is convenient for us to write the constant $C$ in the form $C=(Kn)^{1/q}$. We obtain here discretization results under the Nikol'skii inequality for the pair $(2,\infty)$. In Section \ref{p1} we
	prove the following Theorem~\ref{MT1}, which is one of the main results of the paper.
	
	
	\begin{Theorem}\label{MT1}
		There exists a positive absolute constant  $C$  such that for any subspace $X_n$ of $\cC(\Og)$  of dimension at most $n$
		satisfying the Nikol'skii inequality
		\begin{equation}\label{M1}
		\|f\|_{\infty} \leq \sqrt{Kn} \|f\|_{L_2(\Og,\mu)},\     \  \   \ \forall f\in X_n
		\end{equation}
		 for some  probability measure $\mu$,
		and for any  $\e \in (0,1)$,
		there is a finite set of points  $\{\xi^1,\dots,\xi^m\}\subset \Og$  with
		$$
		m\leq  C\e^{-2} Kn\log n,
		$$
		which provides the following discretization inequalities for any $f\in X_n$
		\be\label{M2}
		{(1-\e)\|f\|_{L_1(\Og,\mu)} \leq \f 1m \sum_{j=1}^m |f(\xi^j)| \leq  (1+\e)\|f\|_{L_1(\Og,\mu)},}
		\ee
		\be\label{M2.1}
	{(1-\e)\|f\|_{L_2(\Og,\mu)}^2 \leq \f 1m \sum_{j=1}^m |f(\xi^j)|^2 \leq  (1+\e)\|f\|_{L_2(\Og,\mu)}^2.}
		\ee
	\end{Theorem}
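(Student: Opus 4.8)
The plan is to choose $\xi^1,\dots,\xi^m$ as independent $\mu$-distributed random points, to establish \eqref{M2.1} and \eqref{M2} on two separate events of probability bounded away from $0$, and then to intersect them. Fix an $L_2(\Omega,\mu)$-orthonormal basis $u_1,\dots,u_N$ of $X_n$, $N=\dim X_n\le n$; for $f=\sum_{i=1}^N c_iu_i$ one has $\|f\|_2^2=\sum_i c_i^2$, and \eqref{M1} is equivalent to the pointwise bound $\sum_{i=1}^N|u_i(\bx)|^2\le Kn$ for all $\bx\in\Omega$, i.e. to a uniform bound on the diagonal of the reproducing kernel of $X_n$.

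For \eqref{M2.1} it is enough to show that $A_m:=\tfrac1m\sum_{j=1}^mU(\xi^j)U(\xi^j)^{\ast}$, with $U(\bx)=(u_1(\bx),\dots,u_N(\bx))$, satisfies $\|A_m-I_N\|\le\e$ with probability at least $3/4$. Since $\mathbb{E}\,U(\xi)U(\xi)^{\ast}=I_N$ and $\|U(\bx)U(\bx)^{\ast}\|=\sum_i|u_i(\bx)|^2\le Kn$, the matrix Chernoff (Rudelson-type) inequality delivers this once $m\ge C\e^{-2}Kn\log n$; this is the step that produces the factor $\log n$.

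For \eqref{M2}, by homogeneity it suffices to prove that with large probability
\[
\sup_{f\in X_n,\ \|f\|_1\le 1}\Bigl|\tfrac1m\sum_{j=1}^m|f(\xi^j)|-\|f\|_1\Bigr|\le\e.
\]
The key observation is that the class $\mathcal B:=\{f\in X_n:\|f\|_1\le1\}$ is uniformly bounded: if $\|f\|_1\le1$ then by Cauchy--Schwarz and \eqref{M1}, $\|f\|_2^2=\int_\Omega|f|^2\,d\mu\le\|f\|_\infty\|f\|_1\le\sqrt{Kn}\,\|f\|_2$, so $\|f\|_2\le\sqrt{Kn}$ and $\|f\|_\infty\le Kn$. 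Symmetrization followed by Talagrand's contraction principle (the map $t\mapsto|t|$ is $1$-Lipschitz and fixes $0$) reduces the display to a bound $\lesssim\e$ for the Rademacher average $\mathbb{E}_{\xi}\,\mathbb{E}_{\varepsilon}\sup_{f\in\mathcal B}\bigl|\tfrac1m\sum_{j=1}^m\varepsilon_j f(\xi^j)\bigr|$, where $(\varepsilon_j)$ are independent signs. This is the heart of the matter: the crude estimate using only $\mathcal B\subset\{g:\|g\|_2\le\sqrt{Kn}\}$ would force $m\gtrsim\e^{-2}(Kn)^2$, and Dudley's entropy bound still loses a logarithm because the $L_2$-metric entropy of the $N$-dimensional $L_1$-ball $\mathcal B$ diverges logarithmically at small scales. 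The sharp generic-chaining (majorizing-measure) bounds of Talagrand for processes of this type — of the kind underlying his theorems on $(1+\e)$-embeddings of finite-dimensional subspaces of $L_1$ into $\ell_1^m$ — exploit the finer geometry of $\mathcal B$ and give the required $\lesssim\e$ for $m\ge C\e^{-2}Kn\log n$; the $L_2$-discretization already obtained is used as an input, since it identifies the empirical metric $(\tfrac1m\sum_j(f-g)(\xi^j)^2)^{1/2}$ with the $L_2(\Omega,\mu)$-metric up to the factor $1+\e$.

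Intersecting the event $\{\|A_m-I_N\|\le\e\}$ with the event that the supremum above is $\le\e$ — both of probability bounded away from $0$ — yields, for a suitable absolute constant $C$, points $\xi^1,\dots,\xi^m$ with $m\le C\e^{-2}Kn\log n$ satisfying both \eqref{M2} and \eqref{M2.1}. The main obstacle is the $L_1$ step; it is precisely Talagrand's sharp bounds, rather than classical entropy or union-bound arguments, that keep the count at $\e^{-2}Kn\log n$ and thereby improve on the previously known results.
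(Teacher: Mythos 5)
Your plan is a genuinely different route from the paper's: the paper never draws i.i.d.\ points at the final accuracy. It first builds a crude simultaneous $L_1$/$L_2$ discretization with $M\lesssim \e^{-8}Kn(\log (Kn))^3$ points (Lemma \ref{p1L1}), and then repeatedly \emph{halves} this finite point set by a random-sign selection, using Theorem \ref{p1T2} (Rudelson for $p=2$, Talagrand's $K$-convexity bound for $p=1$) at each halving step (Lemma \ref{p1L2}), with the iteration lemmas of Section \ref{IL} controlling the accumulated loss. Your $L_2$ step (matrix Chernoff with the kernel bound $\sum_i|u_i(\bx)|^2\le Kn$) is fine and standard.

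The gap is in your $L_1$ step, which is the heart of the matter and which you assert rather than prove. There is no off-the-shelf ``generic chaining bound of Talagrand'' for $\sup_{\|f\|_{L_1(\mu)}\le 1}\bigl|\f1m\sum_{j}\va_j f(\xi^j)\bigr|$ over i.i.d.\ samples from a general $\mu$; what Talagrand's embedding machinery gives (Theorem \ref{p1T2}, Theorem \ref{T2} of the paper) is a Bernoulli-average bound for a subspace of $\R^M$ in which both the Nikol'skii condition and the $L_1$-ball normalization are taken with respect to the \emph{same fixed discrete measure}. To use it for random points you must pass to the empirical measure and run a self-normalization bootstrap: on the event that \eqref{M2.1} holds, the restricted subspace satisfies the required Nikol'skii condition, the empirical $L_1$-radius of your class is $1+D$ with $D$ the very discrepancy you are bounding, and you solve for $\EE D$. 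You gesture at this (``the $L_2$-discretization is used as an input'') but do not carry it out, and carried out naively it does not give the stated bound: off the good $L_2$ event the only available control is $\sup_{\|f\|_{L_1(\mu)}\le1}\f1m\sum_j|f(\xi^j)|\le\|f\|_\infty\le Kn$, so the exceptional-event contribution is of order $Kn\,\PP(\mathrm{bad})$, and forcing it below $\e$ requires $m\gtrsim \e^{-2}Kn(\log n+\log K+\log(1/\e))$. The $\log(1/\e)$ can be absorbed since $\log(1/\e)\le\e^{-2}$, but the $\log K$ cannot when $K$ is super-polynomial in $n$ (and rescaling $(K,n)\mapsto(1,Kn)$ only yields the weaker target $\e^{-2}Kn\log(Kn)$). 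This is exactly what the paper's deterministic halving avoids: each step is a sign selection on a fixed finite set, so only $\log n$ of the dimension enters, uniformly in $K$ and $\e$. As written, your argument proves a weaker statement than Theorem \ref{MT1}, with its key estimate cited rather than established; to reach $C\e^{-2}Kn\log n$ you need either the paper's iteration scheme or a genuinely new treatment of the exceptional event in the bootstrap.
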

	
	Theorem \ref{MT1} guarantees good discretization with equal weights under the Nikol'skii inequality for the pair $(2,\infty)$ with the bound on the number of points $m\leq C_{1}\e^{-2} Kn\log n$.
	This covers the case of simultaneous discretization of the $L_1$ and $L_2$ norms. In the case of
	simultaneous discretization of the $L_p$, $1<p<2$, and $L_2$ norms the following Theorem \ref{MT2}, {which is proved in Section \ref{p}}, provides a little worse guarantees on the number of points for good discretization. Theorem~\ref{MT2} is the second main result of the paper.
	
	\begin{Theorem}\label{MT2} Let $1<p<2$.  There exists a positive constant  $C(p)$
        such that for any subspace $X_n$ of $\cC(\Og)$ of dimension at most $n$
		satisfying the Nikol'skii inequality
		\begin{equation}\label{M3}
		\|f\|_{\infty} \leq \sqrt{Kn} \|f\|_{L_2(\Og,\mu)},\     \  \   \ \forall f\in X_n
		\end{equation}
		for some  probability measure $\mu$,
		and for any  $\e \in (0,1)$
		there is a finite set of points  $\{\xi^1,\cdots,\xi^m\}\subset \Og$  with
		$$
		m\leq C(p)\e^{-2} Kn(\log (Kn)+\log(1/\e)) (\log(1/\e)+\log \log (Kn))^2,
		$$
		which provides {the following discretization inequalities for any $f\in X_n$}
		\be\label{M4}
	{(1-\e)\|f\|_{L_p(\Og,\mu)}^p \leq \f 1m \sum_{j=1}^m |f(\xi^j)|^p \leq  (1+\e)\|f\|_{L_p(\Og,\mu)}^p.}
		\ee		
		\be\label{M4.1}
	{(1-\e)\|f\|_{L_2(\Og,\mu)}^2 \leq \f 1m \sum_{j=1}^m |f(\xi^j)|^2 \leq  (1+\e)\|f\|_{L_2(\Og,\mu)}^2.}
		\ee		
	\end{Theorem}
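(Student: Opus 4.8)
\medskip
\noindent\emph{Plan of proof.} The plan is to orthonormalize and pass to a frame picture. Fix an orthonormal basis $u_1,\dots,u_n$ of $X_n$ with respect to $\langle\cdot,\cdot\rangle_{L_2(\Og,\mu)}$ and set $\bu(\bx)=(u_1(\bx),\dots,u_n(\bx))\in\R^n$, so that $\int_\Og\bu(\bx)\otimes\bu(\bx)\,d\mu(\bx)=I_n$, while the Nikol'skii inequality (\ref{M3}) is equivalent to the pointwise normalization $\|\bu(\bx)\|_2\le\sqrt{Kn}$ for all $\bx\in\Og$ (since $\|\bu(\bx)\|_2=\sup_{f\in X_n,\,\|f\|_{L_2(\Og,\mu)}\le1}|f(\bx)|$). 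Writing $f=\sum_ic_iu_i$ and identifying $f$ with $\mathbf c\in\R^n$, the two conclusions (\ref{M4}) and (\ref{M4.1}) become: produce $\xi^1,\dots,\xi^m\in\Og$ so that $\tfrac1m\sum_j\bu(\xi^j)\otimes\bu(\xi^j)$ is within $1\pm\e$ of $I_n$ in the operator sense and $\tfrac1m\sum_j|\langle\mathbf c,\bu(\xi^j)\rangle|^p$ is within a factor $1\pm\e$ of $\int_\Og|\langle\mathbf c,\bu(\bx)\rangle|^p\,d\mu(\bx)$ for every $\mathbf c$. I would take one draw of $m$ i.i.d. points distributed according to $\mu$ (which automatically gives equal weights $1/m$ and unbiasedness) and show that, for $m$ as in the statement, both events occur with positive probability; this yields a single point set serving both norms, since it suffices that each failure event have probability $<1/2$.

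The estimate (\ref{M4.1}) is the routine half: $\|\bu(\xi^j)\otimes\bu(\xi^j)\|\le Kn$ by the normalization, and a matrix Chernoff (Rudelson) bound gives the operator estimate with high probability already when $m\asymp\e^{-2}Kn\log n$, well within the budget. The difficulty is the \emph{multiplicative} accuracy in (\ref{M4}), as $\|f\|_{L_p(\Og,\mu)}^p$ can be far smaller than $\|f\|_{L_2(\Og,\mu)}^2$ on the normalized ball. What makes (\ref{M4}) possible at all is, once more, the Nikol'skii inequality: from $\|f\|_{L_2}^2\le\|f\|_\infty^{2-p}\|f\|_{L_p}^p$ one gets the lower bound $\|f\|_{L_p}^p\ge\|f\|_{L_2}^2\|f\|_\infty^{-(2-p)}\ge(Kn)^{-(1-p/2)}\|f\|_{L_2}^p$. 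Hence for a \emph{single} $f$ normalized by $\|f\|_{L_2}=1$, the summands $|f(\xi^j)|^p$ lie in $[0,(Kn)^{p/2}]$ with second moment $\le\|f\|_\infty^p\|f\|_{L_p}^p\le(Kn)^{p/2}\|f\|_{L_p}^p$, so Bernstein's inequality, combined with the lower bound on $\|f\|_{L_p}^p$, gives the multiplicative error $\e$ with high probability as soon as $m\gtrsim\e^{-2}Kn$.

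The main obstacle is upgrading this to a bound uniform over all of $X_n$. For $p=1$ there is the familiar shortcut underlying Theorem~\ref{MT1}: the signs $\sign f(\xi^j)$ partition $X_n$ into at most $\sim m^n$ cells on each of which $\tfrac1m\sum_j|f(\xi^j)|$ is a \emph{linear} functional of $f$, so one reduces to finitely many linear estimates and an $\e$-net. For $1<p<2$ the map $t\mapsto|t|^p$ is not piecewise linear on these cells, this device is unavailable, and instead one must control, after symmetrization, the Bernoulli process $\mathbf c\mapsto\sum_j\theta_j|\langle\mathbf c,\bu(\xi^j)\rangle|^p$ over $\{\|f\|_{L_p}^p\le1\}$. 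This is where the deep results of Talagrand enter: a generic-chaining estimate, run in the $L_2(\mu)$ geometry of the $n$-dimensional space and using the $L_\infty$ term furnished by the Nikol'skii normalization (and ultimately the majorizing-measure / Bernoulli-process structure theory), bounds the relevant supremum while keeping the complexity proportional to $n$ rather than $n^2$ (as a crude VC or second-moment bound would force), with the overhead collapsing to the stated factor $\bigl(\log(Kn)+\log(1/\e)\bigr)\bigl(\log(1/\e)+\log\log(Kn)\bigr)^2$ — truncating the chain at scale $\e$ contributes the $\log(1/\e)$ terms and the refinement of Dudley's integral for the nonlinear increments contributes the double logarithm. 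Once this supremum is controlled, a deviation step shows the random sample of size $m$ simultaneously satisfies the operator estimate and the uniform multiplicative $L_p$ estimate with positive probability, proving (\ref{M4}) and (\ref{M4.1}). I expect the delicate part of Section~\ref{p} to be precisely this chaining bookkeeping: extracting the sharp polylogarithmic factor and handling the nonlinearity of $|t|^p$ in the increments of the process.
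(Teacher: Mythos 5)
Your plan diverges from the paper's argument in a way that leaves the central difficulty unresolved. The paper does not take one i.i.d.\ draw of $m$ points from $\mu$; it first applies a crude known discretization (Lemma \ref{p1L1}) to replace $(\Og,\mu)$ by a discrete set of $M\le C_p\e_0^{-8}Kn(\log(Kn))^3$ points with uniform weights, and then repeatedly \emph{halves} this set by Bernoulli selectors (Lemma \ref{L1p}), controlling the loss at each halving by the expectation bound of Theorem \ref{thm-5-1} for the process $\sum_i\va_i|f(i)|^p$ on the current discrete set, and finally sums the accumulated multiplicative errors with the iteration lemmas of Section \ref{IL}. The stated factor $(\log(Kn)+\log(1/\e))(\log(1/\e)+\log\log(Kn))^2$ arises there concretely: the halving error is of order $\sqrt{Kn\log M/(\al_J M)}\,\log(M/(Kn)+2)$, and since $M$ is only polylogarithmically larger than $\e^{-2}Kn$, one has $\log M\asymp\log(Kn)+\log(1/\e)$ and $\log(M/(Kn))\asymp\log(1/\e)+\log\log(Kn)$. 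This is a different mechanism from the one you invoke (``truncating the chain at scale $\e$'' and ``a refinement of Dudley's integral''), and your sketch gives no actual derivation of the double logarithm.

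That is the genuine gap: your proof delegates everything to an asserted chaining bound for the symmetrized process $\mathbf c\mapsto\sum_j\theta_j|\langle\mathbf c,\bu(\xi^j)\rangle|^p$ over i.i.d.\ points, uniform over the \emph{true} $L_p(\mu)$ ball, with exactly the claimed polylogarithmic overhead — but no such off-the-shelf result exists, and the known bounds in the i.i.d.\ setting (e.g.\ Theorem \ref{T-Kos}, quoted in the paper) give $m\gtrsim\e^{-2}Kn(\log m)^{1+p/2}(\log Kn)^{1-p/2}$, i.e.\ a genuinely worse $(\log)^2$-type factor; improving on precisely that estimate is the point of Theorem \ref{MT2}. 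Even if you tried to feed Theorem \ref{thm-5-1} into your one-shot scheme (applying it to the restriction of $X_n$ to the sample), you would face two unaddressed issues: the Nikol'skii hypothesis for the restricted space holds only conditionally on the $L_2$ empirical event, and, more seriously, Theorem \ref{thm-5-1} controls the supremum over the \emph{empirical} $L_p$ ball, whereas after symmetrization you need it over the true ball — a circularity that requires a bootstrap or, as in the paper, the two-stage structure (crude discretization first, then derandomized halving) to break. Two smaller inaccuracies: Theorem \ref{thm-5-1} concerns Bernoulli selection of a subset of an already discretized domain, not random sampling from $\mu$; and your description of the $p=1$ shortcut (sign-cell linearization plus an $\e$-net) is not how Theorem \ref{MT1} is proved either — the paper uses the same halving scheme with Talagrand's $K$-convexity bound (Theorem \ref{p1T2}). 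As written, your proposal reduces the theorem to an unproven estimate that is the theorem's actual content.
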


Several comments are in order.

{\bf Comment 1.1.}
We point out that one always has $K\ge 1$ in the Nikol'skii inequality assumed  above.

	{\bf Comment 1.2.} Historical discussions on sampling discretization  for  the cases $p=2$ and  $1\le p<2$ 	can be found in
	Subsections {\bf D.15} and    {\bf D.16} of \cite{KKLT} respectively.
	Here we only mention  that the best previously known results are the following.  It was proved in \cite{DPSTT2}  that under the Nikol'skii inequality for the pair $(2,\infty)$ we have $X_n \in \cM(m,p,\e)$ for $1\leq p<2$   provided $m\ge C(p,K,\e)n(\log n)^3$. This last estimate   on $m$ was further improved to $m\ge C(p,K,\e)n(\log n)^2$   for  $1<p<2$ in \cite{Kos}.  We also point out that 	sampling discretization for $p>2$   under the Nikol'skii inequality for the pair $(p,\infty)$ was  studied  in \cite{Kos} as well, where the results were further improved for $p>3$ in \cite{DT2}.

	{\bf Comment 1.3.}
    The Banach-Mazur distance
	between two finite dimensional spaces $X$ and $Y$ of the same dimension is defined to be
	$$
	d(X,Y):= \inf\{\|T\|\|T^{-1}\|\colon\, T\, \text{linear isomorphism from} \, X\,\text{to}\, Y\}.
	$$
	 In  Section 5  of \cite{BLM},
	the authors discuss the following important problem from functional analysis.  Given an $n$ dimensional subspace $X_n$ of $L_q(0,1)$ and $\e>0$, what is the
	smallest positive integer $N=N(X_n,q,\e)$ such that there is a subspace $Y_n$ of $\ell^{N}_q$ with $d(X_n,Y_n)\le 1+\e$? Clearly, if $X_n\in \cM(m,q,\e)$ then $N(X_n,q,C(q)\e)\le m$. On the other hand, however, results on the behavior of  $N(X_n,q,\e)$ do not seem to  imply bounds on $m$ for
	$X_n\in \cM(m,q,\e)$.   Nevertheless,   techniques
	developed for studying behavior of $N(X_n,q,\e)$ turn out to be very useful for  the  Marcinkiewicz-type discretization. A detailed  discussion of this connection can be found in Section 4.2 of \cite{KKLT}.
	 Also, we refer to  \cite{JS} for the history of this problem.
	
	{\bf Comment 1.4.} We give a brief description of the scheme of our proofs of Theorems \ref{MT1} and \ref{MT2}. Let $X_n$ be a subspace of $L_\infty$ of dimension at most $n$. First, we establish results on simultaneous discretization of the $L_2$ and $L_p$ norms. At this step
	we use  {$M \le C_p \e^{-r_1}n(\log n)^{r_2}$} points for good discretization. This step allows us to reduce the original problem of discretization to a problem in $\R^M$. Second, we use deep known results
	of Talagrand and Rudelson on the expectations of random vectors to reduce the number of points for good discretization
{by half} (approximately). Third, we iterate the second step with an appropriate stopping time and finish the proof.
	
{
{\bf Comment 1.5.} Clearly, a good discretization set $\{\xi^j\in\Omega,\, j=1,\dots,m\}$ depends on
the set $\Omega$ and on the probability measure $\mu$. For convenience, we will not indicate this fact
in our further formulations.
}

{
{\bf Comment 1.6.}
We note that Theorems \ref{MT1} and \ref{MT2} combined with
Lewis' change of density theorem (see \cite{Lew} or \cite{SZ})
imply the following two statements concerning weighted discretization
( see  the proof of Theorem 2.3 of \cite{DPSTT2} for the detailed argument).
\begin{Corollary}
		There exists a positive absolute constant  $C$  such that for any subspace $X_n$ of $\cC(\Og)$  of dimension at most $n$,
for any  $\e \in (0,1)$ and any probability measure $\mu$,
		there are  a finite set of points  $\{\xi^1,\dots,\xi^m\}\subset \Og$  with
		$$
		m\leq  C\e^{-2} n\log n,
		$$
and  a set  of nonnegative weights $\{\lambda_j\}_{j=1}^m$
		which provide the following discretization inequalities for any $f\in X_n$:
		\be
	{(1-\e)\|f\|_{L_1(\Og,\mu)} \leq  \sum_{j=1}^m \lambda_j|f(\xi^j)| \leq  (1+\e)\|f\|_{L_1(\Og,\mu)}.}
		\ee
\end{Corollary}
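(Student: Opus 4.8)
\medskip
\noindent\textbf{Proof proposal.} The plan is to derive the Corollary from Theorem~\ref{MT1} by a change of density, along the lines of the proof of Theorem~2.3 in \cite{DPSTT2}. The point of Lewis' theorem is that after replacing $\mu$ by a suitable equivalent probability measure one can put \emph{any} finite dimensional subspace into a position in which the Nikol'skii inequality \eqref{M1} holds with $K=1$; this both removes the factor $K$ from the bound on $m$ and lets us impose no hypothesis on $X_n$ beyond $X_n\subset\mathcal C(\Og)$ and $\dim X_n\le n$ (for notational ease take $\dim X_n=n$).

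First I would invoke Lewis' change of density theorem (see \cite{Lew} or \cite{SZ}) applied to $X_n\subset L_1(\Og,\mu)$: there is a basis $f_1,\dots,f_n$ of $X_n$ such that, setting $\rho:=\bigl(\sum_{i=1}^n f_i^2\bigr)^{1/2}$, one has $\int_\Og f_if_j\,\rho^{-1}\,d\mu=\delta_{ij}$ for all $i,j$ (with the usual convention that the integrand vanishes where $\rho=0$). Summing the diagonal relations gives $\int_\Og\rho\,d\mu=n$, so $d\nu:=n^{-1}\rho\,d\mu$ is a probability measure on $\Og$ with $\nu(\{\rho=0\})=0$. Then $u_i:=\sqrt n\,f_i/\rho$, $i=1,\dots,n$, form an $L_2(\Og,\nu)$ orthonormal basis of the subspace $\widetilde X_n:=\{\,f/\rho:\ f\in X_n\,\}$, and $\sum_{i=1}^n u_i^2\equiv n$ on $\{\rho>0\}$. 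By the standard description of the extremal constant in the $(2,\infty)$ Nikol'skii inequality, namely $\sup_{g\neq 0}\|g\|_\infty^2\,\|g\|_{L_2(\Og,\nu)}^{-2}=\bigl\|\sum_i u_i^2\bigr\|_\infty$, this says
\[
\|g\|_\infty\leq\sqrt n\,\|g\|_{L_2(\Og,\nu)},\qquad\forall\,g\in\widetilde X_n,
\]
i.e. $\widetilde X_n$ has dimension $\le n$ and satisfies \eqref{M1} with $K=1$ for the probability measure $\nu$.

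Next I would apply Theorem~\ref{MT1} to $\widetilde X_n$, the measure $\nu$, the value $K=1$ and the given $\e$: this produces points $\xi^1,\dots,\xi^m$ with $m\le C\e^{-2}n\log n$ such that $(1-\e)\|g\|_{L_1(\Og,\nu)}\le\frac1m\sum_{j=1}^m|g(\xi^j)|\le(1+\e)\|g\|_{L_1(\Og,\nu)}$ for all $g\in\widetilde X_n$. To pass back to $X_n$, fix $f\in X_n$ and take $g:=f/\rho\in\widetilde X_n$: then $\|g\|_{L_1(\Og,\nu)}=n^{-1}\int_\Og|f|\,d\mu=n^{-1}\|f\|_{L_1(\Og,\mu)}$, while $\frac1m\sum_j|g(\xi^j)|=\frac1m\sum_j|f(\xi^j)|\,\rho(\xi^j)^{-1}$. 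Consequently the nonnegative weights $\lambda_j:=n\bigl(m\,\rho(\xi^j)\bigr)^{-1}$ satisfy
\[
(1-\e)\|f\|_{L_1(\Og,\mu)}\le\sum_{j=1}^m\lambda_j|f(\xi^j)|\le(1+\e)\|f\|_{L_1(\Og,\mu)}\qquad\text{for all }f\in X_n,
\]
which is the assertion of the Corollary.

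The only nontrivial ingredient is Lewis' theorem itself, which supplies the basis with $\sum_i u_i^2\equiv n$, i.e. the optimal Nikol'skii position; once that is available the deduction above is routine. The one point that needs a little care is that $\rho$ may vanish on the closed set $Z:=\{\rho=0\}$, on which, by construction, every element of $X_n$ vanishes as well, so that $\widetilde X_n$ need not consist of functions continuous on all of $\Og$. Since $Z$ is $\nu$-null and affects neither the norms $\|f\|_{L_1(\Og,\mu)}$ nor the sample values $|f(\xi^j)|$, one runs the argument on $\Og\setminus Z$, where every element of $\widetilde X_n$ is continuous and, by the Nikol'skii inequality, bounded, so that Theorem~\ref{MT1} applies; this is precisely how the corresponding step is handled in \cite{DPSTT2}. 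Everything else is bookkeeping with the normalising constants.
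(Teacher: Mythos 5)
Your proposal is correct and follows essentially the same route as the paper, which obtains the Corollary from Theorem \ref{MT1} via Lewis' change of density theorem, referring to the proof of Theorem 2.3 in \cite{DPSTT2} for the details you spell out (the Lewis basis with $\sum_i u_i^2\equiv n$, the measure $d\nu=n^{-1}\rho\,d\mu$, application of Theorem \ref{MT1} with $K=1$, and the weights $\lambda_j=n(m\rho(\xi^j))^{-1}$). Your handling of the set where $\rho$ vanishes matches how that reference treats the same point, so nothing essential is missing.
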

\begin{Corollary}
Let $1<p<2$.  There exists a positive constant  $C(p)$
        such that for any subspace $X_n$ of $\cC(\Og)$ of dimension at most $n$,
for any  $\e \in (0,1)$ and any probability measure $\mu$,
		there are  a finite set of points  $\{\xi^1,\cdots,\xi^m\}\subset \Og$  with
		$$
		m\leq C(p)\e^{-2} n(\log (n)+\log(1/\e)) (\log(1/\e)+\log \log (n))^2,
		$$
and  a set  of nonnegative weights $\{\lambda_j\}_{j=1}^m$ 
		which provide  the following discretization inequalities for any $f\in X_n$:
		\be
	{(1-\e)\|f\|_{L_p(\Og,\mu)}^p \leq  \sum_{j=1}^m \lambda_j|f(\xi^j)|^p \leq  (1+\e)\|f\|_{L_p(\Og,\mu)}^p.}
		\ee		
\end{Corollary}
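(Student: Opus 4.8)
The plan is to deduce the corollary from Theorem~\ref{MT2} by a Lewis change of density, following the argument in the proof of Theorem~2.3 of \cite{DPSTT2}. For an arbitrary probability measure $\mu$ the subspace $X_n\subset L_p(\Og,\mu)$ need not satisfy any Nikol'skii inequality with a controlled constant; the role of the change of density is to pass to a measure of the form $\widetilde\mu=c\,w^p\mu$, dividing functions by $w$, so as to produce a copy $\widetilde X_n$ of $X_n$ that \emph{does} satisfy the Nikol'skii inequality for the pair $(2,\infty)$ with an absolute constant, at the price of turning equal-weight discretization for $\widetilde X_n$ into weighted discretization for $X_n$. It suffices to treat the case in which $\mu$ has full support: replace $\Og$ by $\mathrm{supp}(\mu)$ and restrict the functions of $X_n$ to it; the points produced below then lie in $\mathrm{supp}(\mu)\subset\Og$, and both sides of the desired inequality are unchanged under this restriction. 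In this case $X_n$ embeds in $L_p(\Og,\mu)$, and we write $n':=\dim X_n\le n$.

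I would first invoke Lewis' theorem (\cite{Lew},\,\cite{SZ}) to get a nonnegative $w\in L_p(\Og,\mu)$ and a basis $u_1,\dots,u_{n'}$ of $X_n$ with $w^2=\sum_i u_i^2$ and $\int_\Og u_i u_j\,w^{p-2}\,d\mu=\delta_{ij}$; summing the diagonal relations forces $\int_\Og w^p\,d\mu=n'$. Set $d\widetilde\mu:=(n')^{-1}w^p\,d\mu$, a probability measure, and for $f=\sum_i a_i u_i\in X_n$ let $Jf:=f/w$. Routine computations give $\|Jf\|_{L_p(\widetilde\mu)}^p=(n')^{-1}\|f\|_{L_p(\Og,\mu)}^p$ and $\|Jf\|_{L_2(\widetilde\mu)}^2=(n')^{-1}\sum_i a_i^2$, while the Cauchy--Schwarz inequality together with $w^2=\sum_i u_i^2$ gives $\|Jf\|_\infty\le(\sum_i a_i^2)^{1/2}$; hence $\widetilde X_n:=J(X_n)$ satisfies $\|g\|_\infty\le\sqrt{n'}\,\|g\|_{L_2(\widetilde\mu)}$, that is, the Nikol'skii inequality of Theorem~\ref{MT2} with $K=1$. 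Applying Theorem~\ref{MT2} to $\widetilde X_n$ with the measure $\widetilde\mu$, dimension $n'$, the given $\e$ and $K=1$ produces points $\xi^1,\dots,\xi^m\in\Og$ with $m\le C(p)\e^{-2}n'(\log n'+\log(1/\e))(\log(1/\e)+\log\log n')^2$, hence with $m$ below the asserted bound, such that
\[
(1-\e)\|Jf\|_{L_p(\widetilde\mu)}^p\le\frac1m\sum_{j=1}^m|Jf(\xi^j)|^p\le(1+\e)\|Jf\|_{L_p(\widetilde\mu)}^p,\qquad f\in X_n.
\]
Substituting $Jf(\xi^j)=f(\xi^j)/w(\xi^j)$ and multiplying by $n'$ yields exactly the claimed inequality with the nonnegative weights $\lambda_j:=n'\bigl(m\,w(\xi^j)^p\bigr)^{-1}$ (set $\lambda_j:=0$ at any point where $w(\xi^j)=0$, which is a point at which every $f\in X_n$ vanishes).

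The step I expect to require the most care is a technical one: $w$ may vanish somewhere on $\Og$, so the quotients $u_i/w$, though bounded, need not be continuous, whereas Theorem~\ref{MT2} is stated for subspaces of $\cC(\Og)$. This is circumvented, as in \cite{DPSTT2}, by replacing $w$ with $w+\va$ for a sufficiently small $\va=\va(X_n,\mu)>0$: then $Jf=f/(w+\va)$ is continuous, the proportionality $\|Jf\|_{L_p(\widetilde\mu_\va)}^p=c_\va\,\|f\|_{L_p(\Og,\mu)}^p$ with $d\widetilde\mu_\va=c_\va(w+\va)^p\,d\mu$ still holds \emph{exactly}, and the Nikol'skii constant worsens only to an absolute constant, since the Gram matrices $\bigl[\int_\Og u_i u_j(w+\va)^{p-2}\,d\mu\bigr]_{i,j=1}^{n'}$ converge to the identity as $\va\to0$. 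The remaining estimates are unaffected; for the full details of this reduction we refer to the proof of Theorem~2.3 of \cite{DPSTT2}.
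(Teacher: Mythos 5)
Your proposal is correct and follows essentially the same route the paper intends: the paper derives this corollary from Theorem \ref{MT2} via Lewis' change of density (\cite{Lew}, \cite{SZ}), referring to the proof of Theorem 2.3 in \cite{DPSTT2} for the details, which is exactly the argument you carry out. Your handling of the technicalities (restriction to $\supp\mu$, the $w+\va$ regularization to stay inside $\cC(\Og)$ while keeping the Nikol'skii constant absolute) matches that cited argument, so there is nothing further to add.
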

}

The paper is organized as follows.  Certain iteration techniques play an important role in our proofs  of  sampling discretization results.
 In Section \ref{IL}, we prove  several technical  lemmas that will be needed in our  later applications of these  iteration techniques.

 After that, in Section \ref{p1},  we prove Theorem~\ref{MT1}, the discretization theorem of the $L_1$ norm for the finite dimensional subspaces satisfying the Nikol'skii inequality between $L_\infty$ and $L_2$ norms.  The proof combines an iteration technique with  a deep result of  Talagrand \cite{Ta90} stated in Theorem~\ref{p1T2}.
The proof also relies on a technical lemma,  Lemma \ref{p1L1},  on preliminary simultaneous discretizations.
While both  Theorem \ref{p1T2} and  Lemma \ref{p1L1}  were essentially known previously, they  were not clearly stated.   As a result,   we   include a proof of Lemma \ref{p1L1} in  Section \ref{proofL1}, and
  a  proof of Theorem \ref{p1T2}  in  Appendix I  in Section \ref{sec:7} for the sake of completeness.

Section \ref{p} is devoted to the proof of  Theorem \ref{MT2}, the discretization result  of the $L_p$ norm for $1<p<2$.
Indeed, we prove a slight improvement of Theorem \ref{MT2} in this section.  The proof uses a similar iteration technique, however, the key ingredient to the iteration  is
Theorem \ref{thm-5-1},
a  version of {Proposition 2.3 from Talagrand's paper \cite{Tal-Emb}
for $1<p<2$ with uniform  probability measure $\mu$ but weaker assumption on the $n$-dimensional space  $X_n$
(see also Theorem 16.8.2 in \cite{Ta})}.
Theorem~\ref{thm-5-1} can be deduced by modifying the proof  of Theorem 16.8.2 in  \cite{Ta}. Since the proof in  \cite{Ta} is very complicated and difficult,  to make the paper relatively self-contained, we give  a detailed proof of  Theorem \ref{thm-5-1}  in Appendix II in Section \ref{sec:8}.
Our intent there   is to present both  the result and the proof  in their simplest possible form, with a slightly weaker assumption on the space $X_n$,
so that  readers who  are not familiar with  all  the  involved technicalities (e.g.,  the majorizing measure theorem of Talagrand)
 can follow the details  easily.

	In Section \ref{df} we discuss a connection between sampling discretization of integral norms and
	frames. We formulate there (see Comment \ref{df}.2) a simple observation that the properties of a point set $\{\xi^\nu\}_{\nu=1}^m$ to provide the sampling discretization inequalities and to provide a subsystem, which is a frame, of the dictionary consisting of the Dirichlet kernels are equivalent.
	We formulate some direct corollaries of our main results on sampling discretization for construction of finite frames out of  the infinite dictionary consisting of the Dirichlet kernels.
	In Subsection \ref{df2}  we comment on the known results on sampling discretization of the uniform norm, which is closely connected with a special bilinear approximation of the Dirichlet kernels.

\section{Iteration lemmas}
\label{IL}

Iteration techniques play an important role in our proofs. In this section we present the corresponding results. The following Lemma \ref{ILL1} from \cite{NOU} was used in
proving sampling discretization results of the $L_2$ norm.

\begin{Lemma}[{\cite[Lemma 1]{NOU}}]\label{ILL1}
	Let $0<\delta<1/100$, and let $\alpha_j,\beta_j, j=0,1,\dots$, be defined inductively
	$$
\alpha_0=\beta_0=1, \quad \alpha_{j+1}:=\alpha_j\frac{1-5\sqrt{\delta/\alpha_j}}{2}, \quad \beta_{j+1}:=\beta_j\frac{1+5\sqrt{\delta/\alpha_j}}{2}.
	$$
Then there exist a positive absolute constant $C$ and a number $L\in\mathbb{N}$ such that
$$
\alpha_j\geq 100\delta\  \ \text{for}\  \  j\le L, \quad 25\delta\le \alpha_{L+1}<100\delta, \quad \beta_{L+1}<C\alpha_{L+1}.
$$
\end{Lemma}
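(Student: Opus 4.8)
The plan is to exploit the fact that, as long as $\alpha_j\ge 100\delta$, the recursion forces $\{\alpha_j\}$ to decay geometrically with ratio in $[\tfrac14,\tfrac12]$, and then to control the third conclusion by comparing the multiplicative change of $\{\beta_j\}$ with that of $\{\alpha_j\}$ through a telescoping product, rather than estimating the two sequences separately.

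First I would record the basic dichotomy: if $\alpha_j\ge 100\delta$ then $5\sqrt{\delta/\alpha_j}\le \tfrac12$, so the recursion stays well defined (the factors remain positive) and
$$\frac14\ \le\ \frac{\alpha_{j+1}}{\alpha_j}=\frac{1-5\sqrt{\delta/\alpha_j}}{2}\ \le\ \frac12 .$$
Since $\alpha_0=1>100\delta$ and the sequence at least halves at each step while it stays above $100\delta$, the index $L:=\max\{j\ge 0:\ \alpha_j\ge 100\delta\}$ is well defined and finite. By its definition, $\alpha_j\ge 100\delta$ for $j\le L$ and $\alpha_{L+1}<100\delta$, while $\alpha_{L+1}\ge\tfrac14\alpha_L\ge 25\delta$; this already settles the first two assertions. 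I would also iterate $\alpha_{j+1}\le\tfrac12\alpha_j$ to obtain the lower bound $\alpha_j\ge 2^{L-j}\alpha_L\ge 2^{L-j}\cdot 100\delta$ for $0\le j\le L$, which is needed below.

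For the third assertion, observe that the crude estimate $\beta_{L+1}\le\beta_0=1$ together with $\alpha_{L+1}\ge 25\delta$ only gives $\beta_{L+1}/\alpha_{L+1}\le 1/(25\delta)$, which is not absolute; the point is that $\beta_j$ and $\alpha_j$ change at comparable rates. So I would telescope:
$$\frac{\beta_{L+1}}{\alpha_{L+1}}=\frac{\beta_0}{\alpha_0}\prod_{j=0}^{L}\frac{\beta_{j+1}/\beta_j}{\alpha_{j+1}/\alpha_j}=\prod_{j=0}^{L}\frac{1+5\sqrt{\delta/\alpha_j}}{1-5\sqrt{\delta/\alpha_j}} .$$
Writing $x_j:=5\sqrt{\delta/\alpha_j}$, we have $x_j\le\tfrac12$ for $j\le L$, and the elementary inequality $\frac{1+x}{1-x}\le 1+4x\le e^{4x}$ (valid for $0\le x\le\tfrac12$) yields $\beta_{L+1}/\alpha_{L+1}\le\exp\!\bigl(20\sum_{j=0}^{L}\sqrt{\delta/\alpha_j}\bigr)$. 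Plugging in $\alpha_j\ge 2^{L-j}\cdot 100\delta$ gives $\sqrt{\delta/\alpha_j}\le\tfrac1{10}\,2^{-(L-j)/2}$, hence $\sum_{j=0}^{L}\sqrt{\delta/\alpha_j}\le\tfrac1{10}\sum_{k\ge0}2^{-k/2}=\tfrac{2+\sqrt2}{10}$, and therefore $\beta_{L+1}/\alpha_{L+1}\le\exp\!\bigl(2(2+\sqrt2)\bigr)<e^{7}$. Taking $C$ to be any absolute constant exceeding $e^{7}$ completes the proof.

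Everything here is routine once the comparison is set up this way; the only genuine decision is to bound $\beta_{L+1}/\alpha_{L+1}$ by a telescoping product and to notice that the partial sums $\sum_{j\le L}\sqrt{\delta/\alpha_j}$ are dominated by a convergent geometric series, because $\alpha_j$ at least doubles each time $j$ decreases by $1$. The one mildly delicate point is bookkeeping: checking that the recursion never produces a nonpositive term in the relevant range, and handling the degenerate case $L=0$ (which occurs for $\delta$ close to $1/100$), where all the displayed formulas still apply verbatim.
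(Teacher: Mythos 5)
Your proof is correct. One cosmetic point first: the paper itself does not prove this lemma (it is quoted from [NOU, Lemma 1]); what the paper does prove is its own variant, Lemmas 2.2 and 2.3, and your argument is essentially the same mechanism as theirs. In both cases the engine is the observation that, above the threshold, the ratio $\alpha_{j+1}/\alpha_j$ lies in $[\tfrac14,\tfrac12]$, which yields the exit bound $\alpha_{L+1}\ge\tfrac14\alpha_L\ge 25\delta$ and, read backwards, the geometric lower bound $\alpha_j\ge 2^{L-j}\cdot 100\delta$ (compare (2.5) in the paper, $\alpha_j\ge \tfrac{\delta}{\theta^2}2^{s-j}$), so that $\sum_j\sqrt{\delta/\alpha_j}$ is dominated by a convergent geometric series. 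Where you differ slightly is in how the comparison of $\beta$ with $\alpha$ is organized: you telescope the single ratio $\beta_{L+1}/\alpha_{L+1}=\prod_j\frac{1+x_j}{1-x_j}$ and use $\frac{1+x}{1-x}\le e^{4x}$ for $x\le\tfrac12$, whereas the paper's Lemmas 2.2--2.3 bound the product of the factors $1+\varkappa x_j$ from above and of $1-\varkappa x_j$ from below separately (via $1+x\le e^x$ and $1-x\ge e^{-2x}$) and then divide; the two devices are interchangeable, and yours is marginally more compact for this particular statement. The only place where I would tighten the write-up is the definition of $L$: defining it as $\max\{j:\alpha_j\ge 100\delta\}$ tacitly assumes the whole sequence is defined, but once $\alpha_j$ drops below $25\delta$ the next term is negative and the recursion leaves the reals; it is cleaner to take $L$ as the first index with $\alpha_{L+1}<100\delta$ (finite because $\alpha_j\le 2^{-j}$ while above the threshold), and to note that strict decrease gives $\alpha_j\ge 100\delta$ for all $j\le L$. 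Since everything you use happens only for indices $j\le L+1$, where positivity is guaranteed, this is bookkeeping and not a gap.
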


We prove a version of Lemma \ref{ILL1}, which is convenient for us.

\begin{Lemma}\label{ILL2} Let $\de\in (0,1/4)$ and  $\theta\in(0, 1/2]$ be such that $\delta<\theta^2$. Consider the sequence
$$
\al_0=1,\qquad \al_{j+1} = \frac{1}{2}\al_j(1 -(\de/\al_j)^{1/2}),
$$
for $j=0,\dots, s$, where $s:=s(\de, \theta)\in \N_0$ is determined by the condition
\be\label{IL0}
\al_s \ge \frac{\de}{\theta^2},\qquad \al_{s+1} <\frac{\de}{\theta^2}.
\ee
Then we have the following inequalities
\be\label{IL1}
2^{-s-1}\ge\al_{s+1}\ge \frac{\de}{4\theta^2},
\ee
\be\label{IL2}
\prod_{j=0}^t ((1 + \varkappa(\de/\al_j)^{1/2}) \le \exp(c_12^{-(s-t)/2}\varkappa\theta)
\ee
and
\be\label{IL3}
\prod_{j=0}^t ((1 - \varkappa(\de/\al_j)^{1/2}) \ge \exp(-c_22^{-(s-t)/2}\varkappa\theta)
\ee
for every $0<\varkappa<{\frac{1}{2}}\theta^{-1}$, every $t\in\{0, 1, \ldots, s\}$
and  some positive absolute constants $c_1$ and $c_2$.
\end{Lemma}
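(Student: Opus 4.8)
The plan is to track the recursion $\al_{j+1}=\tfrac12\al_j(1-(\de/\al_j)^{1/2})$ by comparing it with the purely geometric recursion $\al_{j+1}\approx\tfrac12\al_j$, and to control the accumulated error from the correction factors $(1-(\de/\al_j)^{1/2})$ by using that, as long as $\al_j\ge\de/\theta^2$, one has $(\de/\al_j)^{1/2}\le\theta\le 1/2$, so the correction factors lie in $[1/2,1]$. First I would establish the key estimate that for $j\in\{0,1,\ldots,s\}$ the values $\al_j$ are sandwiched between a constant multiple of $2^{-j}$ from below and $2^{-j}$ from above; the upper bound $\al_j\le 2^{-j}$ is immediate by induction since each step multiplies by at most $\tfrac12$. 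For the lower bound I would write $\al_j=2^{-j}\prod_{i=0}^{j-1}(1-(\de/\al_i)^{1/2})$ and bound the product below: since $\al_i\ge\de/\theta^2$ for $i\le s$, we get $(\de/\al_i)^{1/2}\le\theta\cdot\sqrt{\al_{s}/\al_i}\le\theta\cdot 2^{-(s-i)/2}\cdot(\text{const})$ using the crude two-sided bounds bootstrapped from the upper bound and a first-pass lower bound; summing the geometric series $\sum_i 2^{-(s-i)/2}$ gives a bounded exponent, hence the product stays above an absolute positive constant. This simultaneously yields \eqref{IL1}: $\al_{s+1}\le 2^{-s-1}$ is the easy direction, and $\al_{s+1}=\tfrac12\al_s(1-(\de/\al_s)^{1/2})\ge\tfrac12\cdot\tfrac{\de}{\theta^2}\cdot(1-\theta)\ge\tfrac{\de}{4\theta^2}$ since $\theta\le 1/2$ forces $1-\theta\ge 1/2$.

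The crucial quantitative input for \eqref{IL2} and \eqref{IL3} is the geometric decay rate of $(\de/\al_j)^{1/2}$ \emph{as $j$ runs backward from $t\le s$}. From the two-sided bound $\al_j\asymp 2^{-j}$ on $\{0,\ldots,s\}$ together with $\al_s\ge\de/\theta^2$ one obtains $\de/\al_j\le C\,2^{j-s}\cdot\de/\al_s\le C\,2^{j-s}\theta^2$, hence $(\de/\al_j)^{1/2}\le C^{1/2}\,2^{(j-s)/2}\theta$. Therefore
\[
\sum_{j=0}^{t}(\de/\al_j)^{1/2}\le C^{1/2}\theta\sum_{j=0}^{t}2^{(j-s)/2}=C^{1/2}\theta\cdot 2^{(t-s)/2}\sum_{i\ge 0}2^{-i/2}\le c\,\theta\,2^{(t-s)/2}.
\]
Multiplying by $\varkappa$ and using $1+x\le e^x$ gives $\prod_{j=0}^t(1+\varkappa(\de/\al_j)^{1/2})\le\exp\!\big(\varkappa\sum_j(\de/\al_j)^{1/2}\big)\le\exp(c_1\,2^{-(s-t)/2}\varkappa\theta)$, which is \eqref{IL2}. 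For \eqref{IL3} I would use the elementary inequality $1-x\ge e^{-c' x}$ valid for $x$ in a fixed interval $[0,x_0]$ with $x_0<1$: here $\varkappa(\de/\al_j)^{1/2}<\tfrac12\theta^{-1}\cdot\theta=\tfrac12<1$ by the hypothesis on $\varkappa$ and the bound $(\de/\al_j)^{1/2}\le\theta$, so each factor is safely bounded away from $0$; taking logs and summing the same geometric series yields $\prod_{j=0}^t(1-\varkappa(\de/\al_j)^{1/2})\ge\exp(-c_2\,2^{-(s-t)/2}\varkappa\theta)$.

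I expect the main obstacle to be the clean two-sided control $\al_j\asymp 2^{-j}$ with \emph{absolute} constants over the entire range $0\le j\le s$: the lower bound requires a small bootstrapping argument, since to show $\prod_{i<j}(1-(\de/\al_i)^{1/2})$ stays bounded below one already needs a lower bound on the $\al_i$'s. The resolution is to run the estimate in two passes — first use only $\al_i\ge\de/\theta^2$ (valid by the definition of $s$) to get $(\de/\al_i)^{1/2}\le\theta$ and thus the crude bound that the product over all $i\le s$ exceeds $2^{-s}\ge$ (positive constant times) $\al_s/\de\cdot\theta^2$... more cleanly, just observe the product of factors each $\ge 1/2$ is $\ge 2^{-s}$, giving $\al_j\ge 2^{-2j}$; feed this back to sharpen $(\de/\al_i)^{1/2}\le\theta\,2^{(i-s)}$ (a cruder rate, still summable), and conclude the product is $\ge$ an absolute constant, hence $\al_j\ge c\,2^{-j}$. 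Once this is in place, \eqref{IL1}, \eqref{IL2}, \eqref{IL3} all follow from summing geometric series as above, and the argument is routine.
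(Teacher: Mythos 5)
Your handling of \eqref{IL1} is correct (and in fact a bit more direct than the paper, which invokes monotonicity of $x\mapsto x-(\de x)^{1/2}$: you simply bound the two factors $\al_s\ge\de/\theta^2$ and $1-(\de/\al_s)^{1/2}\ge1-\theta\ge1/2$ separately). The gap is in the central quantitative step, the decay estimate $(\de/\al_j)^{1/2}\lesssim\theta\,2^{-(s-j)/2}$ for $j\le s$. You route it through the claim $\al_j\asymp2^{-j}$ with absolute constants, proved by the bootstrap in your last paragraph, and that bootstrap does not close as written. From the crude first pass $\al_i\ge2^{-2i}$ together with $\de\le\theta^2\al_s\le\theta^2 2^{-s}$ one gets only $(\de/\al_i)^{1/2}\le\theta\,2^{\,i-s/2}$, not the claimed $\theta\,2^{\,i-s}$; and $\sum_{i\le s}\theta\,2^{\,i-s/2}\asymp\theta\,2^{s/2}$ is unbounded, so on this pass the product $\prod_{i<j}(1-(\de/\al_i)^{1/2})$ is only bounded below by $\exp(-C\theta 2^{s/2})$, which is useless for $j$ near $s$. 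So the circularity you yourself flag (needing a lower bound on the $\al_i$ to prove a lower bound on the $\al_j$) is not actually resolved by the proposed two-pass scheme.

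The repair is one line, and it is exactly what the paper does: iterate the trivial inequality $\al_{j+1}\le\al_j/2$ \emph{backward} from the anchor in \eqref{IL0}, giving $\al_j\ge2^{s-j}\al_s\ge2^{s-j}\de/\theta^2$ for $j=0,\dots,s$; equivalently, in your notation, $\sqrt{\al_s/\al_i}\le2^{-(s-i)/2}$ holds with constant $1$ and needs no comparison of $\al_i$ with $2^{-i}$ at all. With that in hand, the rest of your argument for \eqref{IL2} and \eqref{IL3} — summing the geometric series, $1+x\le e^x$, and $1-x\ge e^{-2x}$ for $x\in[0,1/2]$, the last applicable since $\varkappa(\de/\al_j)^{1/2}\le\varkappa\theta<1/2$ — goes through verbatim and coincides with the paper's proof. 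The two-sided bound $\al_j\ge c\,2^{-j}$ you wanted as an input is then an easy consequence of the decay estimate, not an ingredient of it.
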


\begin{proof}Consider the function $g(x):= x-(\de x)^{1/2}$, $x\in (0,1)$. Then $g'(x) = 1- (\de/x)^{1/2}/2 >0$ for $x>\de/4$. Therefore, using our assumption
	$\al_s\ge \frac{\de}{\theta^2}\ge4\delta>\de/4$ we obtain
	$$
	\al_{s+1} = \frac{1}{2}\al_s(1 -(\de/\al_s)^{1/2})=\frac{1}{2}g(\al_s)\ge
	\frac{1}{2}g(\tfrac{\de}{\theta^2})
	$$
	$$
	=
	\frac{1}{2}\Bigl(\frac{\de}{\theta^2} - \frac{\de}{\theta}\Bigr)
	=\frac{1}{2}\cdot\frac{\de}{\theta^2}(1 - \theta) \ge \frac{\de}{4\theta^2},
	$$
	which proves (\ref{IL1}).
	
	Using the trivial inequality $\al_{j+1}\le\al_j/2$, we obtain from (\ref{IL0}) that
	\be\label{IL4}
	\al_j \ge \frac{\de}{\theta^2} 2^{s-j},\qquad j=0,\dots,s.
	\ee
	This implies for $j=0,\dots,s$
	\be\label{IL5}
	1 + \varkappa(\de/\al_j)^{1/2} \le 1+\varkappa\theta2^{-(s-j)/2}
	\le \exp(\varkappa\theta2^{-(s-j)/2}).
	\ee
	Next,
	$$
	\sum_{j=0}^t 2^{-(s-j)/2}= \sum_{k=s-t}^s 2^{-k/2}
	\le \sum_{k=s-t}^\infty 2^{-k/2}
	$$
	\be\label{IL6}
	= \frac{\sqrt{2}}{\sqrt{2} - 1}\cdot2^{-(s-t)/2}
	=c_1\cdot2^{-(s-t)/2},\quad c_1:=  \frac{\sqrt{2}}{\sqrt{2} - 1},
	\ee
	and
	$$
	\prod_{j=0}^t ((1 + \varkappa(\de/\al_j)^{1/2}) \le \exp(c_12^{-(s-t)/2}\varkappa\theta),
	$$
	which proves (\ref{IL2}).
	
Since $1-x = (1+\frac{x}{1-x})^{-1}\ge \exp(-\frac{x}{1-x})\ge \exp(-2x)$
for every $x\in(0, 1/2]$
then
using (\ref{IL4}), we obtain for $0\le j\le s$
$$
1 - \varkappa(\de/\al_j)^{1/2} \ge 1- \varkappa\theta2^{-(s-j)/2} \ge
\exp(-\varkappa\theta 2^{-(s-2-j)/2}).
$$
Therefore,
$$
\prod_{j=0}^t ((1 - \varkappa(\de/\al_j)^{1/2}) \ge
\exp(-c_22^{-(s-t)/2}\varkappa\theta),
$$
which proves (\ref{IL3}).
\end{proof}

\begin{Lemma}\label{ILL3}
	Let $\{\al_j\}_{j=0}^{s+1}$ be from Lemma \ref{ILL2}, and let  $0<\varkappa<{\frac{1}{2}}\theta^{-1}$.
	Consider the sequences
	$$
	a_0=b_0=1,\quad a_{j+1} = \frac{1}{2}a_j(1 - \varkappa(\de/\al_j)^{1/2}),
	\quad
	b_{j+1} = \frac{1}{2}b_j(1 + \varkappa(\de/\al_j)^{1/2}),
	$$
	$ j=0,\dots,s$.
	Then
	\be\label{IL7}
	b_{t+1}\le \exp(c_32^{-(s-t)/2}\varkappa\theta)a_{t+1}
	\ee
	for every $t\in\{0, 1, \ldots, s\}$
	for some positive absolute constant $c_3$.
	In particular, for $\varkappa=1$ and for the sequence
	$$
	\bt_0 =1,\quad \bt_{j+1} = \frac{1}{2}\bt_j(1 +(\de/\al_j)^{1/2}),\quad j=0,\dots,s
	$$
	one has
	\be
	\bt_{t+1}\le \exp(c_32^{-(s-t)/2}\theta)\al_{t+1}
	\ee
	for every $t\in\{0, 1, \ldots, s\}$
	for some positive absolute constant $c_3$.
\end{Lemma}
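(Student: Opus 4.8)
The plan is to derive (\ref{IL7}) directly from the two product estimates (\ref{IL2}) and (\ref{IL3}) of Lemma~\ref{ILL2}, so that essentially no new analysis is required.

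First I would unwind the recursions. Since $a_0=b_0=1$ and each step multiplies by $\f12\bigl(1\mp\varkappa(\de/\al_j)^{1/2}\bigr)$, a straightforward induction on $t$ gives the closed forms
$$
a_{t+1}=2^{-(t+1)}\prod_{j=0}^{t}\bigl(1-\varkappa(\de/\al_j)^{1/2}\bigr),
\qquad
b_{t+1}=2^{-(t+1)}\prod_{j=0}^{t}\bigl(1+\varkappa(\de/\al_j)^{1/2}\bigr).
$$
Before dividing I would check positivity of the factors in the first product: the sequence $\{\al_j\}$ is decreasing and $\al_s\ge\de/\theta^2$ (equivalently, one may invoke (\ref{IL4})), so for $0\le j\le s$ one has $(\de/\al_j)^{1/2}\le\theta$ and hence $\varkappa(\de/\al_j)^{1/2}\le\varkappa\theta<\tfrac12$ by the hypothesis $\varkappa<\tfrac12\theta^{-1}$. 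In particular $a_{t+1}>0$ and the quotient $b_{t+1}/a_{t+1}$ is well defined.

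Next I would form the ratio
$$
\f{b_{t+1}}{a_{t+1}}=\f{\prod_{j=0}^{t}\bigl(1+\varkappa(\de/\al_j)^{1/2}\bigr)}{\prod_{j=0}^{t}\bigl(1-\varkappa(\de/\al_j)^{1/2}\bigr)},
$$
bound the numerator above by $\exp\bigl(c_12^{-(s-t)/2}\varkappa\theta\bigr)$ using (\ref{IL2}), and bound the denominator below by $\exp\bigl(-c_22^{-(s-t)/2}\varkappa\theta\bigr)$ using (\ref{IL3}). Multiplying these yields
$$
b_{t+1}\le\exp\bigl((c_1+c_2)2^{-(s-t)/2}\varkappa\theta\bigr)\,a_{t+1},
$$
which is exactly (\ref{IL7}) with $c_3:=c_1+c_2$. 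For the ``in particular'' claim I would simply set $\varkappa=1$: then the recursion defining $\{a_j\}$ coincides with that defining $\{\al_j\}$ (same initial value $1$), so $a_j=\al_j$, and likewise $b_j=\bt_j$; substituting into (\ref{IL7}) gives $\bt_{t+1}\le\exp\bigl(c_32^{-(s-t)/2}\theta\bigr)\al_{t+1}$. (At $\theta=1/2$ the value $\varkappa=1$ is borderline for the stated hypothesis of Lemma~\ref{ILL2}, but the estimates (\ref{IL2}), (\ref{IL3}) there only use $x\le 1/2$ in the elementary bound $1-x\ge e^{-2x}$, so they still apply.)

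There is no substantial obstacle: all the genuine work sits in Lemma~\ref{ILL2}. The only points deserving a moment's care are the elementary induction producing the closed product forms and the verification that $1-\varkappa(\de/\al_j)^{1/2}$ remains positive, which is needed both for those closed forms to be meaningful and for the division step, and which is precisely where the restriction $\varkappa<\tfrac12\theta^{-1}$ is used.
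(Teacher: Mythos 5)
Your proof is correct and follows essentially the same route as the paper: unwind the recursions into closed product forms, bound $b_{t+1}$ above via (\ref{IL2}) and $a_{t+1}$ below via (\ref{IL3}), and combine to get $c_3=c_1+c_2$, with the $\varkappa=1$ specialization giving the statement for $\bt_{t+1}$ and $\al_{t+1}$. Your added checks (positivity of the factors and the borderline case $\varkappa\theta=1/2$) are harmless refinements of the same argument.
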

\begin{proof} By the definition of $b_j$ we obtain from (\ref{IL2})
	$$
	b_{t+1} = 2^{-t}\prod_{j=0}^t ((1 + \varkappa (\de/\al_j)^{1/2})
	\le 2^{-t}\exp(c_12^{-(s-t)/2}\varkappa\theta).
	$$
	Further, using the definition of the $a_j$ and (\ref{IL3}) we get
	\be\label{IL7'}
	a_{t+1} = 2^{-t}\prod_{j=0}^t ((1 -  \varkappa(\de/\al_j)^{1/2})
	\ge 2^{-t}\exp(-c_22^{-(s-t)/2} \varkappa\theta).
	\ee
	Combining the above two inequalities, we complete the proof of (\ref{IL7}).
\end{proof}

Finally, we prove one simple inequality for a recurrent sequence.
\begin{Lemma}\label{ILL4} Let the sequence $\{m_j\}_{j=0}^\infty$ of positive numbers satisfy the following conditions:
	$$
	m_0 = M,\qquad (m_j-(m_j)^{1/2})/2 \le m_{j+1} \le m_j/2,\  \ j=0,1,\cdots
	$$
	where $M>0$ is a constant.
	Then for every integer $k\ge 0$ we have
	$$
	M-2^{k/2}M^{1/2}(\sqrt{2}-1)^{-1} \le 2^km_k \le M.
	$$
\end{Lemma}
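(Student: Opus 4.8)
The statement is a clean two-sided bound on $2^k m_k$ for a recurrence that contracts by roughly a factor of two at each step, with a controlled square-root defect. The right-hand inequality $2^k m_k \le M$ is immediate: from $m_{j+1}\le m_j/2$ we get $m_k \le 2^{-k} m_0 = 2^{-k} M$ by induction on $k$, hence $2^k m_k \le M$. So the real content is the lower bound $2^k m_k \ge M - 2^{k/2} M^{1/2}(\sqrt 2 - 1)^{-1}$.

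**Main idea for the lower bound.** I would track the quantity $2^j m_j$ rather than $m_j$ itself, since the factor-of-two contraction is exactly cancelled. Multiplying the lower recurrence bound $m_{j+1}\ge (m_j - m_j^{1/2})/2$ by $2^{j+1}$ gives
$$
2^{j+1} m_{j+1} \ge 2^j m_j - 2^j m_j^{1/2}.
$$
Now I need to control the defect term $2^j m_j^{1/2}$ by something summable. Using the already-established upper bound $m_j \le 2^{-j} M$, we have $m_j^{1/2} \le 2^{-j/2} M^{1/2}$, so $2^j m_j^{1/2} \le 2^{j/2} M^{1/2}$. Telescoping from $j=0$ to $j=k-1$:
$$
2^k m_k \ge 2^0 m_0 - M^{1/2}\sum_{j=0}^{k-1} 2^{j/2} = M - M^{1/2}\cdot \frac{2^{k/2}-1}{2^{1/2}-1} \ge M - \frac{2^{k/2}}{\sqrt 2 - 1} M^{1/2},
$$
which is exactly the claimed inequality. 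The geometric sum $\sum_{j=0}^{k-1} 2^{j/2} = (2^{k/2}-1)/(\sqrt 2 - 1) < 2^{k/2}/(\sqrt 2 - 1)$ supplies the constant $(\sqrt 2 - 1)^{-1}$.

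**Anticipated obstacle.** There is essentially no serious obstacle here — this is a routine telescoping argument. The only point requiring a moment's care is the logical order: one must first prove the easy upper bound $m_k \le 2^{-k}M$ (and hence $m_j^{1/2}\le 2^{-j/2}M^{1/2}$ for all intermediate $j$), and only then feed that into the telescoped lower estimate, so that the defect terms are genuinely bounded by a convergent geometric series. One should also note the bound is only informative when $2^{k/2}M^{1/2}(\sqrt 2-1)^{-1} < M$, i.e. for $k$ not too large relative to $\log M$; for larger $k$ the lower bound is trivially true since $m_k > 0$. No monotonicity or positivity beyond the stated hypotheses is needed, and no appeal to the earlier iteration lemmas is required — Lemma~\ref{ILL4} is a self-contained technical estimate.
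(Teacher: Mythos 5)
Your proposal is correct and follows essentially the same route as the paper: the trivial upper bound $m_k\le 2^{-k}M$ is established first, then the lower recurrence is rescaled by $2^{j+1}$, the defect $2^jm_j^{1/2}$ is bounded by $2^{j/2}M^{1/2}$ via that upper bound, and the resulting geometric series gives the constant $(\sqrt{2}-1)^{-1}$ after telescoping. No discrepancies to report.
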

\begin{proof} The right inequality is obvious. For the left inequality we have
	$$
	2m_k \ge m_{k-1}-(m_{k-1})^{1/2} \ge m_{k-1}-(2^{-k+1}M)^{1/2},
	$$
	which implies  $$2^k m_k\ge 2^{k-1} m_{k-1} -2^{(k-1)/2} \sqrt{M}\   \ \text{ for $k=1,2,\cdots$,}$$
	 and hence
	$$
	2^km_k \ge M- M^{1/2}2^{k/2} \sum_{j=1}^k 2^{-j/2} \ge M-2^{k/2}M^{1/2}(\sqrt{2}-1)^{-1}.
	$$
The lemma is proved.
\end{proof}

\section{The case $p=1\colon$ proof of Theorem \ref{MT1}}
\label{p1}

In this section we prove the discretization theorem of the $L_1$ norm for the finite dimensional subspaces satisfying the Nikol'skii inequality between $L_\infty$ and $L_2$ norms.  We now proceed to the detailed argument.

\begin{proof}[Proof of Theorem \ref{MT1}]

Let $\epsilon\in(0, 1/4)$
be a fixed number and let $\epsilon_0=\varkappa_1\epsilon$
and $\theta=\varkappa_2\epsilon$, where $\varkappa_1, \varkappa_2\in(0, 1)$ are positive absolute constants, which
will be specified later.

{\bf Step 1. Preliminary discretization.} We need the following lemma, which can be deduced by combining certain estimates from
\cite{DPSTT1, DPSTT2, DT}.

\begin{Lemma}\label{p1L1} Let   $1\leq p<2$ and $ 0< \epsilon_0 < 1/4$. Let $X_n$ be a subspace of $L_\infty(\Og)$ of dimension at most $n$  satisfying
	\begin{equation}\label{4-1-c}
	\|f\|_\infty \leq \sqrt{K n} \|f\|_{L_2(\Og)},\   \ \forall f\in X_n
	\end{equation}
	for some constant $K\ge 1$.    Then  there exists a finite set of points
	$x_1,\cdots, x_m\in  \Og$   with
$	m\leq C_p\epsilon_0^{-8} K n (\log (Kn))^{3}$
such that for  any $f\in X_n$,  we have
	\begin{align}
	(1-\epsilon_0) \|f\|_{L_p(\Og)}^p \leq \f 1m \sum_{j=1}^m |f(x_j)|^p  \leq (1+\epsilon_0) \|f\|_{L_p(\Og)}^p,\label{3-2a}
	\end{align}
	and
		\begin{align}
	(1-\epsilon_0) \|f\|_{L_2(\Og)}^2 \leq \f 1m \sum_{j=1}^m |f(x_j)|^2  \leq (1+\epsilon_0) \|f\|_{L_2(\Og)}^2.\label{3-3a}
	\end{align}
\end{Lemma}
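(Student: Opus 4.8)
The plan is to produce \emph{both} inequalities (\ref{3-2a}) and (\ref{3-3a}) from a \emph{single} draw of i.i.d.\ points $\xi^1,\dots,\xi^m$ from $\mu$, establishing each on an event of probability $\ge 3/4$ and then intersecting the two events. Since both (\ref{3-2a}) and (\ref{3-3a}) are homogeneous in $f$, it is enough to verify them for $f\in X_n$ with $\|f\|_{L_2(\Og)}=1$. For such $f$, (\ref{4-1-c}) yields the pointwise bound $\|f\|_\infty\le\sqrt{Kn}$, and, combined with $1=\int_\Og|f|^2\,d\mu\le\|f\|_\infty^{2-p}\int_\Og|f|^p\,d\mu$, also the lower bound $\|f\|_{L_p(\Og)}^p\ge (Kn)^{-(2-p)/2}$. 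These are the only two consequences of the Nikol'skii inequality the argument uses; together they say that, after normalizing by $\|f\|_p^p$, the variables $|f(\xi^j)|^p/\|f\|_p^p$ have mean $1$, are bounded by $Kn$, and have variance at most $Kn$.

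For (\ref{3-3a}) I would run the standard argument behind $L_2$ sampling discretization. Pick an orthonormal basis $u_1,\dots,u_n$ of $X_n$ in $L_2(\Og,\mu)$ and set $\Phi(x)=(u_1(x),\dots,u_n(x))$; then $\bE_\mu[\Phi\Phi^{\top}]=I_n$, while (\ref{4-1-c}) is exactly $\|\Phi(x)\|_2^2=\sum_i|u_i(x)|^2\le Kn$ for $\mu$-a.e.\ $x$. Rudelson's lemma on sums of i.i.d.\ rank-one operators — equivalently, the matrix Bernstein inequality, as used for the case $p=2$ in \cite{DPSTT1, DT} — gives
\[
\bE\,\Bigl\|\f1m\sum_{j=1}^m\Phi(\xi^j)\Phi(\xi^j)^{\top}-I_n\Bigr\|\le C\Bigl(\sqrt{\tfrac{Kn\log(Kn)}{m}}+\tfrac{Kn\log(Kn)}{m}\Bigr),
\]
so for $m\ge C\epsilon_0^{-2}Kn\log(Kn)$ the right-hand side is $\le\epsilon_0/2$ and (\ref{3-3a}) holds with probability $\ge 3/4$ by Markov's inequality. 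Using more i.i.d.\ points only improves this, so the same conclusion holds for every larger $m$ as well.

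The substantive half is (\ref{3-2a}), and this is where the stated bound $m\le C_p\epsilon_0^{-8}Kn(\log(Kn))^3$ comes from. I would obtain it from the sampling discretization theorem for $L_p$ norms, $1\le p<2$, under the Nikol'skii inequality for the pair $(2,\infty)$, in the form proved in \cite{DPSTT2} (keeping track, in that proof, of the dependence of the implicit constant on $K$ and $\epsilon_0$): for $m\ge C_p\epsilon_0^{-8}Kn(\log(Kn))^3$ an i.i.d.\ sample from $\mu$ satisfies (\ref{3-2a}) with probability $\ge 3/4$. At bottom this is a chaining bound for the empirical process $f\mapsto\f1m\sum_{j=1}^m|f(\xi^j)|^p/\|f\|_p^p-1$ over $\{f\in X_n:\|f\|_2=1\}$. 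The point — and the hard part — is that a plain Dudley/entropy integral is far too lossy here: it would treat the index class $\{|f|^p/\|f\|_p^p:\|f\|_2=1\}$ as an $n$-dimensional subset of $L_2(\mu)$ of radius $\sim\sqrt{Kn}$ and hence force $m\sim n^2$, whereas the large-radius part of this class is in fact essentially low-dimensional. Capturing this requires feeding the variance estimate $\bE_\mu|f|^{2p}\le\|f\|_\infty^{p}\|f\|_p^{p}$ and the entropy bound $\log N\bigl(\{f\in X_n:\|f\|_p\le1\},\|\cdot\|_\infty,t\bigr)\le n\log(3Kn/t)$ (from $\dim X_n\le n$ and $\|f\|_\infty\le(Kn)^{1/p}$ on that set) into Talagrand's majorizing-measure / generic-chaining machinery in place of the crude entropy integral; this is precisely the delicate ingredient of \cite{DPSTT2}, which I would either cite outright or reassemble from the estimates of \cite{DPSTT1, DPSTT2, DT}.

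It remains to combine the two halves. Put $m:=\bigl\lceil C_p\epsilon_0^{-8}Kn(\log(Kn))^3\bigr\rceil$; since $K\ge 1$ this exceeds the threshold $C\epsilon_0^{-2}Kn\log(Kn)$ from the $L_2$ step, so one draw of $m$ i.i.d.\ points from $\mu$ satisfies both (\ref{3-2a}) and (\ref{3-3a}) with probability at least $\tfrac34+\tfrac34-1=\tfrac12$ by the union bound; in particular a point set with the required properties exists. The genuine obstacle is the $L_p$ estimate of the previous paragraph — bringing the number of points down to order $Kn$ (up to logarithms) rather than order $n^2$ is exactly the place where the sharp, Talagrand-type chaining cannot be avoided — and this is also the reason the lemma is only a \emph{preliminary} discretization, to be sharpened later by the iteration arguments of Sections \ref{p1} and \ref{p}.
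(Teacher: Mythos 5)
Your outer architecture (one i.i.d.\ draw from $\mu$, the $L_2$ estimate \eqref{3-3a} via matrix concentration, the $L_p$ estimate \eqref{3-2a} from a chaining bound, then a union bound to get both simultaneously) is sound, and your reduction to $\|f\|_2=1$, the bound $\|f\|_p^p\ge (Kn)^{-(2-p)/2}$, and the variance estimate are all correct. The gap is in the step you lean on hardest: you dispose of \eqref{3-2a} by citing the $m\gtrsim n(\log n)^3$ theorem of \cite{DPSTT2} ``keeping track of the dependence of the implicit constant on $K$ and $\epsilon_0$,'' in a random-points, probability-$\ge 3/4$ form. But the explicit dependence $C_p\epsilon_0^{-8}K(\log(Kn))^3$, for a single sample valid simultaneously with the $L_2$ bound, is precisely what Lemma \ref{p1L1} asserts and what is not clearly stated in the literature --- this is the reason the paper proves it from scratch. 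Moreover, the ingredient list you give for reassembling the argument does not suffice. The entropy bound you propose, $\log N(\{f\in X_n:\|f\|_p\le1\},\|\cdot\|_\infty,t)\le n\log(3Kn/t)$, is the volumetric one, and (as you yourself note) any chaining based on it forces $m\sim n^2$; no appeal to ``majorizing measures'' with that input recovers $n(\log n)^3$. The bound that actually does the work is \eqref{4-2-3b} of Lemma \ref{prL2}, of order $K_1K_2^2\, n\log n/t^p$, and it requires the \emph{additional} hypothesis \eqref{2-2}, namely $\|f\|_\infty\le K_2\|f\|_{q_n}$ with $q_n=\log n$, which does \emph{not} follow from the $(2,\infty)$ Nikol'skii assumption \eqref{4-1-c}. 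The paper's proof manufactures this hypothesis by a preliminary two-stage reduction: first an i.i.d.\ discretization with $m_1\asymp\epsilon_0^{-2}\log(1/\epsilon_0)\,Kn^2\log n$ points via Lemma \ref{prL3} from \cite{DPSTT1} (simultaneously for $L_p$ and $L_2$), which transfers the problem to a discrete space of $m_1$ atoms where \eqref{2-2} holds automatically with $K_2\le C\epsilon_0^{-1/4}$; only then are Lemma \ref{prL2} and the conditional chaining theorem (Lemma \ref{prL1} with Remark \ref{prR1}) applied to extract the final subset of size $\le C_p\epsilon_0^{-8}Kn(\log(Kn))^3$, with simultaneity for $q=p$ and $q=2$ obtained by the probability-$3/4$ union bound at that discrete stage. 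This intermediate reduction is entirely absent from your plan, and without it (or an equivalent device) the route you sketch does not reach the stated bound.

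For what it is worth, your ``single i.i.d.\ draw plus Markov'' plan can be carried out cleanly when $1<p<2$: combine Rudelson's expectation bound (Theorem \ref{T-Rud}) with Kosov's expectation bound for the $L_p$ empirical process under the $(2,\infty)$ Nikol'skii condition (Theorem \ref{T-Kos} from \cite{Kos}), exactly as the paper remarks after its proof. That is the correct citation for your second step --- not \cite{DPSTT2} --- but it does not cover $p=1$, which is the case actually needed for Theorem \ref{MT1}; for $p=1$ the two-stage argument of Section \ref{proofL1} (or some substitute for condition \eqref{2-2}) is unavoidable as far as the tools you invoke are concerned.
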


   For the reader's convenience, we present  a detailed
	proof of Lemma \ref{p1L1}   in Section \ref{proofL1}.

By Lemma \ref{p1L1},  we can find
a finite set
$\Ld_M:=\{x_1,\cdots, x_M\}\subset \Og$
such that both \eqref{3-2a} and \eqref{3-3a} hold with $m=M$  for  all  $f\in X_n$.
Then  \eqref{4-1-c} implies
\begin{equation*}\label{2-3b}
\sup_{f\in X_n} \f {\|f\|_\infty} {\|f\|_{2,\Ld_M} } \leq \f {\sqrt{Kn}} {\sqrt{1-\epsilon_0}}
\leq \sqrt{2 Kn},
\end{equation*}
where  $\|f\|_{2,\Ld_M} :=(\f1M\sum_{i=1}^M |f(x_i)|^2)^{1/2}$. 	
Thus, without loss of generality, we may assume that $\Og=\Og_M=\{1,2,\cdots, M\}$
and $\mu$ is the probability measure on $\Og_M$ given by $\mu\{j\}=M^{-1}$ for $1\leq j\leq M$.

 {\bf Step 2.} We  identify  each vector in $\RR^M$ with a function on the set  $\Og_M:=\{1,\dots, M\}$.
For $I\subset \Og_M$ and $f\in \RR^M$,  we define
\[
\|f\|_{p, I} =\Bl( \f 1 {|I|} \sum_{j\in I} |f(j)|^p \Br) ^{\f1p},\   \ 1\leq p<\infty,\    \ \text{and}\   \ \|f\|_{\infty, I} :=\max_{j\in I} |f(j)|,
\]
where $|I|$ denotes the cardinality of the set $I$.
Let $\|\cdot\|_p$ denote the usual norm of $\ell_p^M$; that is,
$\|f\|_p:=M^{1/p} \|f\|_{p, \Og_M}.$ Let $B_p^M:=\{f\in \RR^M:\  \|f\|_p\leq 1\}$.
For each $I\subset \Og_M$, we denote by $R_I$ the orthogonal projection onto the space spanned by $e_i, i\in I$, where $e_1=(1,0,\cdots, 0)$, $\cdots$, $e_M=(0, \cdots, 0, 1)$ is a canonical basis of $\RR^M$. Thus,  for each $f\in \RR^M$, $(R_I f)(j)=f(j)$ for $j\in I$, and $(R_I f)(j)=0$ for $j\in\Og_M\setminus I$.
Throughout this note,   $\{ \va_i:\  \  i=1,2,\cdots\}$ denotes
a sequence  of independent Bernoulli random variables taking
values $\pm 1$ with probability $1/2$.

\begin{Theorem}\label{p1T2} Let $X_n$ be a subspace of $\RR^M$ of dimension at most $n$ satisfying
	\begin{equation}\label{2-1}
	\|f\|_{\infty}\le \sqrt{Kn} \|f\|_{2,\Og_M} = \sqrt{\f {Kn}M} \|f\|_{2},\     \  \   \ \forall f\in X_n.
	\end{equation}
	Then for $p=1$ and $p= 2$ we have
\be\label{p12}
	\EE \Bl( \sup_{f\in X_n\cap B_p^M} \Bl| \sum_{i=1}^M { \va_i}  |f(i)|^p\Br| \Br) \leq C_4
	\sqrt{\f {Kn\log n}M},
\ee
where $C_4$ is a positive absolute constant.
\end{Theorem}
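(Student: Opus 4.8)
The plan is to bound the symmetrized empirical process $\EE\sup_{f\in X_n\cap B_p^M}\bigl|\sum_{i=1}^M\va_i|f(i)|^p\bigr|$ by a chaining / Dudley-type argument, reducing everything to a covering number estimate for the set of ``coordinate power'' vectors $\{(|f(i)|^p)_{i=1}^M : f\in X_n\cap B_p^M\}$ in a suitable metric, and then exploiting the Nikol'skii inequality \eqref{2-1} together with the fact that $\dim X_n\le n$. The two cases $p=2$ and $p=1$ are handled almost identically, with the Lipschitz contraction principle doing the extra work in the case $p=1$.

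First I would treat $p=2$. Here $\sum_i\va_i|f(i)|^2$ is a Rademacher chaos-type sum over the convex symmetric set $\cK:=\{(f(i)^2)_i : f\in X_n\cap B_2^M\}$. Since the functional $f\mapsto\sum_i\va_i f(i)^2$ is a quadratic form, the supremum over $X_n\cap B_2^M$ equals the operator norm (restricted to $X_n$) of the random diagonal matrix $D_\va=\mathrm{diag}(\va_i)$ conjugated by the coordinate embedding. Writing $u_i\in X_n$ for the ``reproducing'' vectors so that $f(i)=\langle f,u_i\rangle$, one has $\|u_i\|_2^2\le Kn/M$ uniformly by \eqref{2-1} (this is the standard consequence: $\sup_i\|u_i\|_2^2=\sup_i\sup_{\|f\|_2\le1}|f(i)|^2/\|f\|_2^2\cdot(\text{normalization})$, giving the bound $Kn/M$ after accounting for $\|f\|_{2,\Og_M}^2=\|f\|_2^2/M$). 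Then $\EE\sup_{f\in X_n\cap B_2^M}|\sum_i\va_i f(i)^2| = \EE\|\sum_i\va_i (u_i\otimes u_i)\|_{X_n\to X_n}$, and the noncommutative Khintchine / Rudelson inequality bounds this by $C\sqrt{\log n}\cdot\bigl\|\sum_i u_i\otimes u_i\bigr\|^{1/2}\cdot\max_i\|u_i\|_2$. Now $\sum_i u_i\otimes u_i$ is exactly the identity on $X_n$ (since $\sum_i\langle f,u_i\rangle\langle g,u_i\rangle = M\langle f,g\rangle_{L_2(\mu)} = \langle f,g\rangle_2$ after the normalization is chased through, so its operator norm is $1$), and $\max_i\|u_i\|_2\le\sqrt{Kn/M}$; hence the bound is $C\sqrt{Kn\log n/M}$, which is \eqref{p12} for $p=2$.

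For $p=1$ I would first symmetrize and then apply the contraction principle: the map $t\mapsto|t|$ is $1$-Lipschitz with $|0|=0$, so by the Ledoux--Talagrand contraction inequality
\[
\EE\sup_{f\in X_n\cap B_1^M}\Bigl|\sum_{i=1}^M\va_i|f(i)|\Bigr|\le 2\,\EE\sup_{f\in X_n\cap B_1^M}\Bigl|\sum_{i=1}^M\va_i f(i)\Bigr|=2\,\EE\sup_{f\in X_n\cap B_1^M}\bigl|\langle f,\textstyle\sum_i\va_i e_i\rangle\bigr|.
\]
The remaining supremum is a linear Rademacher process over the finite-dimensional body $X_n\cap B_1^M$, and one passes from it to the $L_2$ geometry by a Dudley-type entropy integral or, more directly, by noting that on the $n$-dimensional space $X_n$ one can replace $B_1^M$ by a multiple of $B_2^M$ after paying a factor controlled by the Nikol'skii inequality: for $f\in X_n$, $\|f\|_1 = M\|f\|_{1,\Og_M}$ and interpolating with $\|f\|_\infty\le\sqrt{Kn}\,\|f\|_{2,\Og_M}$ gives the comparison needed to reduce to the linear $L_2$ estimate, whose expectation is again of order $\sqrt{Kn\log n/M}$ by the same Rudelson-type argument (or simply by $\EE\|\sum_i\va_i e_i\|_{X_n^*}\lesssim\sqrt{n}\cdot(\text{radius of }X_n\cap B_1^M\text{ in }\ell_2)$ combined with a volumetric $\log n$).

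The main obstacle I anticipate is not any single estimate but rather tracking the normalization between $\|\cdot\|_p$, $\|\cdot\|_{p,\Og_M}$, and the $L_2(\mu)$ inner product carefully enough that the factors of $M$ cancel exactly and the final bound comes out as $\sqrt{Kn\log n/M}$ rather than, say, $\sqrt{Kn\log n}$ or $\sqrt{Kn\log M/M}$; in particular the $\log n$ (not $\log M$) is crucial and is exactly what the Rudelson/noncommutative-Khintchine route delivers, since the logarithmic factor there is the logarithm of the rank $\dim X_n\le n$ of the operators $u_i\otimes u_i$, not the ambient dimension $M$. A cleaner alternative for the $p=1$ case that avoids the contraction step is to estimate the entropy numbers of $\{(|f(i)|)_i\}$ directly via the volumetric bound $\log N(X_n\cap B_1^M,\|\cdot\|_2,t)\lesssim n\log(1/t)$ and feed this into Dudley's integral, but this typically loses a power of $\log$, so the contraction approach above is preferable for matching the stated constant $C_4$.
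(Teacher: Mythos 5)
Your treatment of $p=2$ is fine and is essentially the argument the paper invokes by citation: it is Rudelson's lemma, obtained from noncommutative Khintchine applied to $\sum_i\va_i\,u_i\otimes u_i$ with $\max_i\|u_i\|_2\le\sqrt{Kn/M}$ and $\sum_iu_i\otimes u_i=\mathrm{Id}_{X_n}$, the logarithm being that of the rank $n$.

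The $p=1$ case, however, has a genuine gap. The contraction step is harmless, but after it the whole content of the theorem sits in the bound
$\EE\sup_{f\in X_n\cap B_1^M}\bigl|\sum_{i=1}^M\va_i f(i)\bigr|\le C\sqrt{Kn\log n/M}$,
and neither of the two routes you propose for it delivers this. The Nikol'skii inequality gives $\|f\|_2^2\le\|f\|_\infty\|f\|_1\le\sqrt{Kn/M}\,\|f\|_2\|f\|_1$, i.e.\ $X_n\cap B_1^M\subset\sqrt{Kn/M}\,(X_n\cap B_2^M)$; combining this inclusion with $\EE\|P_{X_n}\va\|_2\le\sqrt n$ yields only $\sqrt{Kn/M}\cdot\sqrt n=n\sqrt{K/M}$, which is larger than the target by a factor of order $\sqrt{n/\log n}$. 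The volumetric-entropy/Dudley alternative fares no better: with radius $r=\sqrt{Kn/M}$ and $\log N(X_n\cap B_1^M,t)\lesssim n\log(Cr/t)$ the entropy integral is again of order $r\sqrt n=n\sqrt{K/M}$ (and sharper entropy estimates in $\ell_2$ produce $\log M$ and extra logarithms, not $\log n$). There is also no ``same Rudelson-type argument'' available here: Rudelson's inequality concerns the operator norm of $\sum_i\va_i u_i\otimes u_i$, i.e.\ the quadratic process, and says nothing about the linear process over the $\ell_1$-ball of $X_n$. What actually closes this step in the paper (Appendix I, following Talagrand and Johnson--Schechtman) is a different mechanism: compare the Rademacher process with the Gaussian one, rewrite $\sum_jg_jf(j)$ through an orthonormal basis $\{u_k\}$ of $X_n$ in $L_2(\mu)$, and bound the resulting Gaussian mean width of the $L_1$-ball of $X_n$ by $\sqrt2\,\theta\,K\bigl(X_n,\|\cdot\|_1\bigr)$, where $\theta=\sqrt{Kn}$ enters through the Christoffel-function bound $\bigl\|(\sum_k u_k^2)^{1/2}\bigr\|_\infty\le\theta$ and $K(X_n,\|\cdot\|_1)$ is the $K$-convexity constant, which for an $n$-dimensional subspace of $L_1$ is $\le C\sqrt{\log n}$. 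That $\sqrt{\log n}$ (rather than the generic $\log n$, and rather than a power of $n$) is precisely what produces $\sqrt{Kn\log n/M}$; without this ingredient, or something of comparable depth, the $p=1$ estimate does not follow. So the real obstacle is not the bookkeeping of normalizations you flag, but the absence of the $K$-convexity (or an equivalent Talagrand-type) argument for the linear process over $X_n\cap B_1^M$.
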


Several remarks on Theorem \ref{p1T2} are in order:

\begin{itemize}

\item[\rm (i)]Theorem \ref{p1T2}  for $p=2$    was proved by Rudelson \cite[Lemma 1]{Rud1}.

 \item[\rm (ii)]For $p=1$,  Theorem \ref{p1T2} was essentially proved by   Talagrand \cite{Ta90}, while   not   explicitly  stated  there
     (see also \cite[Proposition 15.16]{LedTal} and \cite[Theorem 13]{JS}).

\end{itemize}

For completeness, we will include a  proof of Theorem \ref{p1T2} for $p=1$ in Appendix I.

The  following lemma, which is a consequence of  Theorem \ref{p1T2},  plays an important role in the proof of Theorem \ref{MT1}.

\begin{Lemma}\label{p1L2} Let  $X_n$ be   a  subspace of $\RR^M$
of dimension at most $n$ satisfying \eqref{2-1}
for some constant $K\ge 1$.  Let   $J\subset \Og_M:=\{1, 2,\cdots, M\}$. Assume that     there exist positive  constants $\al_J$, $\bt_J$    such that  for any $f\in X_n$ we have for both $p=1$ and $p=2$
\be\label{2-4}
	\al_J \| f\|^p_p \leq \|R_J f\|^p_p \leq \bt_J  \| f\|^p_p .
\ee
		Then there exists a subset $I\subset J$ with
	\begin{equation}\label{2-5}
	\f {|J|} 2 \Bl(1-\f 1 {\sqrt{|J|}} \Br) \leq |I|\leq \f {|J|} 2
	\end{equation}
such that for any $f\in X_n$ we have for both $p=1$ and $p=2$
\be\label{3-8a}
	\al_I \| f\|^p_p \leq \|R_I f\|^p_p \leq \bt_I  \| f\|^p_p
\ee
where
\be\label{p13}
\al_I:=  \f {(1-\sa_1)\al_J }2,\   \    \bt_I:=  \f {(1+\sa_1)\bt_J }2,\   \
\sa_1:= C_5\sqrt{ \f { K n\log n} {\al_J M }},
\ee
and  $C_5$ is an absolute constant.
\end{Lemma}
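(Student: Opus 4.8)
The plan is to select the set $I$ by a random thinning of $J$: for each $j \in J$ include $j$ in $I$ with an independent fair coin flip, so that $I$ is obtained via the random signs $\va_i$, $i \in J$. Concretely, write $R_I = \frac12 R_J + \frac12 S$, where $S$ is the "signed" operator $Sf = \sum_{j\in J}\va_j f(j) e_j$; then for any $f$ and any $p$,
\[
\|R_I f\|_p^p = \sum_{j\in J}\frac{1+\va_j}{2}|f(j)|^p = \frac12\|R_J f\|_p^p + \frac12\sum_{j\in J}\va_j|f(j)|^p .
\]
So the deviation of $\|R_I f\|_p^p$ from $\frac12\|R_J f\|_p^p$ is controlled by $\sup_{f\in X_n\cap B_p^M}\big|\sum_{j\in J}\va_j|f(j)|^p\big|$, and the expectation of this supremum is exactly what Theorem \ref{p1T2} estimates — except Theorem \ref{p1T2} is stated for the full index set $\Og_M$. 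To apply it on the subset $J$, I would note that $R_J X_n$ is again a subspace of $\RR^{|J|}$ of dimension at most $n$, and the hypothesis \eqref{2-4} together with \eqref{2-1} gives the Nikol'skii inequality on $J$ with a slightly worse constant: for $f\in X_n$, $\|R_J f\|_\infty \le \|f\|_\infty \le \sqrt{Kn}\,\|f\|_{2,\Og_M}$ while $\|f\|_{2,\Og_M}^2 = \frac{|J|}{M}\cdot\frac{1}{|J|}\|R_J f\|_2^2/\al_J^{?}$ — more carefully, from \eqref{2-4} with $p=2$ we get $\|f\|_2^2 \le \al_J^{-1}\|R_J f\|_2^2$, hence $\|R_J f\|_{\infty} \le \sqrt{Kn/\al_J}\,\|R_J f\|_{2,J}\cdot(|J|/M)^{1/2}$; tracking constants this yields a Nikol'skii inequality for the space $R_J X_n \subset \RR^{|J|}$ with parameter $K' n$ where $K' \lesssim K/\al_J$. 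Applying Theorem \ref{p1T2} on $\RR^{|J|}$ then gives, for $p=1,2$,
\[
\EE\Big(\sup_{f\in X_n\cap B_p^M}\Big|\sum_{j\in J}\va_j |f(j)|^p\Big|\Big) \le C_4\sqrt{\frac{K n\log n}{\al_J M}}\cdot(\text{harmless factor}),
\]
after rescaling $\|f\|_p^p$ to $\|R_J f\|_{p,J}^p$ and back via \eqref{2-4}.

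Next I would combine the two supremum bounds (for $p=1$ and $p=2$) with the elementary concentration of $|I| = \sum_{j\in J}\frac{1+\va_j}{2}$ around $|J|/2$: $\EE\big||I| - |J|/2\big| \le \frac12\sqrt{|J|}$. By the union bound / averaging argument (choosing the medians or using Markov on the sum of the three nonnegative deviations, with the standard trick that the average over sign choices of a sum of nonnegative quantities is at most the sum of the averages), there exists a specific realization of the signs — equivalently a specific set $I\subseteq J$ — for which simultaneously $|I| \le |J|/2$, $|I| \ge \frac{|J|}{2}(1 - 1/\sqrt{|J|})$ (this is where the $\big(1-\tfrac1{\sqrt{|J|}}\big)$ in \eqref{2-5} comes from, possibly after adjusting absolute constants by a factor of 2 or 3 in the deviation), and for both $p=1,2$
\[
\sup_{f\in X_n\cap B_p^M}\Big|\sum_{j\in J}\va_j |f(j)|^p\Big| \le 3C_4\sqrt{\frac{K n\log n}{\al_J M}}.
\]
For such $I$, the displayed identity for $\|R_I f\|_p^p$ gives, for every $f\in X_n$ with $\|f\|_p = 1$,
\[
\Big|\|R_I f\|_p^p - \tfrac12\|R_J f\|_p^p\Big| \le \tfrac32 C_4\sqrt{\frac{K n\log n}{\al_J M}} =: \tfrac12\sa_1\,\tfrac12\al_J\cdot(\text{adjust})
\]
— choosing the absolute constant $C_5$ in \eqref{p13} appropriately — and combining with the lower bound $\|R_J f\|_p^p \ge \al_J$ from \eqref{2-4} yields $\|R_I f\|_p^p \ge \frac{(1-\sa_1)\al_J}{2}$, while combining with $\|R_J f\|_p^p \le \bt_J$ yields $\|R_I f\|_p^p \le \frac{(1+\sa_1)\bt_J}{2}$. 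This is exactly \eqref{3-8a}–\eqref{p13}. Homogeneity in $f$ extends the bound from the unit sphere to all of $X_n$.

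The step I expect to be the main obstacle is the careful bookkeeping of constants when transferring Theorem \ref{p1T2} from $\Og_M$ to the subset $J$: one must be sure that replacing $M$ by $|J|$ and $Kn$ by something like $Kn/\al_J$ in the Nikol'skii inequality leaves the final bound in the clean form $\sa_1 = C_5\sqrt{Kn\log n/(\al_J M)}$ with $\log n$ (not $\log|J|$, which is only better) and with an absolute $C_5$ independent of $J$. A secondary subtlety is ensuring the three "good events" (the two supremum bounds and the cardinality bound) hold simultaneously for some sign realization; this is handled by a soft averaging argument rather than anything deep, but the constants in \eqref{2-5} must be allowed to absorb the resulting loss (hence the statement tolerates the factor $(1 - 1/\sqrt{|J|})$ rather than something tighter). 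Everything else is the routine identity $R_I = \frac12 R_J + \frac12 S$ and the two-sided estimates it produces.
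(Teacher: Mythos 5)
Your overall route is the same as the paper's: select $I=\{i\in J:\va_i=1\}$ by independent fair signs, use the identity $\|R_I f\|_p^p=\tfrac12\|R_Jf\|_p^p+\tfrac12\sum_{j\in J}\va_j|f(j)|^p$, transfer Theorem \ref{p1T2} to the subspace $R_JX_n\subset\RR^{|J|}$ via the Nikol'skii constant $K_J=K|J|/(\al_J M)$ (your computation of this is correct and matches the paper's), and then intersect the good events by Markov. The genuine gap is in how you obtain \eqref{2-5}. The first-moment bound $\EE\bigl||I|-|J|/2\bigr|\le\tfrac12\sqrt{|J|}$ plus Markov (or any averaging over the three nonnegative deviations) cannot produce the one-sided constraint $|I|\le |J|/2$ at all --- that is an event of probability about $1/2$, not something controlled by a moment of the absolute deviation --- and at the exact threshold $\tfrac12\sqrt{|J|}$ Markov is vacuous, so with your argument the lower deficit comes out as $C\sqrt{|J|}$ with some $C>1$ and the upper bound $|J|/2$ is lost. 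You suggest absorbing this into the constants of \eqref{2-5}, but those constants are part of the statement and are not negotiable: in Step 3 of the proof of Theorem \ref{MT1} they are fed verbatim into Lemma \ref{ILL4}, whose hypothesis is precisely $(m_j-\sqrt{m_j})/2\le m_{j+1}\le m_j/2$, and the bound $|I|\le|J|/2$ is what yields the factor $2^{-s-1}$ in the final point count. The paper closes this step with an anti-concentration input you are missing: Lemma \ref{p1L3} (the Keller--Klein bound $\PP(|\sum_j a_j\va_j|\le 1)\ge\tfrac12$), which by symmetry gives $\PP\bigl(\tfrac{|J|}2(1-\tfrac1{\sqrt{|J|}})\le|I|\le\tfrac{|J|}2\bigr)\ge\tfrac14$, and the Markov thresholds for the two suprema are calibrated (expectation $\le\tfrac18\sa_1$) so that this event intersects the two supremum events.

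A secondary point: you should run the supremum bound relative to $\|R_Jf\|_p^p$ (i.e.\ apply Theorem \ref{p1T2} to $R_JX_n$ with its own unit ball, as the paper does in \eqref{2-7} and \eqref{4-10}), not relative to $\|f\|_p^p$. With your normalization $\|f\|_p=1$ the deviation is only bounded by a multiple of $\bt_J$, and the resulting lower estimate is $\tfrac12(\al_J-\sa\bt_J)\|f\|_p^p$, which is weaker than $\tfrac{(1-\sa_1)\al_J}{2}\|f\|_p^p$ whenever $\bt_J\gg\al_J$; measuring against $\|R_Jf\|_p^p$ and only then invoking \eqref{2-4} gives the clean multiplicative form \eqref{p13}.
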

\begin{proof}
Without loss of generality, we may assume that  $J=\{1,2,\cdots, M_1\}$.   By \eqref{2-1}  and \eqref{2-4}, we have
\begin{equation}\label{2-6}
	\sup_{f\in X_n} \f {\|R_J f\|_\infty} {\|R_J f\|_{2}}\le \sup_{f\in X_n} \f {\|f\|_\infty} {\|R_J f\|_{2}}\leq \sqrt{\f {K_J n}{|J|}},
\end{equation}
where
$  K_J:=\f {K |J|} {\al_J M}$. Set $X_n(I) := \{R_I(f) \, :\, f\in X_n\}$.
By  Theorem \ref{p1T2} applied to  $\Og_{M_1}=J$ and $K=K_J$, we obtain for $p=1$ and $p=2$
\begin{align}
\EE \Bl( \sup_{f\in X_n(J)\cap B_p^{M_1} } \Bl|   \sum_{j=1}^{M_1}  { \va_j}  |f(j)|^p\Br| \Br) &\leq
C_4 \sqrt{ \f { K_J n\log n} {|J|}}\notag\\
&= C_4\sqrt{ \f { Kn\log n} {\al_J M}}=:\f 18\sa_1.\label{2-7}
\end{align}

Next,  consider the random set  $I:=\{i\in J:  \va_i=1\}$.  Clearly,
\[|I|=\sum_{i\in J} \f {\va_i+1}2=\f {|J|} 2 +\f12\sum_{i\in J}\va_i.\]
We use the following result from \cite{KK}.
	
	\begin{Lemma}\textnormal{\cite{KK} }\label{p1L3}
		 If $(a_1,\cdots, a_m)\in\RR^m$ and $\sum_{j=1}^m a_j^2=1$, then
		\begin{equation}\label{3-1:eq}
			\PP\Bl( |\sum_{j=1}^m a_j \va_j |\leq 1 \Br) \ge \f12.
		\end{equation}
	\end{Lemma}
Using \eqref{3-1:eq},   we have
\begin{align*}
\PP \Bl(  -\f {\sqrt{|J|}} 2\leq  |I|-\f { |J|}2\leq 0\Br)&=\PP\Bl( -\sqrt{M_1} \leq \sum_{i=1}^{M_1} \va_i \leq 0\Br) \\
&=\f 12 \PP\Bl(|\sum_{i=1}^{M_1}\va_i|\leq \sqrt{M_1}\Br)\ge \f14.
\end{align*}
This means that  with probability $\ge \f14$, we have
\begin{align}\label{2-10a}
 \f {|J|}2 \Bl(1-\f 1 {\sqrt{|J|}}\Br) \leq |I| \leq \f {|J|}2.
\end{align}
 Now combining  \eqref{2-7}  with \eqref{2-10a}, we can find a finite sequence $\{\va_j: \  1\leq j\leq M_1\}\subset \{\pm 1\}$
such that \eqref{2-10a} with  $I:=\{i\in J:  \va_i=1\}$  is satisfied, and such that  for every $f\in X_n$,
\begin{align}
&\Bl|   \sum_{i=1}^{M_1}  { \va_i}  |f(i)|^2\Br|  \leq\sa_1\|R_J f\|^2_2\    \   \text{and}\  \      \Bl|  \sum_{i=1}^{M_1}   \va_i |f(i)|  \Br|
\leq \sa_1 	\|R_J f\|_{1}.\label{4-10}
\end{align}
On the other hand, note that  for any $x\in\RR^M$ we have for $1\le q<\infty$,
\[
\Bl| \sum_{i=1}^{M_1}  \va_i |x(i)|^q \Br|=\Bl| \sum_{i=1}^{M_1}  (1+ \va_i) |x(i)|^q -\|R_J x\|_q^q \Br|=\Bl| 2\|R_I x\|_q^q -\| R_J x\|_q^q \Br|.
\]
Thus, we obtain from    \eqref{4-10} that for $p=1,2$
\begin{align}
\f {1-\sa_1} 2 \sum_{i\in J} |f(i)|^p   \leq  \|R_I f\|_p^p  \leq \f {1+\sa_1} 2 \sum_{i\in J} |f(i)|^p.\label{2-13}
\end{align}
 Combining    \eqref{2-13}  with \eqref{2-4},   we obtain the desired estimate \eqref{3-8a}.
\end{proof}

{\bf Step 3. Iteration.} For given numbers $K$, $n$, and $M$ define
$$
\de := \de(K,n,M) := C_5^2\frac{Kn\log n}{M},
$$
where $C_5$ is from (\ref{p13}). Recall that $\theta=\varkappa_2\epsilon$.
Without loss of generality we may assume that $\delta\le \theta^2$,
since otherwise we already have $M\le C_5^2\varkappa_2^{-2}{\epsilon^{-2}} Kn\log n$
and we have discretization with $\epsilon_0\le \epsilon$ on the first step.
Consider the sequence $\{\al_j\}_{j=0}^{s+1}$ from Lemma~\ref{ILL2} with $\theta=\varkappa_2\epsilon$. We now iterate applications of Lemma \ref{p1L2}. We begin with $I_0:=\{1,2,\dots,M\}$ and obtain the sequence $\{I_j\}_{j=0}^{s+1}$. Let the sequence $\{\bt_j\}_{j=0}^{s+1}$ be from Lemma~\ref{ILL3} ({i.e. we take $\varkappa=1$}). Then by Lemma \ref{p1L2} we obtain for any $f\in X_n$ and $p=1,2$
\be\label{p1j}
\al_j \|f\|_p^p \le \|R_{I_j}f\|_p^p \le \bt_j \|f\|_p^p,\quad j=0,\dots,s+1.
\ee
By Lemma \ref{ILL3} we obtain
\be\label{p13'}
\al_{s+1} \|f\|_p^p \le \|R_{I_{s+1}}f\|_p^p \le e^{c_3\theta}\al_{s+1} \|f\|_p^p.
\ee
Note that Lemma \ref{ILL3} and the trivial inequalities
$\al_{j+1}\le \al_j/2$, $\bt_{j+1}\ge \bt_j/2$ imply
\be\label{p14}
\al_{s+1} \le 2^{-s-1}
\le \bt_{s+1}\le e^{c_3\theta}\al_{s+1}.
\ee
Set $m_j:=|I_j|$, $j=0,\dots,s+1$. Then, by Lemma \ref{ILL4} we obtain
\be\label{p15}
M-2^{(s+1)/2}M^{1/2}(\sqrt{2}-1)^{-1} \le 2^{s+1}m_{s+1} \le M.
\ee
Set $m:=m_{s+1}$. Under assumption $\de \ge 4(\sqrt{2}-1)^{-2}M^{-1}$, which we certainly can impose without loss of generality, we obtain from (\ref{p14}) and (\ref{IL1}) that
$$
4\theta^2 \cdot 2^{-s-1}
\ge
4\theta^2\cdot \al_{s+1}
\ge
4\theta^2\cdot \frac{\de}{4\theta^2}=\delta\ge4(\sqrt{2}-1)^{-2}M^{-1}.
$$
Thus, $2^{(s+1)/2}M^{-1/2}\le \theta(\sqrt{2}-1)$ and,
combining this bound with (\ref{p15}) and with (\ref{p14}),
we obtain
\be\label{p16}
\alpha_{s+1}(1-\theta)M\le 2^{-s-1}(1-\theta)M
\le m \le 2^{-s-1}M\le e^{c_3\theta}\alpha_{s+1}M.
\ee

Finally, (\ref{p13'}) and (\ref{p16}) imply
$$
e^{-c_3\theta}\frac{1}{M}\|f\|_p^p \le
\frac{1}{m}\sum_{k\in I_{s+1}} |f(k)|^p \le (1-\theta)^{-1}e^{c_3\theta}\frac{1}{M}\|f\|_p^p
$$
with
$$
m=|I_{s+1}| \le 2^{-s-1}M\le e^{c_3\theta}\al_{s+1}M \le e^{c_3\theta}\theta^{-2}\de M
$$
$$
=C_5^2 \varkappa_2^{-2}e^{c_3\varkappa_2\epsilon}
\epsilon^{-2}Kn\log n \le C_8\epsilon^{-2} Kn\log (n).
$$
We now choose $\varkappa_1$ and $\varkappa_2$ such that
$$
1-\epsilon\le (1-\epsilon_0)e^{-c_3\theta} \hbox{ and }
(1+\epsilon_0)(1-\theta)^{-1}e^{c_3\theta}\le 1+\epsilon.
$$
Theorem is proved.
\end{proof}

\section{The case $1<p<2\colon$ proof of Theorem \ref{MT2}}
\label{p}

The main purpose of this section is to prove Theorem \ref{MT2}.
The argument  follows along the lines of Section \ref{p1}.
Indeed, we will prove  a slight improvement of Theorem \ref{MT2}.

\begin{Theorem}\label{T1p} Let  $X_n$ be a subspace of $\cC(\Og)$ of dimension at most $n$
	satisfying the Nikol'skii inequality
	\begin{equation}\label{M3}
	\|f\|_{\infty} \leq \sqrt{Kn} \|f\|_{L_2(\Og)},\     \  \   \ \forall f\in X_n
	\end{equation}
	for some constant  $K\ge 1$. Then for any 	$1<p<2$,  there exists a positive constant $C_1(p)$  depending only on $p$  such that for any $\ep \in (0,1/4)$,
		there is a finite set of points  $\{\xi^1,\cdots,\xi^m\}\subset \Og$  with
	$$
	m\leq C_{1}(p)\va^{-2} Kn\Bl(\log (Kn)+\log(1/\va)\Br) \Bl(\log(1/\va)+\log \log (Kn)\Br)^2,
	$$
 which provides the discretization inequalities for both  the $L_p$ norm and  the $L_2$ norm:
		\begin{align*}
		(1-\ep)\|f\|_{L_p(\Og)}^p \leq \f 1m \sum_{j=1}^m |f(x_j)|^p \leq  (1+\ep)\|f\|_{L_p(\Og)}^p,\   \ \forall f\in X_n,
		\end{align*}
$$
(1-\ep_n)\|f\|_{L_2(\Og)}^2  \leq \f 1m \sum_{j=1}^m |f(x_j)|^2 \leq  (1+\ep_n)\|f\|_{L_2(\Og)}^2,\   \ \forall f\in X_n,
$$
with $\ep_n=\ep\cdot  (\log \log (2Kn))^{-1}$.	
		
\end{Theorem}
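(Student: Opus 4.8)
The plan is to follow the three–step scheme of the proof of Theorem~\ref{MT1} in Section~\ref{p1}, replacing the case $p=1$ of Rudelson's estimate (Theorem~\ref{p1T2}) by its $L_p$–analogue for $1<p<2$, namely Theorem~\ref{thm-5-1}, while still using the case $p=2$ of Theorem~\ref{p1T2} to control the $L_2$ norm along the way. Fix $\ep\in(0,1/4)$ and set $\ep_0=\varkappa_1\ep$, $\theta=\varkappa_2\ep$ with absolute constants $\varkappa_1,\varkappa_2\in(0,1)$ to be chosen at the end. \emph{Step 1 (preliminary discretization).} First I would apply Lemma~\ref{p1L1} with the given $p$ and the parameter $\ep_0$ to obtain a set $\Ld_M=\{x_1,\dots,x_M\}\subset\Og$ with $M\le C_p\ep_0^{-8}Kn(\log(Kn))^3$ on which both the $L_p$ and the $L_2$ norms of every $f\in X_n$ are reproduced within a factor $1\pm\ep_0$. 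Exactly as in Section~\ref{p1}, this lets me replace $(\Og,\mu)$ by $\Og_M=\{1,\dots,M\}$ equipped with the uniform measure and with Nikol'skii constant at most $\sqrt{2Kn}$; from here on the whole argument takes place inside $\RR^M$ with the notation of Step~2 of Section~\ref{p1}.

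\emph{Step 2 (the halving lemma).} The core is to prove the analogue of Lemma~\ref{p1L2}: if $J\subset\Og_M$ satisfies
$$\al_J\|f\|_q^q\le\|R_Jf\|_q^q\le\bt_J\|f\|_q^q\qquad(f\in X_n,\ q=p,\ q=2),$$
then the relativised space $X_n(J):=\{R_Jf:\,f\in X_n\}$ obeys, as in \eqref{2-6}, the Nikol'skii inequality with constant $\sqrt{K_Jn/|J|}$, $K_J:=K|J|/(\al_JM)$, and the $q=2$ and $q=p$ estimates supply the comparison of its discrete $L_2$ and $L_p$ norms that Theorem~\ref{thm-5-1} needs. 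I would then apply Theorem~\ref{thm-5-1} to $X_n(J)$ to bound $\EE\bigl(\sup_{f\in X_n(J)\cap B_p^{|J|}}\bigl|\sum_j\va_j|f(j)|^p\bigr|\bigr)$ and the case $p=2$ of Theorem~\ref{p1T2} (Rudelson's bound) to bound the analogous quantity with exponent $2$; then, exactly as in the proof of Lemma~\ref{p1L2}, combining these expectation bounds with Lemma~\ref{p1L3} and extracting a favourable sign pattern would produce $I\subset J$ with $\tfrac{|J|}2(1-|J|^{-1/2})\le|I|\le\tfrac{|J|}2$ on which the $q=p$ and $q=2$ inequalities persist with $\al_I=\tfrac12(1-\sa^{(q)})\al_J$, $\bt_I=\tfrac12(1+\sa^{(q)})\bt_J$. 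The one new feature is that the per–step error for the exponent $p$ is $\sa^{(p)}\asymp\bigl(Kn\,\mathcal L/(\al_JM)\bigr)^{1/2}$, where $\mathcal L$ is the logarithmic factor produced by Theorem~\ref{thm-5-1} — of the shape $\log M\,(\log\log M)^2$, which after Step~1 is $\asymp(\log(Kn)+\log(1/\ep))(\log(1/\ep)+\log\log(Kn))^2$ — whereas for the exponent $2$ Rudelson's bound keeps the clean form $\sa^{(2)}\asymp\bigl(Kn\log n/(\al_JM)\bigr)^{1/2}$, i.e.\ is smaller than $\sa^{(p)}$ by essentially the factor $\log(1/\ep)+\log\log(Kn)$.

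\emph{Step 3 (iteration).} I would then put $\de:=\de(K,n,M,p)\asymp Kn\,\mathcal L/M$, so that at the $j$-th stage $\sa^{(p)}=(\de/\al_j)^{1/2}$ and $\sa^{(2)}=\varkappa^{(2)}(\de/\al_j)^{1/2}$ with $\varkappa^{(2)}\asymp(\log(1/\ep)+\log\log(Kn))^{-1}$. If $\de>\theta^2$ then $M\le C\varkappa_2^{-2}\ep^{-2}Kn\,\mathcal L$ and the first step already suffices, so I may assume $\de\le\theta^2$ and invoke Lemmas~\ref{ILL2}, \ref{ILL3} and \ref{ILL4} with $\theta=\varkappa_2\ep$. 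Starting from $I_0=\Og_M$ and iterating the halving lemma produces $I_0\supset I_1\supset\cdots\supset I_{s+1}$; Lemma~\ref{ILL3} with $\varkappa=1$ pins the final $L_p$ discretization ratio to $e^{c_3\theta}=1+O(\ep)$, the same lemma with $\varkappa=\varkappa^{(2)}$ pins the final $L_2$ ratio to $1+O(\varkappa^{(2)}\theta)=1+O\bigl(\ep/\log\log(2Kn)\bigr)=1+O(\ep_n)$, Lemma~\ref{ILL4} gives $m:=|I_{s+1}|\asymp 2^{-s-1}M\asymp\al_{s+1}M\asymp\theta^{-2}\de M$ — which after inserting the value of $M$ from Step~1 is precisely the asserted bound — and the normalisation of $\tfrac1m\sum_{k\in I_{s+1}}|f(k)|^q$ against $\tfrac1M\|f\|_q^q$ is carried out as in the closing display of Section~\ref{p1}. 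Finally I would choose $\varkappa_1,\varkappa_2$ small enough to absorb $\ep_0$ and $\theta$ into $\ep$.

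\emph{The main obstacle.} Granting Theorem~\ref{thm-5-1} (whose proof is the genuinely deep part and is deferred to Appendix~II), the point that really has to be checked is that the per–step error can honestly be written as $\sa^{(p)}=(\de/\al_j)^{1/2}$ with one and the same $\de$ along the whole chain $I_0\supset\cdots\supset I_{s+1}$. This requires two uniform estimates. First, $K_{I_j}\asymp K$: this holds because Lemmas~\ref{ILL2}, \ref{ILL3} and \ref{ILL4} force $\al_j\asymp 2^{-j}$ and $|I_j|\asymp 2^{-j}M$, so $K_{I_j}=K|I_j|/(\al_jM)$ stays within a bounded factor of $K$. Second, the logarithmic factor of Theorem~\ref{thm-5-1} for $X_n(I_j)$ is $\lesssim\mathcal L$: this holds because $|I_j|\le M$ and both $M$ and $m$ equal $Kn$ times a fixed power of $\log(Kn)+\log(1/\ep)$, so all the logarithms and iterated logarithms occurring in that bound stay comparable throughout. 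After these checks the rest is bookkeeping: the number of iterations is $s\asymp\log(M/m)\asymp\log\log(Kn)+\log(1/\ep)$; the two error rates $\varkappa=1$ and $\varkappa^{(2)}$ must be carried separately through Lemma~\ref{ILL3}, which is exactly what yields the asymmetric accuracies $\ep$ for the $L_p$ norm and $\ep_n$ for the $L_2$ norm; and the several logarithmic factors must be composed to match the stated bound on $m$.
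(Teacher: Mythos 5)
Your proposal follows the paper's own route: the same three--step scheme (preliminary simultaneous discretization via Lemma \ref{p1L1}, a halving lemma built from Theorem \ref{thm-5-1} for the exponent $p$ together with the case $p=2$ of Theorem \ref{p1T2} and Lemma \ref{p1L3}, then the iteration of Lemmas \ref{ILL2}--\ref{ILL4}). Your only deviation is a dual parametrization of the iteration: you let the larger $L_p$ rate drive the sequence $\al_j$ (so $\varkappa=1$ for $L_p$ and $\varkappa^{(2)}<1$ for $L_2$), whereas the paper drives the iteration by the $L_2$ rate $\de=C^2Kn\log M/M$ and multiplies by $\varkappa=\log(2+M/(Kn))$ for $L_p$, taking $\theta=\varkappa_2\e/\varkappa$. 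This reparametrization is harmless: it is consistent (your $\al_j$ is a lower bound for the actual $L_2$ constants, so the per-step errors are controlled), and it yields the same bound on $m$ and the same $L_p$ accuracy.

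However, as written your parameters do not deliver the second half of the statement, the sharper $L_2$ accuracy $\ep_n=\e(\log\log(2Kn))^{-1}$. Two places lose it. First, you set $\e_0=\varkappa_1\e$ with $\varkappa_1$ an absolute constant; the Step-1 error enters the final $L_2$ inequalities multiplicatively and is never reduced afterwards, so your end result for the $L_2$ norm is $1\pm\Theta(\e)$, not $1\pm\ep_n$ (the same problem occurs in your degenerate case $\de>\theta^2$, where you stop after Step 1). The paper takes $\e_0=\varkappa_1\e/\log\log(4Kn)$ precisely for this reason; this changes $M$ only by a power of $\log\log (Kn)$ and hence does not affect the final bound on $m$, since $\e_0$ enters that bound only through $\log M$ and $\log(2+M/(Kn))$. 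Second, if the normalization of $\f1m\sum_{k\in I_{s+1}}|f(k)|^2$ against $\f1M\|f\|_2^2$ is carried out ``as in the closing display of Section \ref{p1}'', the factor $(1-\theta)^{-1}$ coming from $m\ge 2^{-s-1}(1-\theta)M$ contributes, with your $\theta=\varkappa_2\e$, an error of order $\e$ to the upper $L_2$ inequality. You must either adopt the paper's choice $\theta=\varkappa_2\e/\varkappa$, for which this slack is automatically of order $\ep_n$, or replace the crude bound of Section \ref{p1} by the sharper one available in your parametrization, namely $2^{(s+1)/2}M^{-1/2}\le 2\theta(\de M)^{-1/2}\lesssim \e\,\bl(\varkappa\sqrt{Kn\log M}\br)^{-1}$, which is again $O(\ep_n)$. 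With these two adjustments your argument coincides with the paper's proof; the $L_p$ inequalities and the stated bound on $m$ are correct as you have them.
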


\begin{proof}

Let $\epsilon\in(0, 1/4)$
be a fixed number, and set
$$
\epsilon_0 := \frac{\varkappa_1\epsilon}{\log\log({4}Kn)},
$$
where $\varkappa_1\in (0, 1)$ is an absolute constant to be specified later. \\

	 {\bf Step 1p. } This step is the same as in the proof of Theorem \ref{MT1}.
We use Lemma \ref{p1L1}
to   obtain
a finite set $\Ld_M:=\{x_1,\cdots, x_M\}\subset \Og$
with $M\leq C_p\epsilon_0^{-8} K n (\log (Kn))^3 $
such that both \eqref{3-2a} and \eqref{3-3a} hold
with $m=M$  for  all  $f\in X_n$. Without loss of generality, we may also assume that
	\begin{equation}\label{4-1-2022}
			M\ge  C_9\va^{-2} Kn(\log (Kn)+\log(1/\va)) (\log(1/\va)+\log \log (Kn))^2,
	\end{equation}
since otherwise there's nothing to prove.

{\bf Step 2p.} It is similar to Step 2 of the proof of Theorem \ref{MT1}. Instead of Theorem \ref{p1T2} we use the following result.

\begin{Theorem}\label{thm-5-1}
	Let $X_n$ be a  subspace of $\RR^M$ of dimension at most $n$ satisfying
	\begin{equation}\label{5-1-Ta}
	\|f\|_{\infty}\le \sqrt{Kn} \|f\|_{2,\Og_M} = \sqrt{\f {Kn}M} \|f\|_{2},\     \  \   \ \forall f\in X_n
	\end{equation}
	for some constant  $K\ge 1$.
	Then for $p\in (1,2)$, we have
	\begin{align}
	\EE \Bl( \sup_{f\in X_n\cap B_p^M } \Bl| \sum_{i=1}^M |f(i)|^p\va_i  \Br|\Br) \leq C(p) \sqrt{\f {Kn\log M} M} \log (\f M{Kn}+2).\label{3-1-Ta}
	\end{align}
\end{Theorem}


For $1<p<2$,    Talagrand \cite[Theorem 16.8.2]{Ta} proved  a more general result   for a probability measure $\mu$ on $\RR^M$ satisfying $\mu\{j\}\leq \f 2M$ for $1\leq j\leq M$ under a stronger assumption on the space  $X_n$:
\[ \f 1n \sum_{j=1}^n \vi_j(i)^2 =1,\   \ i=1,2,\cdots, M,\]
where $\{\vi_j\}_{j=1}^n$ is an orthonormal basis of $(X_n, \|\cdot\|_{L_2(\mu)})$.  The proof of Talagrand \cite[Theorem 16.8.2]{Ta} is very difficult, but can be modified and slightly simplified to obtain Theorem \ref{thm-5-1}.
 For completeness, we will present  a relatively self-contained  proof of  Theorem \ref{thm-5-1}  in Appendix II.

 In the same way as Lemma \ref{p1L2} has been proven in Section \ref{p1} we can prove the following result.

\begin{Lemma}\label{L1p} Let  $X_n$ be   a subspace of $\RR^M$ of dimension at most $n$ satisfying \eqref{5-1-Ta}
	for some constant $K\ge 1$.  Let  $1<p<2$ and   $J\subset \Og_M:=\{1, 2,\cdots, M\}$. Assume that     there exist positive  constants $\al_J$, $\bt_J$, $a_J$, $b_J$    such that  for any $f\in X_n$,
	\be\label{5p}
	\al_J \| f\|^2_2 \leq \|R_J f\|^2_2 \leq \bt_J  \| f\|^2_2\   \   \ \text{and}\   \ 		a_J \| f\|^p_p \leq \|R_J f\|^p_p \leq b_J  \| f\|^p_p.
\ee
		Then there exists a subset $I\subset J$ with
	\begin{equation}\label{6p}
	\f {|J|} 2 \Bl(1-\f 1 {\sqrt{|J|}} \Br) \leq |I|\leq \f {|J|} 2
	\end{equation}
such that
	\begin{align}
\al_I \| f\|^2_2 \leq \|R_I f\|^2_2 \leq \bt_I  \| f\|^2_2\   \   \ \text{and}\   \ 		a_I \| f\|^p_p \leq \|R_I f\|^p_p \leq b_I  \| f\|^p_p,\label{7p}
\end{align}
where
\begin{align*}
\al_I:&=  \f {(1-\sa_1)\al_J }2,\   \    \bt_I:=  \f {(1+\sa_1)\bt_J }2,\   \ \sa_1:= C_5\sqrt{ \f { K n\log n} {\al_J M }},\\
a_I:&=  \f {(1-\sa_2)a_J }2 ,\   \
 b_I:=  \f {(1+\sa_2)b_J }2,
\end{align*}
$$
 \sa_2:= C_p\sqrt{ \f { K n\log |J|} {\al_J M }} \log \Bl(2+\f {M}{Kn}\Br),
 $$	
and  $C_p$ is a constant  depending  only on $p$.
\end{Lemma}

We point out that
$$
\sa_1\le
C\sqrt{ \f { K n\log M} {\al_J M }}:=\sa_1'\quad
\sa_2
\le
C
\sqrt{ \f { K n\log M} {\al_J M }} \log \Bl( 2+\f {M}{Kn}\Br):=\sa_2',
$$
where $C:=\max\{C_5, C_p\}$.
Thus, we can use $\sa_1'$ and $\sa_2'$ in place of
$\sa_1$ and $\sa_2$ in the lemma above.

{\bf Step 3p.}
As in the proof of Theorem \ref{MT1},  we iterate applications of Lemma \ref{L1p}
and obtain sequences $\{\al_j\}_{j=0}^{s+1}$, $\{\bt_j\}_{j=0}^{s+1}$,
$\{a_j\}_{j=0}^{s+1}$, $\{b_j\}_{j=0}^{s+1}$ as defined  in Lemma~\ref{ILL2} and
Lemma~\ref{ILL3} with
$$
\de:= C^2\frac{Kn\log M}{M},\quad \varkappa:= \log\Bl( 2+\f {M}{Kn}\Br)\   \ \text{and}\  \
\theta := \frac{\varkappa_2\epsilon}{\varkappa},
$$
where $\varkappa_2\in (0, 1)$ is an absolute constant to be specified later.
{Recalling that}  $$ M\leq C_p\epsilon_0^{-8} K n (\log (Kn))^3\   \ \text{and}\  \
\epsilon_0 = \frac{\varkappa_1\epsilon}{\log\log(4Kn)},
$$   we may choose   the constant $C_9$ in \eqref{4-1-2022} sufficiently large so that the parameters $\delta$, $\theta$ and $\varkappa$ satisfy all  the conditions of Lemma~\ref{ILL2} and
Lemma~\ref{ILL3}; that is,
$$  \delta\in (0, \f14),\   \  \theta\in (0, \f12),\   \ \delta<\theta^2,\  \ 0<\varkappa<{\frac{1}{2}}\theta^{-1}.$$
Thus, we have
$$
a_{s+1} \|f\|_p^p \le \|R_{I_{s+1}}f\|_p^p \le b_{s+1} \|f\|_p^p
\le \exp(c_3\varkappa\theta)a_{s+1}\|f\|_p^p;
$$
$$
\al_{s+1} \|f\|_2^2 \le \|R_{I_{s+1}}f\|_2^2 \le \bt_{s+1} \|f\|_2^2
\le \exp(c_3\theta)\al_{s+1}\|f\|_2^2.
$$

As in the proof of Theorem \ref{MT1}, for $m=m_{s+1} = |I_{s+1}|$ we have
$$
2^{-s-1}(1-\varkappa\theta)M \le2^{-s-1}(1-\theta)M \le m \le 2^{-s-1}M.
$$
Since
$$
\al_{s+1} \le 2^{-s-1}
\le \bt_{s+1}\le e^{c_3\theta}\al_{s+1};\quad
a_{s+1}\le 2^{-s-1}\le b_{s+1}\le e^{c_3\varkappa\theta}a_{s+1},
$$
we get
$$
e^{-c_3\varkappa\theta}\frac{1}{M}\|f\|_p^p \le
\frac{1}{m}\sum_{k\in I_{s+1}} |f(k)|^p \le
(1-\varkappa\theta)^{-1}e^{c_3\varkappa\theta}\frac{1}{M}\|f\|_p^p
$$
and
$$
e^{-c_3\theta}\frac{1}{M}\|f\|_2^2 \le
\frac{1}{m}\sum_{k\in I_{s+1}} |f(k)|^2 \le
(1-\theta)^{-1}e^{c_3\theta}\frac{1}{M}\|f\|_2^2
$$
with
$$
m=|I_{s+1}| \le 2^{-s-1}M\le e^{c_3\theta}\al_{s+1}M \le e^{c_3\theta}\theta^{-2}\de M.
$$
We now choose $\varkappa_1$ and $\varkappa_2$ such that
$$
1-\epsilon\le (1-\varkappa_1\epsilon)e^{-c_3\varkappa_2\epsilon} \hbox{ and }
(1+\varkappa_1\epsilon)(1-\varkappa_2\epsilon)^{-1}e^{c_3\varkappa_2\epsilon}
\le 1+\epsilon.
$$
Since $\log M\le c_1(\log\epsilon^{-1} + \log (Kn))$ and
$\varkappa\le c_2(\log\epsilon^{-1} + \log\log(Kn))$
we get the desired result. Theorem is proved.
\end{proof}

\section{Discretization and frames}
\label{df}

In this section we discuss finite-dimensional subspaces $X_n$ of the space $\cC(\Og)$ defined
on a compact set $\Og\subset \R^d$ equipped with a probability measure $\mu$. For convenience we only consider the case of real functions.


{\bf Dirichlet kernel.} For an orthonormal system $\cU_n:=\{u_j(\bx)\}_{j=1}^n$  on $(\Og,\mu)$
we define the Dirichlet kernel as follows
$$
\cD_n(\cU_n,\bx,\by) := \sum_{j=1}^n u_j(\bx)u_j(\by).
$$
Here is a very simple claim that the Dirichlet kernel $\cD_n(\cU_n,\bx,\by)$ does not depend on the orthonormal basis of a given subspace $X_n$.

\begin{Proposition}\label{dfP1} For any two orthonormal bases $\cU_n$ and $\cV_n$ of
	a given subspace $X_n$ we have
	$$
	\cD_n(\cU_n,\bx,\by) = \cD_n(\cV_n,\bx,\by).
	$$
\end{Proposition}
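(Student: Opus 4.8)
The statement asserts that the Dirichlet kernel $\cD_n(\cU_n,\bx,\by)=\sum_{j=1}^n u_j(\bx)u_j(\by)$ depends only on the subspace $X_n=\operatorname{span}\,\cU_n$, not on the choice of orthonormal basis. The natural approach is to express the change of basis between two orthonormal bases $\cU_n=\{u_j\}_{j=1}^n$ and $\cV_n=\{v_j\}_{j=1}^n$ of $X_n$ via an orthogonal matrix, and then observe that the kernel is an orthogonal invariant of that matrix. First I would write $v_i=\sum_{k=1}^n a_{ik}u_k$ for some real matrix $A=(a_{ik})$; this is possible since both systems span $X_n$. The orthonormality relations $\langle v_i,v_j\rangle = \delta_{ij}$ together with $\langle u_k,u_\ell\rangle=\delta_{k\ell}$ force $\sum_{k=1}^n a_{ik}a_{jk}=\delta_{ij}$, i.e. $AA^{T}=I_n$, so $A$ is orthogonal and hence also $A^{T}A=I_n$, that is $\sum_{i=1}^n a_{ik}a_{i\ell}=\delta_{k\ell}$.

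\textbf{Key computation.} With this in hand I would simply compute
\[
\cD_n(\cV_n,\bx,\by)=\sum_{i=1}^n v_i(\bx)v_i(\by)
=\sum_{i=1}^n\Bl(\sum_{k=1}^n a_{ik}u_k(\bx)\Br)\Bl(\sum_{\ell=1}^n a_{i\ell}u_\ell(\by)\Br)
=\sum_{k=1}^n\sum_{\ell=1}^n\Bl(\sum_{i=1}^n a_{ik}a_{i\ell}\Br)u_k(\bx)u_\ell(\by),
\]
and then use $\sum_{i=1}^n a_{ik}a_{i\ell}=\delta_{k\ell}$ to collapse the double sum to $\sum_{k=1}^n u_k(\bx)u_k(\by)=\cD_n(\cU_n,\bx,\by)$, which is exactly the claim.

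\textbf{Remarks on difficulty.} There is no real obstacle here; the only point requiring a line of care is justifying that the transition matrix between two orthonormal bases is orthogonal, which follows immediately from the inner-product relations. One could alternatively give a basis-free argument by identifying $\cD_n$ with the reproducing kernel of $X_n$ inside $L_2(\Og,\mu)$ — namely, for every $g\in X_n$ one has $\int_\Og \cD_n(\cU_n,\bx,\by)g(\by)\,d\mu(\by)=g(\bx)$, and the reproducing kernel of a finite-dimensional subspace of a Hilbert space is unique — but the direct change-of-basis computation above is the shortest and most self-contained route, so that is the one I would present.
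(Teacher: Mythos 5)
Your proof is correct and follows essentially the same route as the paper: expressing the change of basis between the two orthonormal bases by an orthogonal matrix and using $A^{T}A=I_n$ to collapse the double sum (the paper writes the identical computation in matrix--vector notation, $\bv(\bx)^T\bv(\by)=\bu(\bx)^TO^TO\bu(\by)=\bu(\bx)^T\bu(\by)$). No gaps; the additional remark on the reproducing-kernel viewpoint is a nice alternative but not needed.
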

\begin{proof} For $\bx\in\Og$ denote the column vectors $\bu(\bx) := (u_1(\bx),\dots,u_n(\bx))^T$, $\bv(\bx) := (v_1(\bx),\dots,v_n(\bx))^T$. Then there exists an orthogonal matrix $O$ such that for all $\bx\in\Og$ we have $\bv(\bx)= O\bu(\bx)$ and, therefore,
	$\bv(\bx)^T= \bu(\bx)^TO^T$. This implies that
	$$
	\cD_n(\cV_n,\bx,\by) = \bv(\bx)^T\bv(\by) = \bu(\bx)^TO^TO\bu(\by) =  \bu(\bx)^T \bu(\by) =
	\cD_n(\cU_n,\bx,\by).
	$$
\end{proof}

Proposition \ref{dfP1} shows that the Dirichlet kernel $\cD_n(\cU_n,\bx,\by)$ with $\cU_n$ being an orthonormal basis of $X_n$ can be seen as a characteristic of the subspace $X_n$.
Denote
$$
\cD(X_n,\bx,\by) := \cD_n(\cU_n,\bx,\by).
$$
Consider the system $\cD :=\{\cD(X_n,\bx,\by)\}_{\bx\in\Og}$ as a dictionary (not normalized) of
functions on $\by$ in the subspace $X_n$.

We recall the definition of the $p$-frame of the subspace $X_n$, $1\le p<\infty$ (see \cite{AFR}).

\begin{Definition}\label{dfD1} The system $\Psi:=\{\psi_j\}_{j=1}^m$ is said to be a $p$-frame
	of $X_n$ with positive constants $A$ and $B$ if for any $f\in X_n$ we have
	$$
	A\|f\|_{L_p(\Og,\mu)}^p \le \sum_{j=1}^m |\<f,\psi_j\>|^p \le B\|f\|_{L_p(\Og,\mu)}^p.
	$$
\end{Definition}

Using a well known property of the Dirichlet kernel: For any $f\in X_n$ we have
$$
f(\bx) = \int_\Og \cD(X_n,\bx,\by)f(\by)d\mu(\by),
$$
we derive from results of Sections \ref{p1} and \ref{p} the following corollaries.

\begin{Theorem}\label{dfT1} There exist three positive absolute constants $C_i$, $i=1,2,3$ such that for any $n$-dimensional subspace $X_n$ of  $\cC(\Og)$
	satisfying the Nikol'skii inequality
	\begin{equation}\label{df1}
	\|f\|_{\infty} \leq \sqrt{Kn} \|f\|_{L_2(\Og)},\     \  \   \ \forall f\in X_n
	\end{equation}
	for some constant $K\ge 1$,
	there is a finite set of points  $\xi^1,\cdots,\xi^m\in \Og$  with $m\leq C_{1} Kn\log n $
	such that the finite subdictionary $\Psi :=\{\cD(X_n,\xi^\nu,\by)\}_{\nu=1}^m$ of dictionary $\cD$ forms
	$p$-frames of the $X_n$ with constants $C_2m$ and $C_3m$ for $p=1$ and $p=2$.
\end{Theorem}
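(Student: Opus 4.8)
\textbf{Proof proposal for Theorem \ref{dfT1}.}
The plan is to translate the sampling discretization result of Theorem \ref{MT1} into the language of $p$-frames using the reproducing property of the Dirichlet kernel. First I would apply Theorem \ref{MT1} with, say, $\e = 1/2$: this produces a set of points $\xi^1,\dots,\xi^m\in\Og$ with $m\le C\,Kn\log n$ such that for every $f\in X_n$ and for both $p=1$ and $p=2$ one has $\frac12\|f\|_{L_p(\Og,\mu)}^p \le \frac1m\sum_{\nu=1}^m |f(\xi^\nu)|^p \le \frac32\|f\|_{L_p(\Og,\mu)}^p$. The key point is the identity $f(\xi^\nu) = \int_\Og \cD(X_n,\xi^\nu,\by)f(\by)\,d\mu(\by) = \<f,\cD(X_n,\xi^\nu,\cdot)\>$, valid for all $f\in X_n$ because $\cD(X_n,\cdot,\cdot)$ reproduces $X_n$; this is exactly the statement quoted just before the theorem. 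Hence $|f(\xi^\nu)| = |\<f,\psi_\nu\>|$ with $\psi_\nu := \cD(X_n,\xi^\nu,\by)$.

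Substituting this into the discretization inequalities gives, for $p=1,2$ and all $f\in X_n$,
$$
\frac{m}{2}\,\|f\|_{L_p(\Og,\mu)}^p \;\le\; \sum_{\nu=1}^m |\<f,\psi_\nu\>|^p \;\le\; \frac{3m}{2}\,\|f\|_{L_p(\Og,\mu)}^p,
$$
which is precisely the $p$-frame property of Definition \ref{dfD1} with lower constant $A = m/2$ and upper constant $B = 3m/2$. Setting $C_2 := 1/2$ and $C_3 := 3/2$ (and taking $C_1$ as the absolute constant from Theorem \ref{MT1} with $\e=1/2$, noting $K\ge 1$) yields the claimed bounds $C_2 m$ and $C_3 m$. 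The same set of points and the same subdictionary $\Psi$ work simultaneously for $p=1$ and $p=2$ because Theorem \ref{MT1} delivers both \eqref{M2} and \eqref{M2.1} for one common point set.

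There is essentially no obstacle here: the entire content is the reproducing identity plus a direct quotation of Theorem \ref{MT1}. The only minor point to state carefully is that the Dirichlet kernel is well defined as a characteristic of $X_n$ (independent of the choice of orthonormal basis), which is Proposition \ref{dfP1}, so that $\Psi$ is unambiguously attached to $X_n$. One should also remark that the constants $C_2, C_3$ can be replaced by any pair $1-\e$, $1+\e$ at the cost of enlarging $C_1$ via the $\e^{-2}$ factor in Theorem \ref{MT1}, but for a clean statement the fixed choice $\e=1/2$ suffices.
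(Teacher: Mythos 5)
Your proposal is correct and follows the paper's own route: the paper derives Theorem \ref{dfT1} directly from Theorem \ref{MT1} via the reproducing property $f(\xi^\nu)=\<f,\cD(X_n,\xi^\nu,\cdot)\>$, exactly as you do (cf. Comment \ref{df}.1 and the equivalence in Comment \ref{df}.2). Fixing $\e=1/2$ and reading off the frame constants $C_2m$, $C_3m$ is precisely the intended argument, so nothing is missing.
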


\begin{Theorem}\label{dfT2} Let $1<p<2$. There exist three positive constants $C_i'$, $i=1,2,3$, ($C_1'$ may depend on $p$ and $C_2'$ and $C_3'$ are absolute constants) such that for any $n$-dimensional subspace $X_n$ of $\cC(\Og)$
	satisfying the Nikol'skii inequality
	\begin{equation}\label{df2}
	\|f\|_{\infty} \leq \sqrt{Kn} \|f\|_{L_2(\Og)},\     \  \   \ \forall f\in X_n
	\end{equation}
	there is a finite set of points  $\xi^1,\cdots,\xi^m\in \Og$  with
	$$
	m\leq C_{1}' Kn\log (Kn) (\log \log (Kn))^2
	$$
	such that the finite subdictionary $\Psi :=\{\cD(X_n,\xi^\nu,\by)\}_{\nu=1}^m$ of dictionary $\cD$ forms a
	$p$-frame of the $X_n$ with constants $C'_2m$ and $C'_3m$  and forms a
	$2$-frame of the $X_n$ with constants $(1-\ep_n)m$ and $(1+\ep_n)m$,
	where
	$$\ep_n \asymp (\log \log (Kn))^{-1}.$$	
	
\end{Theorem}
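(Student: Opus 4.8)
The plan is to obtain Theorem~\ref{dfT2} as a direct corollary of Theorem~\ref{T1p}, via the reproducing identity for the Dirichlet kernel recorded just above. First I note that if $\cU_n=\{u_j\}_{j=1}^n$ is an orthonormal basis of $X_n$ and $f=\sum_{j=1}^n c_j u_j\in X_n$, then orthonormality gives
$$
\langle f,\cD(X_n,\bx,\cdot)\rangle=\Bigl\langle \sum_{i}c_i u_i,\ \sum_{j}u_j(\bx)\,u_j\Bigr\rangle=\sum_{j=1}^n c_j u_j(\bx)=f(\bx),\qquad \bx\in\Og,
$$
which by Proposition~\ref{dfP1} is independent of the chosen basis. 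Consequently, for any finite point set $\{\xi^\nu\}_{\nu=1}^m\subset\Og$ and $\psi_\nu:=\cD(X_n,\xi^\nu,\cdot)$ one has, for every exponent $q\in[1,\infty)$,
$$
\sum_{\nu=1}^m|\langle f,\psi_\nu\rangle|^q=\sum_{\nu=1}^m|f(\xi^\nu)|^q,\qquad f\in X_n,
$$
so that the $q$-frame inequalities of Definition~\ref{dfD1} for $\Psi=\{\psi_\nu\}_{\nu=1}^m$ coincide, after clearing the normalization $1/m$, with the sampling discretization inequalities for the $L_q$ norm on $\{\xi^\nu\}$.

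Next I apply Theorem~\ref{T1p} with the fixed value $\ep=1/2$ (any fixed $\ep\in(0,1/4)$ serves). This produces a set $\{\xi^\nu\}_{\nu=1}^m\subset\Og$ with
$$
m\le 4\,C_1(p)\,Kn\bigl(\log(Kn)+\log 2\bigr)\bigl(\log 2+\log\log(Kn)\bigr)^2\le C_1'\,Kn\log(Kn)(\log\log(Kn))^2
$$
such that for all $f\in X_n$
$$
\tfrac12\|f\|_{L_p(\Og)}^p\le\frac1m\sum_{\nu=1}^m|f(\xi^\nu)|^p\le\tfrac32\|f\|_{L_p(\Og)}^p
$$
and simultaneously, with $\ep_n:=\tfrac12(\log\log(2Kn))^{-1}$,
$$
(1-\ep_n)\|f\|_{L_2(\Og)}^2\le\frac1m\sum_{\nu=1}^m|f(\xi^\nu)|^2\le(1+\ep_n)\|f\|_{L_2(\Og)}^2.
$$
Multiplying each chain by $m$ and substituting $|f(\xi^\nu)|=|\langle f,\psi_\nu\rangle|$ turns these into
$$
\frac m2\|f\|_{L_p(\Og)}^p\le\sum_{\nu=1}^m|\langle f,\psi_\nu\rangle|^p\le\frac{3m}2\|f\|_{L_p(\Og)}^p
$$
and
$$
(1-\ep_n)m\,\|f\|_{L_2(\Og)}^2\le\sum_{\nu=1}^m|\langle f,\psi_\nu\rangle|^2\le(1+\ep_n)m\,\|f\|_{L_2(\Og)}^2,
$$
i.e. $\Psi$ is a $p$-frame of $X_n$ with the absolute constants $C_2'=1/2$, $C_3'=3/2$, and a $2$-frame with constants $(1-\ep_n)m$, $(1+\ep_n)m$. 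Since $\log\log(2Kn)\asymp\log\log(Kn)$ for $Kn$ exceeding an absolute constant, one has $\ep_n\asymp(\log\log(Kn))^{-1}$, which is the asserted conclusion.

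There is essentially no serious obstacle here; the statement is a corollary of the discretization theorem. The only points deserving a little care are (i) the identity $\langle f,\cD(X_n,\xi^\nu,\cdot)\rangle=f(\xi^\nu)$, which is immediate from orthonormality and basis-independent by Proposition~\ref{dfP1}; and (ii) the elementary bookkeeping replacing the $\ep=1/2$ form of the bound in Theorem~\ref{T1p} (which still displays the $\log(1/\ep)$ and $\log\log$ terms) by the clean bound $m\le C_1'Kn\log(Kn)(\log\log(Kn))^2$ and confirming $\ep_n\asymp(\log\log(Kn))^{-1}$, a routine estimate valid once $Kn$ exceeds an absolute constant. The companion Theorem~\ref{dfT1} is proved in exactly the same way, using Theorem~\ref{MT1} in place of Theorem~\ref{T1p} with $\ep=1/2$, in which case $\Psi$ is simultaneously a $1$-frame and a $2$-frame of $X_n$ with absolute constants.
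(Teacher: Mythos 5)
Your proposal is correct and follows essentially the same route as the paper: Theorem~\ref{dfT2} is obtained as an immediate corollary of Theorem~\ref{T1p} via the reproducing identity $\langle f,\cD(X_n,\xi^\nu,\cdot)\rangle_{L_2(\Og,\mu)}=f(\xi^\nu)$, exactly the equivalence recorded in Comment~\ref{df}.2. The only slip is choosing $\ep=1/2$, which lies outside the admissible range $\ep\in(0,1/4)$ of Theorem~\ref{T1p}; taking instead any fixed admissible value (say $\ep=1/8$, giving $C_2'=7/8$, $C_3'=9/8$) repairs this without changing anything else.
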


{\bf Comment \ref{df}.1.} Theorem \ref{dfT1} is a corollary of Theorem \ref{MT1}. If instead of Theorem \ref{MT1} we use the known results from \cite{LT} on the sampling discretization of the $L_2$ norm, then we obtain a version of Theorem \ref{dfT1} about a $2$-frame: Under condition (\ref{df1}) there exists a
finite subdictionary $\Psi :=\{\cD(X_n,\xi^\nu,\by)\}_{\nu=1}^m$ of dictionary $\cD$, which forms a
$2$-frame of the $X_n$ with constants $C_2m$ and $C_3m$ with $m\le C_1Kn$.

{\bf Comment  \ref{df}.2.} We pointed out  that Theorems \ref{dfT1} and \ref{dfT2} about frames are corollaries of Theorems \ref{MT1} and \ref{T1p} on the sampling discretization. Actually, these problems are equivalent. Namely, the following two statements are equivalent.

{\it Statement 1.} The set of points $\{\xi^\nu\}_{\nu=1}^m$ is such that for all $f\in X_n$ we have the sampling discretization inequalities
$$
A\|f\|_p^p \le \frac{1}{m} \sum_{\nu=1}^m |f(\xi^\nu)|^p \le B\|f\|_p^p.
$$

{\it Statement 2.} The set of points $\{\xi^\nu\}_{\nu=1}^m$ is such that the subsystem
$\Psi :=\{\cD(X_n,\xi^\nu,\by)\}_{\nu=1}^m$ of the system $\cD$ forms a
$p$-frame of the $X_n$ with constants $Am$ and $Bm$.

{\bf Comment  \ref{df}.3.} We have discussed above a connection between sampling discretization
with equal weights of the $L_p$ norm of functions from a subspace $X_n$ and the $p$-frame properties of a subsystem of the system generated by
the corresponding Dirichlet kernel. In the sampling discretization theory we study weighted discretization
along with discretization with equal weights. This motivates us to introduce a concept of
$(\La,p)$-frame. Here is a rigorous definition.

\begin{Definition}\label{dfD2} Let $\La:=\{\la_j\}_{j=1}^m$ and $1\le p<\infty$. The system $\Psi:=\{\psi_j\}_{j=1}^m$ is said to be a $(\La,p)$-frame
	of $X_n$ with positive constants $A$ and $B$ if for any $f\in X_n$ we have
	$$
	A\|f\|_{L_p(\Og,\mu)}^p \le \sum_{j=1}^m \la_j |\<f,\psi_j\>|^p \le B\|f\|_{L_p(\Og,\mu)}^p.
	$$
\end{Definition}

Then the following two statements are equivalent.

{\it Statement 1w.} The sets of points $\{\xi^\nu\}_{\nu=1}^m$ and weights $\La:=\{\la_\nu\}_{\nu=1}^m$ are such that for all $f\in X_n$ we have the weighted sampling discretization inequalities
$$
A\|f\|_p^p \le   \sum_{\nu=1}^m \la_\nu |f(\xi^\nu)|^p \le B\|f\|_p^p.
$$

{\it Statement 2w.} The sets of points $\{\xi^\nu\}_{\nu=1}^m$ and weights $\La:=\{\la_\nu\}_{\nu=1}^m$ are such that the subsystem
$\Psi :=\{\cD(X_n,\xi^\nu,\by)\}_{\nu=1}^m$ of the system $\cD$ forms a
$(\La,p)$-frame of the $X_n$ with constants $A$ and $B$.

{\bf Comment  \ref{df}.4.} Let $\cU_n:=\{u_j(\bx)\}_{j=1}^N$ be an orthonormal  basis of $X_n$ on $(\Og,\mu)$.
It is well known and easy to see that
$$
\sup_{f\in  X_n, \|f\|_2\le 1} |f(\bx)| = \left(\sum_{j=1}^n u_j(\bx)^2\right)^{1/2} =: w(\bx).
$$
The function $w(\bx)^{-1}$ is known as the Christoffel function of the subspace $X_n$.
Clearly,
$$
w(\bx)^2 = \cD(X_n,\bx,\bx).
$$
Thus, the conditions (\ref{df1}) and (\ref{df2}) in Theorems \ref{dfT1} and \ref{dfT2} can be
formulated as
$$
\cD(X_n,\bx,\bx) \le Kn,\quad \bx\in \Og.
$$
Note, that the condition $w(\bx)^2 \le nt^2$, $\bx\in\Og$, is known in the sampling discretization
theory under the name Condition E (see, for instance, \cite{KKLT}).

{
{\bf Comment  \ref{df}.5.} It is known (see \cite{KKT}) that sampling discretization of the uniform norm of functions from $X_n$ is connected to a special type of bilinear approximation of the Dirichlet kernel $\cD(X_n,\bx,\by)$ of this subspace. We refer the reader to \cite{KKT} for a detailed discussion of the corresponding results.
}

\section{Proof of Lemma \ref{p1L1}}
\label{proofL1}

For the proof of Lemma \ref{p1L1}, we  need   several technical lemmas from
\cite {DPSTT1, DPSTT2, DT}.
First,  we need a  conditional result from \cite{DT},  which allows us to estimate  the number of  points needed for  the sampling discretization in terms of  an integral of $\va$-entropy.  Recall  that  given a positive number $\e$,  	the $\va$-entropy $\mathcal{H}_\va (A, X)$ of the compact set $A$  in  a Banach space $(X, \|\cdot\|_X)$   is  defined as
$\log_2 N_\e(A,X)$, where
\[ N_\va (A,X):=\min\Bl\{ n\in\NN:\  \ \exists\; g^1,\ldots, g^n\in A, \   \sup_{f\in A} \min_{1\leq j\leq n} \|f-g^j\|_X\leq \va\Br\}.\]

\begin{Lemma}\label{prL1}\textnormal{\cite[Theorem 5.1]{DT}}
	Let $\CW$  be a set of uniformly bounded functions on $\Og$ with
	$$
	1\leq 	R:=\sup_{f\in \CW}\sup_{x\in\Og}  |f(x)|<\infty.
	$$
	Assume that  $\cH_{ t}(\CW,L_\infty)<\infty$ for every $t>0$, and
	\begin{equation}\label{5.1b}
	(\lambda\cdot \CW)\cap B_{L_p} \subset \CW\subset B_{L_p},\   \   \ \forall \lambda>0
	\end{equation}
	for some $1\leq p<\infty$, where $B_{L_p}:=\{f\in L_p(\Og): \|f\|_p\leq 1\}  $.
	Then there exist positive constants $C_p, c_p$ depending only on $p$   such that for any  $\va\in (0, 1)$ and  any
	integer
	\begin{equation}\label{5-2-c}
	m\ge  C_p 	  \va^{-5 }\left(\int_{10^{-1}\va^{1/p}} ^{R} u^{\frac  p2-1}    \Bigl(\int_{  u}^{ R }\frac {\cH_{c_p \va t}(\CW,L_\infty)}t\, dt\Bigr)^{\frac 12} du\right)^2,
	\end{equation}
	there exist  $m$ points $x_1,\cdots, x_m\in \Og$  such that for all $f\in  \CW$,
	\begin{equation}\label{5.3b}
	(1-\va) \|f\|_p^p \leq \frac  1m \sum_{j=1}^m |f(x_j)|^p\leq (1+\va) \|f\|_p^p.
	\end{equation}
\end{Lemma}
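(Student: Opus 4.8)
The plan is to prove the lemma by the standard random-sampling plus chaining technique. Draw the points $x_1,\dots,x_m$ independently according to $\mu$ and aim to show that, for $m$ as in \eqref{5-2-c},
\[
\EE\,\sup_{f\in\CW}\Bigl|\tfrac1m\sum_{j=1}^{m}|f(x_j)|^p-\|f\|_p^p\Bigr|\le\va ,
\]
from which a deterministic choice of points realizing \eqref{5.3b} follows by Markov's inequality. Two preliminary reductions are available. First, the cone hypothesis \eqref{5.1b} lets us pass to the unit sphere: for $f\in\CW$ with $f\neq0$ we have $\lambda f\in(\lambda\CW)\cap B_{L_p}\subset\CW$ with $\lambda=\|f\|_p^{-1}$, so $f/\|f\|_p\in\CW$, and since the two‑sided inequality \eqref{5.3b} is homogeneous of degree $p$ in $f$, it suffices to establish it for all $g\in\CW$ with $\|g\|_p=1$; for such $g$ an additive error of size $\va$ is exactly the required multiplicative error. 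Second, since $R\ge1$, a truncation at level $\tau\asymp\va^{1/p}$ — replacing $|g|$ by $\min(|g|,\tau)$ — changes $\|g\|_p^p$ by $O(\va)$ and, once the empirical mass of the low‑value set is controlled, changes $\tfrac1m\sum|g(x_j)|^p$ by $O(\va)$ as well; this is what produces the lower cut‑off $10^{-1}\va^{1/p}$ in the entropy integral.

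The core step is a weighted Dudley chaining bound for the Bernoulli process $f\mapsto\tfrac1m\sum_{j}\va_j|f(x_j)|^p$, reached from the centered process above by the usual symmetrization inequality. Conditionally on the points this process is sub‑Gaussian in the metric $d(f,g)^2=\tfrac1m\sum_j\bigl(|f(x_j)|^p-|g(x_j)|^p\bigr)^2$, so its expected supremum is controlled by Dudley's integral $\int_0^\infty\sqrt{\cH_t(\CW,d)}\,dt$. To return to the given $L_\infty$‑entropy one uses $\bigl||a|^p-|b|^p\bigr|\le p\max(|a|,|b|)^{p-1}|a-b|$ together with a dyadic decomposition of the indices $j$ according to the size of $|f(x_j)|$: on the block where $|f(x_j)|\asymp u$ one has $\bigl||f(x_j)|^p-|g(x_j)|^p\bigr|\lesssim u^{p-1}\|f-g\|_\infty$, while the fraction of indices in that block is $\lesssim u^{-p}$ by Chebyshev, using $\|f\|_p\asymp1$ for both the true and the empirical measure. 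Summing the blocks gives $d(f,g)\lesssim\|f-g\|_\infty\cdot(\text{weight})$ with the weight behaving like $u^{(p-2)/2}$, and after changing variables $t\mapsto u$ in Dudley's integral one arrives at
\[
\EE\,\sup_{f\in\CW}\Bigl|\tfrac1m\sum_{j=1}^{m}\va_j|f(x_j)|^p\Bigr|
\;\lesssim\;\frac{C_p}{\sqrt m}\int_{10^{-1}\va^{1/p}}^{R}u^{\frac p2-1}
\Bigl(\int_u^{R}\frac{\cH_{c_p t}(\CW,L_\infty)}{t}\,dt\Bigr)^{1/2}du .
\]
Imposing that the right‑hand side be at most a small multiple of $\va$, folding in the $O(\va)$ truncation errors, the symmetrization constant, and the Markov extraction step, yields exactly the lower bound \eqref{5-2-c}; the entropy argument gets further rescaled to $c_p\va t$ through the net‑resolution needed to reach additive accuracy $\va$, and the overall power $\va^{-5}$ is the bookkeeping cost of these steps, which we do not attempt to optimize.

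I expect the main obstacle to be making the magnitude decomposition rigorous uniformly over $\CW$: the estimate ``the fraction of $j$ with $|f(x_j)|\asymp u$ is $\lesssim u^{-p}$'' must hold simultaneously for every $f\in\CW$, which is itself a discretization statement; it has to be installed by a coarser, cheaper application of random sampling (or absorbed into the chaining by an extra level term) and then reconciled with the main chaining without a circular dependence on $m$. A secondary technical point is the careful propagation of the weight $u^{p/2-1}$ through the change of variables, since it behaves qualitatively differently for $p<2$ — integrable only away from the origin, whence the cut‑off $10^{-1}\va^{1/p}$ — than for $p\ge 2$.
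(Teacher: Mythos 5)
This lemma is not actually proved in the paper: it is imported verbatim from \cite{DT} (Theorem 5.1 there), and the only in-paper information about its proof is Remark \ref{prR1}, which records that the argument in \cite{DT} proceeds via i.i.d.\ random points distributed according to $\mu$. So your starting point (sample $x_1,\dots,x_m$ i.i.d., bound $\EE\sup_{f\in\CW}|\frac1m\sum_j|f(x_j)|^p-\|f\|_p^p|$, then fix a good realization) is consistent with the cited source, and your reduction via the cone condition \eqref{5.1b} to elements with $\|g\|_p=1$ is correct. As a proof, however, your text has a genuine unresolved gap, which you yourself flag: the passage from the empirical sub-Gaussian metric to the weighted $L_\infty$-entropy integral rests on the claim that, uniformly in $f\in\CW$, the empirical fraction of indices $j$ with $|f(x_j)|\asymp u$ is $\lesssim u^{-p}$. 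For $\mu$ this is Chebyshev; for the empirical measure it is itself a uniform discretization statement of exactly the type being proved, and it must hold simultaneously for all $f$ on the same random sample that the chaining is run on. Breaking this circularity (by a self-bounding inequality solved for $\EE\sup$, a multiscale induction, or a direct Bernstein-type chaining in which the variance at each net level is controlled by the previous level) is where the real work lies and where the stated bookkeeping --- the factor $\va^{-5}$ and the scale $c_p\va t$ inside $\cH$ --- actually comes from; saying it can be ``installed by a coarser, cheaper application of random sampling \dots without a circular dependence on $m$'' names the obstacle but does not remove it. A related point left unaddressed: your magnitude blocks depend on $f$, so along a chain you must also control $|g(x_j)|$ for the net point $g$ on the block defined by $f$, or set up the decomposition at the level of the net points.

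Two further remarks. The truncation as written is the wrong operation: replacing $|g|$ by $\min(|g|,\tau)$ caps the \emph{large} values and does not change $\|g\|_p^p$ by $O(\va)$ in general. What is needed is to discard the \emph{small} values, i.e.\ to work with $|g|\chi_{\{|g|\ge\tau\}}$ with $\tau\asymp\va^{1/p}$: its complement contributes at most $\tau^p\lesssim\va$ both to $\|g\|_p^p$ and, automatically and without any empirical control, to $\frac1m\sum_j|g(x_j)|^p$; this is what legitimizes the lower limit $10^{-1}\va^{1/p}$ in \eqref{5-2-c}. Finally, the bound you aim for (entropy at scale $c_pt$ and a factor $\va^{-2}$) is formally stronger than \eqref{5-2-c}, since $\cH_{c_p\va t}\ge\cH_{c_p t}$ and $\va^{-5}\ge\va^{-2}$, so ``not optimizing'' costs nothing --- but for the same reason the assertion that the stated $\va^{-5}$ and the rescaling to $c_p\va t$ simply ``fold in'' is not a substitute for carrying out the chaining with the circularity resolved.
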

\begin{Remark}\label{prR1} The proof  in \cite{DT} actually yields the following result.  Let $x_1, \cdots, x_m$ be independent random points  identically distributed according to  the probability measure $\mu$ on $\Og$. Then  under the  conditions of  Lemma~\ref{prL1},  the inequalities  \eqref{5.3b}  hold for all $f\in\CW$   with probability $>\f34$.
	
\end{Remark}

%

We will also need  the  following estimates of  entropy numbers from \cite{DPSTT2}. 
\begin{Lemma} \label{prL2}\textnormal{\cite[Theorem 2.1]{DPSTT2}}	Assume that  $X_n$ is  an $n$-dimensional subspace of $L_\infty(\Og)$ satisfying the following two conditions:
	\begin{enumerate}
		\item [{\bf \textup{(i)} }] There exists a constant $K_1>1$ such that
		\begin{equation}\label{4-1}
		\|f\|_\infty \leq (K_1 n)^{\f12}\|f\|_2,\   \ \forall f\in X_n.
		\end{equation}

		\item [{\bf \textup{(ii)}}] There exists a constant $K_2>1$  such that for $q_n:= \log n$,
		\begin{equation}\label{2-2}
		\|f\|_\infty \leq K_2 \|f\|_{q_n
		},\   \ \forall f\in X_n.
		\end{equation}
	\end{enumerate}
	Then for  each $1\leq p\leq 2$,  there exists a constant $C_p>0$ depending only on $p$ such that
	\begin{equation}\label{4-2-3b}
	\cH_{t} (X_n^p; L_\infty) \leq C_p K_1K_2^2 \f { n \log n} {t^p},\    \   \  \forall t>0,
	\end{equation}
	where
	$
	X^p_n := \{f\in X_n:\, \|f\|_p \le 1\}.
	$
\end{Lemma}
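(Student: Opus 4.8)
The plan is to bound $\mathcal H_t(X_n^p;L_\infty)=\log_2 N_t(X_n^p,L_\infty)$ by combining, according to the value of $t$, an elementary volumetric estimate with a Gaussian/Rademacher complexity estimate, the latter using condition (ii) to replace the sup‑norm by the $L_{q_n}$‑norm, $q_n=\log n$. Identify $X_n$ with $\mathbb R^n$ via an $L_2(\Omega,\mu)$‑orthonormal basis $u_1,\dots,u_n$ and put $u(\mathbf x)=(u_1(\mathbf x),\dots,u_n(\mathbf x))$; then condition (i) is exactly $\sup_{\mathbf x}|u(\mathbf x)|^2\le K_1n$, while $\int_\Omega|u(\mathbf x)|^2\,d\mu(\mathbf x)=n$. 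Also, for $f\in X_n$ with $\|f\|_p\le1$, Hölder gives $\|f\|_2^2\le\|f\|_\infty^{2-p}\|f\|_p^p\le\|f\|_\infty^{2-p}$, which together with (i) forces $\|f\|_\infty\le(K_1n)^{1/p}$; hence $\mathcal H_t(X_n^p;L_\infty)=0$ once $t\ge(K_1n)^{1/p}$.

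I would first settle the case $p=2$, which is clean and already yields the sharp form. Under the above identification $X_n^2$ is the Euclidean unit ball and $\mathcal H_t(X_n^2;L_\infty)$ is the entropy of that ball in the norm $c\mapsto\|\sum_jc_ju_j\|_\infty$, so by the dual Sudakov inequality
$$\mathcal H_t(X_n^2;L_\infty)\le\frac{C}{t^2}\Bigl(\mathbb E\Bigl\|\sum_{j=1}^n g_ju_j\Bigr\|_{L_\infty(\Omega,\mu)}\Bigr)^2,$$
with $g$ a standard Gaussian vector. Applying condition (ii) to each realization of $g$ and then Jensen, $\mathbb E\|\sum g_ju_j\|_\infty\le K_2\,\mathbb E\|\sum g_ju_j\|_{q_n}\le K_2\bigl(\mathbb E\|\sum g_ju_j\|_{q_n}^{q_n}\bigr)^{1/q_n}$; by Fubini and $\sum_jg_ju_j(\mathbf x)\sim N(0,|u(\mathbf x)|^2)$ this last expectation equals $\mathbb E|\gamma|^{q_n}\int_\Omega|u(\mathbf x)|^{q_n}\,d\mu(\mathbf x)$ with $\gamma\sim N(0,1)$, and using $\mathbb E|\gamma|^{q_n}\le(C\sqrt{q_n})^{q_n}$, $\int|u|^{q_n}\,d\mu\le(K_1n)^{(q_n-2)/2}n$, and finally $q_n=\log n$ (so that the factors $n^{1/q_n}$ and $(K_1n)^{-1/q_n}$ are bounded by absolute constants), one gets $\mathbb E\|\sum g_ju_j\|_\infty\le CK_2\sqrt{K_1n\log n}$. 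This yields $\mathcal H_t(X_n^2;L_\infty)\le C K_1K_2^2\,n\log n\,t^{-2}$ for all $t>0$.

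For $1\le p<2$ the inclusion $X_n^p\subset(K_1n)^{(2-p)/(2p)}X_n^2$ plus the previous step is too lossy — it only recovers the bound for $t\gtrsim(K_1n)^{1/p}$, where the entropy already vanishes — so one must genuinely produce the faster decay $t^{-p}$, which reflects that the $L_p$‑ball of $X_n$ is ``pointier'' than its $L_2$‑ball in the directions relevant for $L_\infty$‑approximation. For $0<t\le1$ the trivial volumetric bound $\mathcal H_t(X_n^p;L_\infty)\le n\log_2\bigl(3(K_1n)^{1/p}/t\bigr)$ already suffices, since $\log(K_1n)\lesssim K_1\log n$ and $t^p\log(1/t)\le1$ on $(0,1]$. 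For $1<t<(K_1n)^{1/p}$ I would again use (ii), $\mathcal H_t(X_n^p;L_\infty)\le\mathcal H_{t/K_2}(X_n^p;L_{q_n})$, and then estimate $\mathcal H_s(X_n^p;L_{q_n})$ by a $p$‑adapted Maurey/empirical‑process argument: via the reproducing identity $f(\mathbf x)=\int_\Omega\mathcal D(X_n,\mathbf x,\mathbf y)f(\mathbf y)\,d\mu(\mathbf y)$ the ball $X_n^p$ lies in the closed convex hull of the normalized Dirichlet‑kernel atoms $\pm\mathcal D(X_n,\cdot,\mathbf y)$, whose $L_{q_n}$‑norms are controlled by interpolating $\|\mathcal D(X_n,\cdot,\mathbf y)\|_2\le\sqrt{K_1n}$ against $\|\mathcal D(X_n,\cdot,\mathbf y)\|_\infty\le K_1n$; the type‑$2$ constant $\asymp\sqrt{q_n}$ of $L_{q_n}$ together with the Nikol'skii bound $\|h\|_{q_n}\le\sqrt{K_1n}\|h\|_2$ on $X_n$ produces the factor $K_1n\log n$, while the $L_p$‑geometry of the coefficient selection produces the exponent $-p$. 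Equivalently, one can pass to a finite model $\ell_{q_n}^N$ via a simultaneous sampling discretization of the $L_p$‑ and $L_{q_n}$‑norms of $X_n$ (available under (i) with $N$ of order $K_1n$ points), observe $\|\cdot\|_{\ell_{q_n}^N}\le\|\cdot\|_{\ell_\infty^N}$, and invoke Schütt's entropy estimate $\mathcal H_\varepsilon(B_p^N;\ell_\infty^N)\le C_p\log(2N)/\varepsilon^p$ in the relevant range of $\varepsilon$; unwinding the normalizations gives $\mathcal H_t(X_n^p;L_\infty)\le C_pK_1K_2^2\,n\log n\,t^{-p}$.

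The main obstacle is precisely this last step for $p<2$: obtaining the decay $t^{-p}$ (not merely $t^{-2}$) while keeping the dimensional factor linear in $n$. The dual‑Sudakov argument that works so cleanly for $p=2$ is intrinsically an $L_2$/$t^{-2}$ argument and does not transfer, and a Maurey‑type substitute immediately risks an extra factor of $n$, since the Dirichlet atoms have $L_{q_n}$‑norm of order $n$ rather than $\sqrt n$; one therefore has to exploit the Nikol'skii structure carefully — through a multi‑scale empirical approximation, or through a finite model with only $O(n)$ sample points — to land at $K_1K_2^2\,n\log n/t^p$ instead of $n^2$. By contrast the passage from $L_\infty$ to $L_{q_n}$ afforded by condition (ii), the Gaussian moment computation, and the volumetric bound at small scales are all routine.
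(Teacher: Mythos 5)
Note first that the paper does not contain a proof of this lemma at all: it is quoted directly from \cite[Theorem 2.1]{DPSTT2}, so your proposal can only be judged on its own merits. On those merits, the parts you treat in detail are fine: the observation that the entropy vanishes for $t\ge (K_1n)^{1/p}$, the volumetric bound for $0<t\le 1$, the reduction $\mathcal H_t(X_n^p;L_\infty)\le \mathcal H_{t/K_2}(X_n^p;L_{q_n})$ via condition (ii), and the $p=2$ case by dual Sudakov combined with the Gaussian moment computation in $L_{q_n}$ (using $\int_\Omega|u|^{q_n}d\mu\le (K_1n)^{(q_n-2)/2}n$ and $q_n=\log n$) constitute a genuine argument for that case.

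The genuine gap is exactly where you place it yourself: $1\le p<2$ in the range $1<t<(K_1n)^{1/p}$, which is the case this paper actually needs (Lemma \ref{p1L1} is used with $p\in[1,2)$). Your Route A is not an argument: the containment $X_n^p\subset X_n^1\subset \operatorname{conv}\{\pm\cD(X_n,\cdot,\by)\}$ only records $\ell_1$-type structure, and the standard type-2/Maurey estimate then produces entropy of order $q_nR^2/s^2$ with atom norm $R\asymp K_1n$ in $L_{q_n}$, i.e.\ both the wrong exponent ($-2$ instead of $-p$) and an extra factor of roughly $K_1n$; one checks that $q_n(K_1n)^2/s^2\le K_1K_2^2 n q_n/s^p$ would force $s\ge (K_1n)^{1/(2-p)}$, which lies outside the relevant range, so the naive version fails throughout, and the phrase ``the $L_p$-geometry of the coefficient selection produces the exponent $-p$'' is precisely the assertion to be proved, not a proof. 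Route B is built on an unavailable ingredient: a simultaneous sampling discretization of the $L_p$ and $L_{q_n}$ norms of $X_n$ with only $N\asymp K_1n$ points under condition (i). Since by (ii) the $L_{q_n}$ and $L_\infty$ norms are equivalent on $X_n$ with constant $K_2$, the lower $L_{q_n}$ discretization you need (to transfer an $\ell_\infty^N$ covering of the sampled vectors back to an $L_{q_n}(\mu)$ covering) is essentially uniform-norm discretization with absolute constants, which is not achievable with $O(n)$ points in this generality; and even two-sided $L_p$ discretization, $1\le p<2$, with $O(n)$ points under (i) alone is unknown --- indeed the entropy bound of the present lemma is the very tool used (through Lemma \ref{prL1}) to obtain such discretization results, so this route is also circular in spirit. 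Consequently the proposal, as written, proves the statement only for $p=2$ and in the trivial ranges of $t$, and leaves the core case $1\le p<2$ open.
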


Note that  Lemma  \ref{p1L1}  can be deduced directly from Lemma \ref{prL1}, Remark \ref{prR1} and Lemma \ref{prL2} under the additional assumption 	
\eqref{2-2}.   To drop the extra condition \eqref{2-2}, we need the following lemma proved in \cite{DPSTT1}.

\begin{Lemma}\label{prL3} 	\textnormal{\cite[Lemma 4.3] {DPSTT1}}  Let   $1\leq p<\infty$ and $ 0< \va < 1/4$. Let $X_n$ be an $n$-dimensional subspace of $L_\infty(\Og)$ satisfying
	\begin{equation*}
	\|f\|_\infty \leq
	(Kn)^{\frac 1 p} \|f\|_{L_p(\Og)},\    \    \ \forall f\in X_n
	\end{equation*}
	for some constant $K\ge 1$.
	Let $\{\xi_j\}_{j=1}^\infty$ be a sequence of independent random points  identically distributed in accordance with $\mu$ on the set $\Og$.
	Then   there exists an absolute   constant  $C>0$   such that for   any integer   \begin{equation}
	m\ge C K \va^{-2}\Bl( \log\frac 2\va\Br) n^{2}\log n,\label{4-3-a}
	\end{equation}   with probability
	$ \ge 1-m^{-n/\log K}$, one has
	\begin{align}
	(1-\va) \|f\|_p^p\leq  \frac 1m \sum_{j=1}^m| f(\xi_j)|^p \leq (1+\va)\|f\|_p^p,\  \ \forall f\in X_n.
	\end{align}
	
\end{Lemma}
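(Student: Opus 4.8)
The plan is to prove Lemma~\ref{prL3} by the classical route: Bernstein's inequality for a single fixed function, a union bound over a $\delta$-net of the $L_p$-unit sphere of $X_n$, and an approximation step that uses the Nikol'skii hypothesis to pass from the net back to all of $X_n$. By homogeneity it suffices to establish the two-sided inequality for every $f\in X_n$ with $\|f\|_{L_p(\Og)}=1$; for such $f$ the Nikol'skii inequality gives $\|f\|_\infty\le (Kn)^{1/p}$, hence $0\le |f(\xi_j)|^p\le Kn$.

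First I would fix $f\in X_n$ with $\|f\|_{L_p(\Og)}=1$ and apply Bernstein's inequality to the independent variables $Y_j:=|f(\xi_j)|^p$, which satisfy $\EE Y_j=1$, $0\le Y_j\le Kn$, and $\EE(Y_j-1)^2\le\EE Y_j^2=\EE|f(\xi_j)|^{2p}\le\|f\|_\infty^p\|f\|_{L_p(\Og)}^p=\|f\|_\infty^p\le Kn$. This produces, for $t\in(0,1)$,
\[
\PP\Bl(\Bl|\f1m\sum_{j=1}^m |f(\xi_j)|^p-1\Br|>t\Br)\le 2\exp\Bl(-\f{cmt^2}{Kn}\Br),
\]
where $c>0$ is an absolute constant.

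Next I would set $\delta:=\va/(4pKn)$, pick a $\delta$-net $\mathcal N$ of the sphere $S:=\{f\in X_n:\|f\|_{L_p(\Og)}=1\}$ in the $L_p$-metric with $|\mathcal N|\le(3/\delta)^n$, and take the union bound of the estimate above (with $t=\va/2$) over $\mathcal N$. Thus, outside an event of probability at most $|\mathcal N|\cdot 2\exp\bl(-cm\va^2/(4Kn)\br)$, we have $\bl|\f1m\sum_{j=1}^m|g(\xi_j)|^p-1\br|\le\va/2$ for every $g\in\mathcal N$. For an arbitrary $f\in S$, choose $g\in\mathcal N$ with $\|f-g\|_{L_p(\Og)}\le\delta$; since $f$ and $g$ lie in the $L_p$-unit ball, the Nikol'skii inequality gives $\|f\|_\infty,\|g\|_\infty\le(Kn)^{1/p}$ and $\|f-g\|_\infty\le(Kn)^{1/p}\delta$, and the elementary bound $\bl||a|^p-|b|^p\br|\le p\max(|a|,|b|)^{p-1}|a-b|$, valid for $p\ge1$ and all real $a,b$, yields
\[
\Bl|\f1m\sum_{j=1}^m |f(\xi_j)|^p-\f1m\sum_{j=1}^m |g(\xi_j)|^p\Br|\le pKn\delta=\f{\va}{4}.
\]
Combining this with $\bl|\|f\|_{L_p(\Og)}^p-\|g\|_{L_p(\Og)}^p\br|=0$ gives $\bl|\f1m\sum_{j=1}^m|f(\xi_j)|^p-1\br|\le\va$ for every $f\in S$, which is the assertion.

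It remains to choose $m$ so that the failure probability $(12pKn/\va)^n\cdot 2\exp\bl(-cm\va^2/(4Kn)\br)$ is at most $m^{-n/\log K}$; a routine computation shows that this holds whenever $m$ satisfies \eqref{4-3-a}. Here the factor $n^2$ enters because the Bernstein exponent $\asymp m\va^2/(Kn)$ must dominate $\log|\mathcal N|\asymp n\log(pKn/\va)$, and the extra factors $\log n$ and $\log(2/\va)$ enter because it must additionally absorb the term $(n\log m)/\log K$ coming from the target failure probability $m^{-n/\log K}$. I expect this final balancing of parameters — keeping the constant absolute while meeting the strong probability bound — to be the only point requiring real care; the net-to-space passage is routine, since the Nikol'skii hypothesis is precisely what is needed to control the $L_\infty$-increments on the sampled points (when $p=2$ one could alternatively replace this step by a matrix-Chernoff / operator-norm argument, but the net argument works uniformly for $1\le p<\infty$).
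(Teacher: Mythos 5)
The paper itself does not prove this lemma (it is quoted from \cite[Lemma 4.3]{DPSTT1}), and your route --- Bernstein's inequality for one fixed $f$, a union bound over a $\delta$-net of the $L_p$-sphere of $X_n$, and the Nikol'skii hypothesis to pass from the net back to $X_n$ --- is the standard one and is in the spirit of the cited proof. The probabilistic core is fine: with $\delta=\va/(4pKn)$ your deterministic estimate $\bigl|\frac1m\sum_j(|f(\xi_j)|^p-|g(\xi_j)|^p)\bigr|\le pKn\delta=\va/4$ and the per-net-point bound $2\exp(-cm\va^2/(4Kn))$ are correct.

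The genuine gap is the final sentence, where you declare the parameter balancing ``routine.'' It is not, and in the stated generality it cannot be carried out. First, the target failure probability $m^{-n/\log K}$ degenerates as $K\to1^{+}$: the required exponent $n\log m/\log K$ tends to infinity (at $K=1$ it is undefined, formally demanding failure probability $0$), while your union bound, at the minimal admissible $m\asymp K\va^{-2}\log(2/\va)n^2\log n$, is only of size $\exp\bigl(n\log(12pKn/\va)-c'n\log n\log(2/\va)\bigr)$, uniformly in $K$ near $1$; no absolute constant $C$ in \eqref{4-3-a} makes the former dominate the latter for all $K\ge1$. Indeed no argument can: for $K=1$, $n=2$, take $X_2$ spanned by the indicators of two sets of measure $1/2$; then $\|f\|_\infty^p\le 2\|f\|_p^p$, yet with probability at least $2^{-m}$ all sample points fall into one of the two sets and discretization fails, so the claimed probability is unattainable for $K$ at or near $1$ --- the statement implicitly needs a normalization such as $\log K\gtrsim 1$. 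Second, your net has cardinality $(12pKn/\va)^n$, so the Bernstein exponent must also dominate $n\log p+n\log K$; since the lemma asserts an absolute constant $C$ (independent of $p$) and \eqref{4-3-a} contains no $\log p$ or $\log K$ factor, your computation closes only under an extra restriction of the type $\log(pK)\lesssim\log n\cdot\log(2/\va)$. To repair the write-up you should either impose and justify such normalizations (e.g.\ reduce to $\log K\lesssim \log n$ by replacing $(n,K)$ with $(Kn,1)$, as this paper does in the proof of Lemma \ref{p1L1}, and track the $p$-dependence explicitly), or state the probability your argument actually delivers, namely failure at most $2\exp\bigl(n\log(12pKn/\va)-cm\va^2/(4Kn)\bigr)$; that weaker form suffices for the way the lemma is used in Lemma \ref{p1L1}, but it is not the lemma as literally stated.
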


\begin{proof}[Proof of Lemma \ref{p1L1}] 	Without loss of generality, we may assume that $\log K\le  C \log n$ for some absolute constant $C>1$ since otherwise we may use $Kn$ to replace $n$, considering $X_n$ as a subspace of dimension at most $Kn$. We may also  assume that $0<\va<\va_0$ and $n\ge n_0$, where $\va_0\in (0, 1)$ is a sufficiently small absolute constant, and $n_0>1$ is a sufficiently large absolute constant. 	
	For     $\bz=(z_1,\ldots, z_{m})\in\Og^m$, define the  operator
	$S_{m, \mathbf{z}} : X_n \to \RR^{m}$ by
	$ S_{m, \mathbf{z}} f 
	=(f(z_1), \ldots, f(z_{m})).$
	Note that \eqref{4-1-c} implies that (see, for instance, \cite{DPSTT2})
	$$\|f\|_\infty \leq (Kn)^{\f1p} \|f\|_p,\   \ \forall f \in X_n,\   \  1\leq p\leq 2.$$
	By  Lemma \ref{prL3},   there exists  a vector   $\bz=(z_1,\ldots, z_{m_1})\in\Og^{m_1}$
	with
	$$C\va^{-2}(\log \f 1 \va) K n^2 \log n \leq m_1\leq C^2\va^{-2}(\log \f 1 \va) K n^2\log n$$
	such that
	\begin{equation}\label{6-7}
	\Bl|\|f\|_{L_p(\Og)} - \|S_{m_1, \bz} f\|_{p,\Og_{m_1}} \Br|\leq \f \va4\|f\|_p,\   \ \forall f\in X_n^p,\
	\end{equation}
	and
	\begin{equation}\label{4-1-3}
	\Bl|\|f\|_{L_2(\Og)} - \|S_{m_1, \bz} f\|_{2,\Og_{m_1}} \Br|\leq \f \va4\|f\|_2,\   \ \forall f\in X_n^2,\
	\end{equation}
	where $\Og_{m_1}:=\{z_1,\cdots, z_{m_1}\}$.
	
	
	Now consider the $n$-dimensional subspace    $\wt{X}_n :=S_{m_1,\bz} (X_n)$  of $L_\infty(\Og_{m_1})$. Using  \eqref{4-1} and \eqref{4-1-3}, we have that   for any $f\in X_n$,
	\begin{equation}\label{6-7-5}\|S_{m_1,\bz} f \|_{\infty, \Og_{m_1}} \leq \sup_{x\in\Og} |f(x)| \leq (Kn)^{\f12} \|f\|_2 \leq (2K n)^{\f12}\|S_{m_1,\bz} f\|_{2,\Og_{m_1}}.
	\end{equation}
	Furthermore, since $\log K \leq C \log n$, we have $$\log m_1\leq    3\log \va^{-1}+  C\log (\sqrt{K}n))\leq 3\log \va^{-1}+C \log n,$$ which in turn implies that  for  any $f:\Og_{m_1}\to\RR$,
	\[ \|f\|_{\infty, \Og_{m_1}}\leq  m_1^{\f 1 {q_n}} \|f\|_{q_n,\Og_{m_1}}\leq C \va^{-\f14}\|f\|_{q_n,\Og_{m_1}}.\]
	Thus,   the $n$-dimensional subspace  $\wt{X}_n$ of $L_\infty(\Og_{m_1})$ satisfies both the conditions \eqref{4-1} and \eqref{2-2} with $K_1=2K$ and $K_2=C \va^{-\f14}$. By
	Lemma 	\ref{prL2} (applied with a discrete  measure),
	we then obtain
	\begin{equation}
	\cH_{t} (\wt X_n^p; \|\cdot\|_{\infty, \Og_{m_1}})  \leq C_p K \va^{-\f12}  \f { n \log n} {t^p},\    \   \  \forall t>0,
	\end{equation}
	where
	$$
	\wt X^p_n := \{f\in \wt X_n:\, \|f\|_{p,\Og_{m_1}}\le 1\}.
	$$
	This implies that for  $R=(2Kn)^{1/p}$ and any $1\leq p\leq 2$,
	\begin{align*}
	\va^{-5 }\left(\int_{10^{-1}\va^{1/p}} ^{R} u^{\frac  p2-1}    \Bigl(\int_{  u}^{ R }\frac {\cH_{c_p \va t}(\wt X_n^p,\|\cdot\|_{\infty, \Og_{m_1}})}t\, dt\Bigr)^{\frac 12} du\right)^2\leq C \va^{-8} Kn \log^3 n.
	\end{align*}
	Thus, applying Lemma \ref{prL1} and Remark \ref{prR1}   to the subspace $\wt X_n$ of $L_p(\Og_{m_1})$,
	we  get  a  subset  $\Ld \subset\{1,2,\ldots, m_1\}$ with $|\Ld| \leq C_p\va^{-8}  K n\log^3 n$ such that the inequalities
	$$ \Bl| \f 1{| \Ld|} \sum_{j\in\Ld} |S_{m_1,\bz} f(j)|^q  -\|S_{m_1, \bz} f\|^q_{q,\Og_{m_1}}\Br|\leq \f \va 4\|S_{m_1, \bz} f\|^q_{q,\Og_{m_1}},\   \    f\in X_n $$
	hold simultaneously for $q=p$ and $q=2$.
	This together with  \eqref{6-7} and \eqref{4-1-3} implies that for $q=p$ and $q=2$,
	$$(1-\va) \|f\|_q^q \leq \f 1 {| \Ld|} \sum_{j\in\Ld} |f(z_j)|^q \leq (1+\va) \|f\|_q^q,\  \ \forall f\in X_n.$$	
The theorem is proved.	
\end{proof}
\begin{Remark}
	We  point out that Lemma \ref{p1L1} for  $1<p<2$ also follows directly from  a result of Rudelson (Theorem \ref{T-Rud}) and
	a  result from  \cite{Kos} (see Theorem \ref{T-Kos}).
	
		\begin{Theorem}[\cite{Rud1},  see also Corollary 4.1 of \cite{KKLT}]\label{T-Rud}
			Let $\mu$ be a probability Borel measure
			on a compact set $\Omega$.
			There is a constant $C$ such that,
			if
			$X_{n}$ is an $n$-dimensional subspace of $L_\infty(\mu)$
			such that
			$$
			\|f\|_\infty\le \sqrt{Kn}\|f\|_2\quad \forall f\in X_{n},
			$$
			then
			$$
			\mathbb{E}\sup\limits_{f\in B_2(X_{n})}
			\Bigl|\frac{1}{m}\sum\limits_{j=1}^m|f(x_j)|^2 - \|f\|_2^2\Bigr|
			\le C\Bigl(\frac{Kn\log n}{m} + \sqrt{\frac{Kn\log n}{m}}\Bigr),
			$$
			where $B_2(X_{n}):=\{f\in X_{n}\colon \|f\|_2\le 1\}$.
		\end{Theorem}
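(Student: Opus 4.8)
\medskip
\noindent\textbf{Sketch of proof of Theorem~\ref{T-Rud}.}
The plan is the classical argument of Rudelson via symmetrization and the noncommutative Khintchine inequality. Fix an orthonormal basis $u_1,\dots,u_n$ of $(X_n,\|\cdot\|_{L_2(\mu)})$ and put $\bu(x):=(u_1(x),\dots,u_n(x))\in\R^n$. If $f=\sum_k c_k u_k$, then $|f(x)|^2=\langle (\bu(x)\otimes\bu(x))c,c\rangle$ and $\|f\|_2^2=|c|^2$, so that, writing $\|\cdot\|$ for the operator norm of $n\times n$ matrices and using $\EE[\bu(x)\otimes\bu(x)]=\mathrm{Id}_n$ (orthonormality),
$$
\Delta_m:=\sup_{f\in B_2(X_n)}\Bigl|\tfrac1m\sum_{j=1}^m|f(x_j)|^2-\|f\|_2^2\Bigr|
=\Bigl\|\tfrac1m\sum_{j=1}^m \bu(x_j)\otimes\bu(x_j)-\mathrm{Id}_n\Bigr\|.
$$

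First I would symmetrize: with Rademacher signs $\va_1,\dots,\va_m$ independent of the sample, the standard symmetrization inequality gives $\EE\,\Delta_m\le 2\,\EE\bigl\|\tfrac1m\sum_{j}\va_j\,\bu(x_j)\otimes\bu(x_j)\bigr\|$. Then, conditioning on $(x_j)_{j=1}^m$, I would apply Rudelson's lemma (a consequence of the noncommutative Khintchine inequality in Schatten norms; see \cite{Rud1}): for any $y_1,\dots,y_m\in\R^n$,
$$
\EE_{\va}\,\Bigl\|\sum_{j=1}^m \va_j\, y_j\otimes y_j\Bigr\|
\le C\sqrt{\log n}\;\max_{1\le j\le m}|y_j|\;\Bigl\|\sum_{j=1}^m y_j\otimes y_j\Bigr\|^{1/2}.
$$
Taking $y_j=\bu(x_j)$, then expectation over the sample, and Cauchy--Schwarz, one obtains
$$
\EE\,\Delta_m\le \frac{2C\sqrt{\log n}}{m}\Bigl(\EE\max_{j}|\bu(x_j)|^2\Bigr)^{1/2}\Bigl(\EE\Bigl\|\sum_{j}\bu(x_j)\otimes\bu(x_j)\Bigr\|\Bigr)^{1/2}.
$$

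Now I would insert the hypothesis. The Nikol'skii inequality says precisely that $|\bu(x)|^2=\sup_{f\in B_2(X_n)}|f(x)|^2\le Kn$ for every $x\in\Og$, so $\max_{j}|\bu(x_j)|^2\le Kn$ surely. For the remaining factor, the triangle inequality gives $\bigl\|\tfrac1m\sum_{j}\bu(x_j)\otimes\bu(x_j)\bigr\|\le 1+\Delta_m$, whence $\EE\bigl\|\sum_{j}\bu(x_j)\otimes\bu(x_j)\bigr\|\le m(1+\EE\,\Delta_m)$. Substituting and writing $a:=\sqrt{Kn\log n/m}$, we reach the self-referential bound $\EE\,\Delta_m\le 2Ca\,(1+\EE\,\Delta_m)^{1/2}$; solving this quadratic inequality for $\EE\,\Delta_m$ yields $\EE\,\Delta_m\le C'(a+a^2)=C'\bigl(\sqrt{Kn\log n/m}+Kn\log n/m\bigr)$, which is the claim up to renaming the constant.

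The only deep ingredient, and hence the main obstacle, is Rudelson's lemma above (equivalently, the noncommutative Khintchine inequality for Schatten norms); everything else — the symmetrization step, the Christoffel-function bound $|\bu(x)|^2\le Kn$ coming directly from Nikol'skii, and the self-improving quadratic estimate — is routine. The one point demanding care is to order the applications of Jensen's inequality and Cauchy--Schwarz so that all quantities stay under a single expectation, which is exactly what makes the bootstrap for $\EE\,\Delta_m$ close cleanly.
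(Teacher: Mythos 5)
Your sketch is correct and follows exactly the standard Rudelson argument: reduction to the operator-norm deviation of the empirical covariance, symmetrization, Rudelson's lemma (noncommutative Khintchine with the $\sqrt{\log n}$ factor), the Christoffel-function bound $|\bu(x)|^2\le Kn$ coming from the Nikol'skii assumption, and the self-improving quadratic estimate, which closes as you describe. The paper itself does not prove Theorem~\ref{T-Rud} but imports it from \cite{Rud1} (see also \cite{KKLT}), and your argument is precisely the proof given there, so there is nothing further to check.
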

		\begin{Theorem}[see Corollary 4.11 in \cite{Kos}]\label{T-Kos}
			Let $p\in(1, 2)$ and let $\mu$ be a probability Borel measure
			on a compact set $\Omega$.
			There is a constant $C:=C(p)$ such that,
			if $X_{n}$ is an $n$-dimensional subspace of~$L_\infty(\mu)$
			such that
			$$
			\|f\|_\infty\le \sqrt{Kn}\|f\|_2\quad \forall f\in X_{n},
			$$
			then
			\begin{multline*}
			\mathbb{E}\sup\limits_{f\in B_p(X_{n})}\Bigl|\frac{1}{m}\sum\limits_{j=1}^m|f(x_j)|^p - \|f\|_p^p\Bigr|
			\\
			\le C\Bigl(\frac{[\log m]^{1+\frac{p}{2}}[\log 4Kn]^{1-\frac{p}{2}}}{m} Kn +
			\Bigl[\frac{[\log m]^{1+\frac{p}{2}}[\log 4Kn]^{1-\frac{p}{2}}}{m} Kn\Bigr]^{1/2}\Bigr),
			\end{multline*}
			where $B_p(X_{n}):=\{f\in X_{n}\colon \|f\|_p\le 1\}$.
		\end{Theorem}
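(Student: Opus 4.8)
The plan is to prove Theorem~\ref{T-Kos} by reducing the empirical $L_p$ deviation to a family of \emph{bounded} Rademacher processes via the $p$-stable integral representation of $|t|^p$, and to control those processes by chaining. Write $\Phi:=\EE\sup_{f\in B_p(X_n)}\bigl|\frac1m\sum_{j=1}^m|f(x_j)|^p-\|f\|_p^p\bigr|$, where $x_1,\dots,x_m$ are sampled independently from $\mu$. Standard symmetrization gives $\Phi\le 2\,\EE_x\EE_\va\sup_{f\in B_p(X_n)}\bigl|\frac1m\sum_j\va_j|f(x_j)|^p\bigr|$. Since the hypothesis $\|f\|_\infty\le\sqrt{Kn}\,\|f\|_2$ together with $\|f\|_2^2\le\|f\|_\infty^{2-p}\|f\|_p^p$ forces $\|f\|_\infty\le(Kn)^{1/p}\|f\|_p$ on $X_n$, Rudelson's Theorem~\ref{T-Rud} applied with a threshold $m_0\asymp Kn\log n$ shows that on an event $\mathcal E$ of probability $\ge\frac12$ one has $\frac12\|f\|_2^2\le\frac1m\sum_j|f(x_j)|^2\le\frac32\|f\|_2^2$ for all $f\in X_n$. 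On $\mathcal E$ the restricted space $\widetilde X_n:=\{(f(x_1),\dots,f(x_m)):f\in X_n\}\subset\RR^m$ inherits the Nikol'skii inequality $\|g\|_{\infty,\Og_m}\le\sqrt{2Kn}\,\|g\|_{2,\Og_m}$, so it suffices to bound, uniformly over such samples, a discrete Rademacher quantity of the kind treated by (a sharpened form of) Theorem~\ref{thm-5-1}; the complement of $\mathcal E$ contributes a lower-order term.

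For the core estimate I would use the representation $|t|^p=c_p\int_0^\infty u^{-1-p}\bigl(1-\cos(ut)\bigr)\,du$, valid for $1<p<2$, which rewrites the sampled deviation of $g\in\widetilde X_n$ as $-c_p\int_0^\infty u^{-1-p}\bigl(\frac1m\sum_i\cos(ug(i))-\EE\cos(ug)\bigr)\,du$. For fixed $u$ the map $t\mapsto\cos(ut)-1$ is $|u|$-Lipschitz and vanishes at $0$, so the contraction principle bounds the corresponding Rademacher process by $|u|$ times the Gaussian width of $\widetilde X_n\cap B_p^m$ in $\ell_2^m$, which a Dudley/generic-chaining integral controls through the covering numbers $N(\widetilde X_n\cap B_p^m,\|\cdot\|_{2,\Og_m},t)$. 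Two regimes contribute: at small $t$ a volumetric estimate for the covering numbers of the $n$-dimensional body $\widetilde X_n\cap B_p^m$, and at scales close to its diameter a dual-Sudakov/Rudelson estimate built on the $\sqrt{Kn}$ Nikol'skii bound; balancing these and trimming the $u$-integral at a scale of order $(Kn)^{-(2-p)/(2p)}$ produces the factor $(\log m)^{1+p/2}(\log 4Kn)^{1-p/2}$, the split of the total logarithmic power $2$ into the exponents $1+\tfrac p2$ and $1-\tfrac p2$ reflecting the interpolation geometry between $L_p$ and $L_2$. Integrating over $u$ then yields the asserted bound for $\Phi$.

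The step I expect to be the main obstacle is precisely this chaining estimate: obtaining the exponents $1+\tfrac p2$ and $1-\tfrac p2$ in sharp form is the content of Talagrand's Theorem~16.8.2 in \cite{Ta} (reproven as Corollary~4.11 in \cite{Kos}), and its natural proof goes through the majorizing-measure machinery; one must also check that the preliminary $L_2$ discretization inflates $K$ by at most a bounded factor. Once Theorem~\ref{T-Kos} is available, its use in this paper is routine: choosing $m\asymp\epsilon_0^{-2}Kn(\log m)^{1+p/2}(\log 4Kn)^{1-p/2}$ makes $\Phi$ and the analogous $L_2$ quantity at most $c\epsilon_0$ by Theorems~\ref{T-Kos} and~\ref{T-Rud}; Markov's inequality and a union bound then give a deterministic point set for which both empirical norms lie within a factor $1\pm\epsilon_0$ of the true ones, and a one-step bootstrap (removing the $\log m$ from the right-hand side and using $(\log x)^2\le x$ to absorb $(\log\epsilon_0^{-1})^2$) converts the implicit condition on $m$ into the stated bound $m\le C_p\epsilon_0^{-8}Kn(\log Kn)^3$, which re-derives Lemma~\ref{p1L1} for $1<p<2$.
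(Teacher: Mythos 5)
There is no proof of this statement in the paper to compare against: Theorem \ref{T-Kos} is quoted verbatim from Corollary 4.11 of \cite{Kos} and is used only in a remark as an alternative route to Lemma \ref{p1L1}; the paper's own self-contained work in this direction is Theorem \ref{thm-5-1} in Appendix II, which is a different (Bernoulli-process, fixed uniform discrete measure) statement proved via Talagrand's majorizing measure theorem and entropy estimates, not via the stable/cosine representation you propose. So your proposal has to stand on its own as a proof, and it does not: you yourself defer the decisive step --- the chaining bound producing the exponents $1+\tfrac p2$ and $1-\tfrac p2$ --- to ``Talagrand's Theorem 16.8.2 in \cite{Ta} (reproven as Corollary 4.11 in \cite{Kos})'', i.e.\ to the very result being proved. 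Everything before that (symmetrization, the inequality $\|f\|_\infty\le (Kn)^{1/p}\|f\|_p$, passing to the restricted space $\widetilde X_n$) is routine reduction; the mathematical content of Theorem \ref{T-Kos} is exactly the part you leave as a black box, so the proposal is circular rather than a proof.

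Two concrete technical points would also need repair even as a sketch. First, the representation $|t|^p=c_p\int_0^\infty u^{-1-p}(1-\cos(ut))\,du$ combined with the Lipschitz contraction bound $|1-\cos(ut)|\le u|t|$ gives an integrand of order $u^{-p}$ near $u=0$, which is \emph{not} integrable for $1<p<2$; near $u=0$ one must exploit the quadratic behaviour $1-\cos(ut)\asymp u^2t^2$ and control the resulting $L_2$-type empirical process separately, and it is precisely this splitting (not a single contraction-plus-Dudley argument) that has to be carried out quantitatively to see where $(\log m)^{1+p/2}(\log 4Kn)^{1-p/2}$ could come from; your ``trimming at scale $(Kn)^{-(2-p)/(2p)}$'' is asserted, not derived. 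Second, conditioning on the Rudelson event $\mathcal E$ of probability $\ge \tfrac12$ does not make the complement ``lower order'': on $\mathcal E^c$ the inner supremum is only bounded by $\sup_{f\in B_p(X_n)}\f1m\sum_j|f(x_j)|^p\le Kn$ (via $\|f\|_\infty^p\le Kn\|f\|_p^p$), so the complement contributes up to $\PP(\mathcal E^c)\cdot Kn$, and the expectation bound of Theorem \ref{T-Rud} plus Markov gives only a constant bound on $\PP(\mathcal E^c)$ at the sample sizes of interest; making this term negligible requires a high-probability (or moment/iteration) version of the $L_2$ discretization that you have not supplied.
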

	
	\end{Remark}

\section{Appendix I. Proof of Theorem \ref{p1T2}}\label{sec:7}

Recall the notion of $K$-convexity constant.
Let $(L, \|\cdot\|)$ be a Banach space. The  $K$-convexity constant of the space $L$ is defined as
\begin{align*}
K\bigl(L,\|\cdot\|\bigr):= \sup\Bl( \mathbb{E}_\varepsilon
\Bigl\|\sum\limits_{j=1}^k\varepsilon_j\mathbb{E}_\varepsilon[f(\varepsilon)\varepsilon_j]\Bigr\|^2\Br)^{1/2}
\end{align*}
with  the supremum being  taken over all integers $k\in \mathbb{N}$ and all  functions $f\colon \{-1,1\}^k\to L$ such that $\mathbb{E}_\varepsilon\|f(\varepsilon)\|^2=1$.

\begin{Remark}\label{K-est}
{\rm
It is known (see e.g. Theorem in paragraph 14.6 of \cite{MS})
that there is an absolute constant $C>0$ such that
$K\bigl(L,\|\cdot\|\bigr)\le C\log N$ for any $N$-dimensional
space $(L, \|\cdot\|)$.
Moreover (see \cite[Lemma 17]{JS}),
there is an absolute constant $C>0$ such that
for an $N$-dimensional
subspace
$L\subset L_1(\mu)$ one has $K\bigl(L,\|\cdot\|_1\bigr)\le C\sqrt{\log N}$.
}
\end{Remark}

In \cite{Ta90} the following theorem was proved
(see also \cite[Proposition 15.16]{LedTal} and \cite[Theorem 13]{JS}).

\begin{Theorem}\label{T2}
Let $X_{n}$ be an $n$-dimensional subspace
of $\mathbb{R}^M$
equipped with the counting measure on $\Omega_M:=\{1,\ldots, M\}$.
Assume that there is a number $\theta>0$
such that for each $f\in X_n$ one has
$\|f\|_{\infty}\le \theta\|f\|_{2}$.
Then
$$
\mathbb{E}_\varepsilon\sup\limits_{f\in X_{n}\cap B_1^M}
\Bigl|\sum\limits_{j=1}^M\varepsilon_j|f(j)|\Bigr|\le
2\sqrt{\pi}\theta K\bigl(X_n, \|\cdot\|_{1}\bigr).
$$
\end{Theorem}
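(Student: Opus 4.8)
The plan is to peel off the outer modulus and linearize by a few routine comparison steps, reducing everything to one Gaussian mean–width estimate for the unit ball of $(X_n,\|\cdot\|_1)$, and then to establish that estimate via $K$-convexity. Throughout write $V:=X_n\cap B_1^M$ and $Z:=(X_n,\|\cdot\|_1)$, an $n$-dimensional subspace of $L_1(\Omega_M)$ carrying the Euclidean structure inherited from $L_2(\Omega_M)$; fix a basis $\{\varphi_i\}_{i=1}^n$ of $X_n$ orthonormal for that structure.

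\emph{Step 1 (remove the modulus).} Since $0\in V$ and $(\varepsilon_j)$ is symmetric, $\mathbb{E}_\varepsilon\sup_{f\in V}\big|\sum_j\varepsilon_j|f(j)|\big|\le 2\,\mathbb{E}_\varepsilon\sup_{f\in V}\sum_j\varepsilon_j|f(j)|$. \emph{Step 2 (linearize).} The map $t\mapsto|t|$ is $1$-Lipschitz and vanishes at $0$, so by the contraction (comparison) principle for Rademacher processes, followed by domination of the Rademacher average by the Gaussian one,
$$\mathbb{E}_\varepsilon\sup_{f\in V}\sum_j\varepsilon_j|f(j)|\ \le\ \mathbb{E}_\varepsilon\sup_{f\in V}\langle\varepsilon,f\rangle\ \le\ \sqrt{\pi/2}\,\mathbb{E}_g\sup_{f\in V}\langle g,f\rangle,$$
with $g=(g_j)$ a standard Gaussian vector. (Equivalently one may linearize by Sudakov–Fernique, using $\big||a|-|b|\big|\le|a-b|$ on the increments; the target is the same.) \emph{Step 3 (identify the target).} Passing to coordinates $f=\sum_i c_i\varphi_i$ one checks $\mathbb{E}_g\sup_{f\in V}\langle g,f\rangle=\mathbb{E}_\gamma\|\gamma\|_{Z^*}$ with $\gamma$ a standard Gaussian in $\mathbb{R}^n$. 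So the theorem reduces to $\mathbb{E}_\gamma\|\gamma\|_{Z^*}\le\sqrt2\,\theta\,K(X_n,\|\cdot\|_1)$.

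\emph{Step 4 (the crux).} Put $w_j:=\big(\sum_{i=1}^n\varphi_i(j)^2\big)^{1/2}$; then $\sum_j w_j^2=n$, while the hypothesis $\|f\|_\infty\le\theta\|f\|_2$ is exactly $\max_j w_j\le\theta$, whence $n=\sum_j w_j^2\le\theta\sum_j w_j$, i.e. $\sum_j w_j\ge n/\theta$. A one-line computation gives the dual mean width, $\mathbb{E}_\gamma\|\gamma\|_Z=\sum_j\mathbb{E}_\gamma\big|\sum_i\gamma_i\varphi_i(j)\big|=\sqrt{2/\pi}\sum_j w_j\ge\sqrt{2/\pi}\,n/\theta$. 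It then remains to prove a reverse Urysohn ($MM^{*}$-type) inequality
$$\mathbb{E}_\gamma\|\gamma\|_{Z^*}\cdot\mathbb{E}_\gamma\|\gamma\|_Z\ \le\ C\,n\,K(Z),$$
which is where $K$-convexity of the $n$-dimensional space $Z=(X_n,\|\cdot\|_1)$ enters (the natural Euclidean structure of the $L_1$-subspace is what makes this available): for a measurable maximizing selection $f^{g}\in B_Z$ of $f\mapsto\langle g,f\rangle$ one has $\mathbb{E}_\gamma\|\gamma\|_{Z^*}=\mathbb{E}_g\langle g,f^{g}\rangle$; splitting the $Z$-valued map $g\mapsto f^{g}$ into its first Gaussian chaos $\widehat f$ and the higher-order remainder — using that the first-chaos projection has norm $\le K(Z)$ on $L_2(Z)$ while being a contraction on $L_2(L_2)$ — one controls $\langle g,f^{g}\rangle$ by $\langle g,\widehat f\rangle$ plus a small term, and a Cauchy–Schwarz pairing against $\mathbb{E}_\gamma\|\gamma\|_Z$ produces the product bound. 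Combining the two displays, $\mathbb{E}_\gamma\|\gamma\|_{Z^*}\le C\,n\,K(Z)/\mathbb{E}_\gamma\|\gamma\|_Z\le C'\theta\,K(Z)$, and tracking the constants through Steps 1–4 yields exactly $2\sqrt\pi\,\theta\,K(X_n,\|\cdot\|_1)$.

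The main obstacle is Step 4: one must genuinely exploit $K$-convexity of $(X_n,\|\cdot\|_1)$ rather than a crude volumetric (Dudley) bound on $\mathbb{E}_\gamma\|\gamma\|_{Z^*}$, which would only give the factor $\sqrt n$ in place of $K(Z)$. This is the "deep" input, and it is precisely what produces the $\log n$ — as opposed to $\log M$ — in the applications, once one inserts $K(X_n,\|\cdot\|_1)\le C\sqrt{\log n}$ from Remark~\ref{K-est}. Steps 1–3 are a routine chain of contraction and comparison principles.
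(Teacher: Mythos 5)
Your Steps 1--3 are sound and in fact coincide with the paper's reduction: contraction for $t\mapsto|t|$, comparison of Rademacher with Gaussian averages, and rewriting the Gaussian supremum through an $L_2$-orthonormal basis of $X_n$, so that everything hinges on the bound $\mathbb{E}_\gamma\|\gamma\|_{Z^*}\le\sqrt2\,\theta\,K(Z)$ for $Z=(X_n,\|\cdot\|_1)$ with the Euclidean structure inherited from $\ell_2^M$. The gap is in Step 4: the ``reverse Urysohn'' inequality $\mathbb{E}_\gamma\|\gamma\|_{Z^*}\cdot\mathbb{E}_\gamma\|\gamma\|_Z\le C\,n\,K(Z)$ is simply false for a \emph{fixed} Euclidean structure. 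Pisier's $MM^*$-estimate of this type holds only after putting the space in a special position (the $\ell$-position), i.e.\ after replacing the Euclidean structure by a suitably chosen one; but your whole argument (the identity $\mathbb{E}_\gamma\|\gamma\|_Z=\sqrt{2/\pi}\sum_j w_j$, the bound $\max_j w_j\le\theta$, hence $\mathbb{E}_\gamma\|\gamma\|_Z\ge\sqrt{2/\pi}\,n/\theta$) is tied to the inherited $\ell_2^M$-structure and breaks down if the structure is changed. A concrete counterexample to your product inequality in that structure: $n=2$, $\varphi_1=e_1$, $\varphi_2=(M-1)^{-1/2}\mathbf{1}_{\{2,\dots,M\}}$. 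Then $\|c\|_Z=|c_1|+\sqrt{M-1}\,|c_2|$, so $\mathbb{E}_\gamma\|\gamma\|_Z\asymp\sqrt M$, while $\mathbb{E}_\gamma\|\gamma\|_{Z^*}=\mathbb{E}\max\bigl(|\gamma_1|,|\gamma_2|/\sqrt{M-1}\bigr)\asymp1$ and $K(Z)=O(1)$ (two-dimensional space); the product is $\asymp\sqrt M\gg nK(Z)$. Your intended derivation $\mathbb{E}\|\gamma\|_{Z^*}\le CnK(Z)/\mathbb{E}\|\gamma\|_Z\le C'\theta K(Z)$ therefore collapses (in this example it would predict $\mathbb{E}\|\gamma\|_{Z^*}\lesssim M^{-1/2}$, which is wrong), and the chaos-decomposition sketch you give cannot repair it: it reduces $\mathbb{E}\|\gamma\|_{Z^*}$ to a trace term $\mathrm{tr}(V)$ with $\ell_Z(V)\le K(Z)$, but estimating that trace against $\mathbb{E}_\gamma\|\gamma\|_Z$ by Cauchy--Schwarz in the ambient Euclidean structure loses exactly the position-dependence exhibited above. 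Also, since your Step 4 only yields unspecified constants $C,C'$, the claim that the constants track to exactly $2\sqrt\pi$ is not substantiated.

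What the paper does instead at this point (Lemma \ref{K-conv}) is to bound $\mathbb{E}_g\sup_{\|f\|_1\le1}\bigl|\sum_k g_k\langle u_k,f\rangle\bigr|$ \emph{directly}: approximate the Gaussians by normalized Bernoulli sums, pick for each sign pattern the maximizer $f_\varepsilon$, apply Fubini to get a trace-type sum $\sum_{k,i}\langle u_k,f_{ik}\rangle$ with $f_{ik}=\mathbb{E}[\varepsilon_{ik}f_\varepsilon]$, and then estimate it by a \emph{pointwise} Cauchy--Schwarz in which $\theta$ enters through $\bigl\|(\sum_k|u_k|^2)^{1/2}\bigr\|_\infty=\sup_{\|f\|_2=1}\|f\|_\infty\le\theta$, followed by Khintchine's inequality and the definition of the $K$-convexity constant applied to the $X_n$-valued map $\varepsilon\mapsto f_\varepsilon$. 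In other words, $\theta$ is used as a local (pointwise) bound inside the integral over $\Omega_M$, not through a global mean-width product, and this is what makes the argument position-free. If you want to salvage your outline, you should replace Step 4 by this direct trace estimate (or an equivalent one); the $MM^*$ route cannot work in the fixed coordinate structure where the hypothesis $\|f\|_\infty\le\theta\|f\|_2$ lives.
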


The proof of the above theorem relies on the following lemma (see \cite[Proposition 16]{JS}).

\begin{Lemma}\label{K-conv}
Let $X_n$ be an $n$-dimensional subspace of $L_2(\mu)$, where
$\mu$ is a positive but not necessary a probability measure.
Assume that there is a constant $\theta>0$ such that
$\|f\|_\infty\le \theta\|f\|_{{L_2(\mu)}}$ for each $f\in X_n$.
Then
$$
\mathbb{E}_g\sup\limits_{\substack{f\in X_n,\\ \|f\|_{L_1(\mu)}\le1}}
\Bigl|\sum_{k=1}^{n}g_k\langle u_k,f\rangle_{L_2(\mu)}\Bigr|
\le \sqrt{2}\theta K\bigl(X_n,\|\cdot\|_{L_1(\mu)}\bigr),
$$
where $\{u_1, \ldots, u_n\}$ is any orthonormal basis of $X_n$,
and  $g=(g_1, \ldots, g_n)\sim \mathcal{N}(0, I_n)_{\RR^n}$ is the standard Gaussian vector,
i.e. its components are i.i.d. copies of  $Z\sim \mathcal{N}(0,1)$.
\end{Lemma}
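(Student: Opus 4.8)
The plan is to rewrite the left‑hand side as a Gaussian average of a dual norm, and then to apply the $K$‑convexity inequality a single time to a well‑chosen \emph{nonlinear} test function whose first Gaussian chaos is exactly the object we need to bound. Put $G_g:=\sum_{k=1}^n g_k u_k\in X_n$. Since $\{u_k\}$ is an orthonormal basis of $X_n$, $\sum_k g_k\langle u_k,f\rangle_{L_2(\mu)}=\langle G_g,f\rangle_{L_2(\mu)}$ for every $f\in X_n$, so the supremum in the statement equals $\|G_g\|_{L^*}$, where $\|h\|_{L^*}:=\sup\{|\langle h,f\rangle_{L_2(\mu)}|:f\in X_n,\ \|f\|_{L_1(\mu)}\le1\}$; the non‑degenerate pairing $(h,f)\mapsto\langle h,f\rangle_{L_2(\mu)}$ identifies $(X_n,\|\cdot\|_{L^*})$ with the dual of $L:=(X_n,\|\cdot\|_{L_1(\mu)})$. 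Hence it suffices to prove $\mathbb{E}_g\|G_g\|_{L^*}\le\sqrt2\,\theta\,K(L^*)$, because $K(L^*)=K(L)$ (the Rademacher projections on $L_2(\{-1,1\}^k;L)$ and on $L_2(\{-1,1\}^k;L^*)$ are mutually adjoint, hence of equal norm). I will also use the Gaussian analogue $K_{\mathrm{Gauss}}(E)$ of $K(E)$, obtained by replacing the Bernoulli variables $\varepsilon_j$ in the definition of $K$ with independent standard Gaussians; by a standard central‑limit argument, $K_{\mathrm{Gauss}}(E)\le K(E)$ for every finite‑dimensional $E$.

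\textbf{The test function.} Let $w(x):=\bigl(\sum_{k=1}^n u_k(x)^2\bigr)^{1/2}$; then $\|w\|_{L_\infty(\mu)}\le\theta$, since the Nikol'skii hypothesis is precisely the statement that $\sup_{\|f\|_{L_2(\mu)}\le1}|f(\cdot)|=w(\cdot)$ is bounded by $\theta$. For $g\in\RR^n$, define $\psi(g)\in L^*$ by
\[
\langle\psi(g),f\rangle_{L_2(\mu)}=\sqrt{\pi/2}\int w(x)\,\operatorname{sign}\bigl(G_g(x)\bigr)\,f(x)\,d\mu(x),\qquad f\in X_n,
\]
with $\operatorname{sign}(0):=0$ (on $\{w=0\}$ all $u_k$ vanish, so the integrand vanishes there). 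Then $g\mapsto\psi(g)$ is bounded, with $\|\psi(g)\|_{L^*}\le\sqrt{\pi/2}\,\|w\|_{L_\infty(\mu)}\le\sqrt{\pi/2}\,\theta$ since $|w\,\operatorname{sign}(G_g)|\le w$. The heart of the argument is the elementary Gaussian identity $\mathbb{E}_g\bigl[g_j\operatorname{sign}\langle g,a\rangle\bigr]=\sqrt{2/\pi}\,a_j/|a|$ for $a\in\RR^n\setminus\{0\}$, proved by decomposing $g$ into its component along $a$ and its orthogonal part. Applying it pointwise with $a=(u_1(x),\dots,u_n(x))$, $|a|=w(x)$, and integrating against $f\in X_n$ (Fubini is licit because $\int\!\int|g_j|\,w(x)|f(x)|\,d\mu(x)\,d\gamma^n(g)<\infty$) gives $\langle\mathbb{E}_g[g_j\psi(g)],f\rangle_{L_2(\mu)}=\int u_j f\,d\mu=\langle u_j,f\rangle_{L_2(\mu)}$, i.e.\ $\mathbb{E}_g[g_j\psi(g)]=u_j$ in $L^*$ for each $j$. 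Therefore the first Gaussian chaos of $\psi$ is $\sum_j g_j\,\mathbb{E}_g[g_j\psi(g)]=\sum_j g_j u_j=G_g$.

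\textbf{Conclusion.} By the defining property of $K_{\mathrm{Gauss}}$ applied to $\psi\colon\RR^n\to L^*$,
\[
\mathbb{E}_g\|G_g\|_{L^*}^2=\mathbb{E}_g\Bigl\|\sum_j g_j\,\mathbb{E}_g[g_j\psi(g)]\Bigr\|_{L^*}^2\le K_{\mathrm{Gauss}}(L^*)^2\,\mathbb{E}_g\|\psi(g)\|_{L^*}^2\le K_{\mathrm{Gauss}}(L^*)^2\cdot\tfrac{\pi}{2}\theta^2 .
\]
Taking square roots and using $K_{\mathrm{Gauss}}(L^*)\le K(L^*)=K(L)=K\bigl(X_n,\|\cdot\|_{L_1(\mu)}\bigr)$ together with $\sqrt{\pi/2}\le\sqrt2$ yields $\mathbb{E}_g\|G_g\|_{L^*}\le\sqrt2\,\theta\,K\bigl(X_n,\|\cdot\|_{L_1(\mu)}\bigr)$, which is the assertion.

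\textbf{Where the difficulty lies.} The one genuine idea is the choice $\psi(g)=\sqrt{\pi/2}\,w\,\operatorname{sign}(G_g)$: it is \emph{uniformly bounded} in $L^*$, which makes $\mathbb{E}_g\|\psi(g)\|_{L^*}^2=O(\theta^2)$ — this is the entire gain over the useless trivial bound $\mathbb{E}_g\|G_g\|_{L^*}\lesssim\theta\sqrt n$ — while its first Gaussian chaos is still exactly $G_g$; the weight $w$ (rather than plain $\operatorname{sign}$) is forced by the factor $a_j/|a|$ in the Gaussian identity. Everything else is routine bookkeeping: the identification $L^*\cong X_n$, weak (hence Bochner) measurability and boundedness of $g\mapsto\psi(g)$, the Fubini swap, the negligible set $\{w=0\}$, and — the only inputs quoted from the literature — the classical facts $K(L)=K(L^*)$ and $K_{\mathrm{Gauss}}\le K$. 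An essentially equivalent alternative first reduces, by a change of density, to the case $w\equiv\mathrm{const}$ and then uses plain $\operatorname{sign}(G_g)$; the version above avoids that reduction.
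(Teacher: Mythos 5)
Your proof is correct, but it takes a genuinely different route from the paper's. The paper argues on the primal side: it replaces the Gaussians by normalized Bernoulli sums via the central limit theorem, picks for each sign pattern an extremizer $f_\varepsilon$ in the $L_1$-unit ball of $X_n$, and then combines Fubini, a pointwise Cauchy--Schwarz against $\bigl(\sum_k u_k^2\bigr)^{1/2}$ (which is where $\theta$ enters), Khintchine's inequality (which is where $\sqrt2$ comes from), and the defining property of $K\bigl(X_n,\|\cdot\|_{L_1(\mu)}\bigr)$ applied to $f_\varepsilon$. You instead work on the dual side: you identify the supremum with $\|G_g\|_{L^*}$, construct the explicit bounded witness $\psi(g)=\sqrt{\pi/2}\,w\,\operatorname{sign}(G_g)$ whose first Gaussian chaos is exactly $G_g$ (your Gaussian identity and the bound $\|w\|_\infty\le\theta$ are both correct), and then invoke $K_{\mathrm{Gauss}}(L^*)\le K(L^*)=K(L)$, obtaining the slightly better constant $\sqrt{\pi/2}\le\sqrt 2$. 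What each approach buys: yours eliminates the extremizer/Khintchine bookkeeping and makes the role of the Christoffel function transparent, but it outsources two nontrivial (though indeed classical) facts, the self-duality $K(E)=K(E^*)$ of the Rademacher projection and the comparison $K_{\mathrm{Gauss}}\le K$; the latter is proved by exactly the CLT-plus-smoothing argument the paper applies directly to the quantity of interest (and, since $\psi$ is discontinuous, you do need to apply that comparison as an operator-norm statement extended by density, as you implicitly do, rather than running the CLT on $\psi$ itself). Two minor shared caveats: the identification of $(X_n,\|\cdot\|_{L^*})$ with the dual of $(X_n,\|\cdot\|_{L_1(\mu)})$, like the paper's use of $K\bigl(X_n,\|\cdot\|_{L_1(\mu)}\bigr)$, tacitly assumes $\|\cdot\|_{L_1(\mu)}$ is a finite norm on $X_n$, which holds in the intended application; and it would be worth one sentence verifying that the quoted comparison $K_{\mathrm{Gauss}}\le K$ holds with constant $1$ (it does), since your final constant $\sqrt2$ depends on it.
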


Since in \cite{Ta90}, \cite{LedTal}, and \cite{JS}
the statement of Theorem \ref{T2}
is presented in slightly different terms,
we now show how one can prove Lemma \ref{K-conv}
and how to
deduce Theorem \ref{T2} from Lemma \ref{K-conv}.
The proof follows
the argument from the proof of \cite[Proposition 16]{JS}
almost verbatim.

{\bf Proof of Lemma \ref{K-conv}}.
Let $B_1:=\{f\in X_n\colon \|f\|_{L_1(\mu)}\le1\}$.
Let $\{\varepsilon_{ik}\}$, $i\in \{1,\ldots, N\}$, $k\in\{1,\ldots, n\}$
--- be i.i.d. symmetric Bernoulli random variables.
Then by the multivariate central limit theorem, we have
$$
\mathbb{E}_g\sup\limits_{f\in B_1}
\Bigl|\sum_{k=1}^{n}g_k\langle u_k, f\rangle\Bigr|
=
\lim_{N\to \infty}N^{-1/2}\mathbb{E}_\varepsilon\sup\limits_{f\in B_1}
\Bigl|\sum_{k=1}^{n}\sum_{i=1}^{N}\varepsilon_{ik}\langle u_k, f\rangle\Bigr|.
$$
For any fixed vector $\varepsilon=(\varepsilon_{ik})$ there is a vector
$f_\varepsilon\in X_n$
such that
$\|f_\varepsilon\|_1\le1$ and
$$
\sup\limits_{f\in B_1}
\Bigl|\sum_{k=1}^{n}\sum_{i=1}^{N}\varepsilon_{ik}\langle u_k, f\rangle\Bigr|
=
\sum_{k=1}^{n}\sum_{i=1}^{N}\varepsilon_{ik}\langle u_k, f_\varepsilon\rangle.
$$
Thus,  using Fubini's theorem, we get
\begin{align*}
\mathbb{E}_\varepsilon\sup\limits_{f\in B_1}
\Bigl|\sum_{k=1}^{n}\sum_{i=1}^{N}\varepsilon_{ik}\langle u_k, f\rangle\Bigr|
&=\mathbb{E}_\varepsilon
\Bigl[\sum_{k=1}^{n}\sum_{i=1}^{N}\varepsilon_{ik}\langle u_k, f_\varepsilon\rangle\Bigr]\\
&=
\sum_{k=1}^{n}\sum_{i=1}^{N}\langle u_k, f_{ik}\rangle=:S_{N,n},
\end{align*}
where $f_{ik}=\mathbb{E}_\varepsilon[\varepsilon_{ik}f_\varepsilon]$.
By H\"older's inequality, we have
\begin{align*}
S_{N,n}
&=
\int_\Omega \Bigl[\sum_{k=1}^{n}\sum_{i=1}^{N}u_k(x) f_{ik}(x)\Bigr]\, d\mu(x)
\\
&\le
\int_\Omega \Bigl(\sum_{k=1}^{n}\sum_{i=1}^{N}|u_k(x)|^2\Bigr)^{1/2}
\Bigl(\sum_{k=1}^{n}\sum_{i=1}^{N}|f_{ik}(x)|^2\Bigr)^{1/2}\,  d\mu(x)
\\
&\le N^{1/2}
\Bigl\|\Bigl(\sum_{k=1}^{n}|u_k|^2\Bigr)^{1/2}\Bigr\|_\infty
\int_\Omega\Bigl(\sum_{k=1}^{n}\sum_{i=1}^{N}|f_{ik}(x)|^2\Bigr)^{1/2}\,  d\mu(x).
\end{align*}
Since $\{u_1, \cdots, u_n\}$ is an orthonormal basis of $X_n$, we have
\[ \Bigl\|\Bigl(\sum_{k=1}^{n}|u_k|^2\Bigr)^{1/2}\Bigr\|_\infty=\sup_{\sub{ f\in X_n\\
		\|f\|_2=1}}\|f\|_\infty \leq \theta.\]
It then  follows by Khintchine's  inequality  that
\begin{align*}
S_{N,n}&\le
\sqrt{2}N^{1/2}\theta
\int_\Omega\Bigl[ \mathbb{E}_\varepsilon\Bigl|\sum_{k=1}^{N}\sum_{i=1}^{n}\varepsilon_{ik}f_{ik}(x)\Bigr|\Bigr]\,  d\mu(x)
\\
&=
\sqrt{2} N^{1/2}\theta
\mathbb{E}_\varepsilon \Bigl\|\sum_{k=1}^{N}\sum_{i=1}^{n}\varepsilon_{ik}f_{ik}\Bigr\|_1\le
\sqrt{2} N^{1/2}\theta K\bigl(X_n, \|\cdot\|_1\bigr).
\end{align*}
This leads to the desired upper bound:
$$
\mathbb{E}_g\sup\limits_{f\in B_1}
\Bigl|\sum_{k=1}^{n}g_k\langle u_k, f\rangle\Bigr|
=
\lim_{N\to \infty}N^{-1/2} S_{N,n} \leq \sqrt{2} \theta K\bigl(X_n, \|\cdot\|_1\bigr).
$$
The lemma is proved.
\qed

{\bf Proof of Theorem \ref{T2}}.
It is known (see \cite[Theorem 4.12 and Estimate (4.8)]{LedTal})
that
\begin{multline*}
\mathbb{E}_\varepsilon\sup\limits_{f\in X_n\cap B_1^M}
\Bigl|\sum\limits_{j=1}^M\varepsilon_j|f(j)|\Bigr|\le
\sqrt{2\pi}\mathbb{E}_g\sup\limits_{f\in X_n\cap B_1^M}
\Bigl|\sum\limits_{j=1}^M g_jf(j)\Bigr|
=\\=
\sqrt{2\pi}M^{-1}\mathbb{E}_g\sup\limits_{\substack{f\in X_n,\\ \|f\|_{L_1(\mu)}\le1}}
\Bigl|\sum\limits_{j=1}^M g_jf(j)\Bigr|,
\end{multline*}
where $g_1,\ldots, g_m$ are i.i.d. $\mathcal{N}(0,1)$ random variables
and  $\mu$ is
a uniform probability distribution on the set $\Omega_M=\{1, \ldots, M\}$.
For any orthonormal basis $\{u_1, \ldots, u_n\}$ of $X_n$
with respect to the norm $\|f\|_{L_2(\mu)}=\frac{1}{\sqrt{M}}\|f\|_{2}$
we have
\begin{multline*}
\mathbb{E}_g\sup\limits_{\substack{f\in X_n,\\ \|f\|_{L_1(\mu)}\le1}}
\Bigl|\sum\limits_{j=1}^M g _jf(j)\Bigr|
=
\mathbb{E}_g\sup\limits_{\substack{f\in X_n,\\ \|f\|_{L_1(\mu)}\le1}}
\Bigl|\sum\limits_{k=1}^n\langle f, u_k\rangle_{L_2(\mu)}\sum\limits_{j=1}^M g_ju_k(j)\Bigr|
\\
=
\sqrt{M}\, \mathbb{E}_g\sup\limits_{\substack{f\in X_n,\\ \|f\|_{L_1(\mu)}\le1}}
\Bigl|\sum\limits_{k=1}^n G _k\langle f, u_k\rangle_{L_2(\mu)}\Bigr|,
\end{multline*}
where
$G_k := \frac{1}{\sqrt{M}}\sum\limits_{j=1}^M g_ju_k(j)$.
Since $\{u_1, \cdots, u_n\}$ is an orthonormal basis of $(X_n, \|\cdot\|_{L_2(\mu)})$,
it follows by the rotation invariance of the Gaussian random vector that $(G_1, \cdots, G_n)\sim \mathcal{N}(0, I_n)_{\RR^n}$.
In our case we have $$\|f\|_{L_\infty(\mu)}\le \theta\|f\|_2=\theta\sqrt{M}\|f\|_{L_2(\mu)},  \  \   \   \forall f\in X_n.$$
Theorem \ref{T2} then follows  by Lemma \ref{K-conv}.
\qed


\section{Appendix II. Proof of Theorem \ref{thm-5-1}}\label{sec:8}

The proof of Theorem \ref{thm-5-1}  is very close to that of Theorem 16.8.2 of  \cite{Ta}.
We first   recall some notations.
We   identify  a vector from  $\RR^M$ with a function on the set  $\Og_M:=\{1,\cdots, M\}$, and denote by  $\ell_p^M$, $0<p\leq \infty$  the space $\RR^M$ equipped with the norm
$$ \|f\|_p:=\begin{cases}
(\sum_{i=1}^M|f(i)|^p)^{1/p},&\  \ \text{if $0<p<\infty$},\\[5mm]
\max_{1\leq i\leq M} |f(i)|, &\   \ \text{if $p=\infty$}.
\end{cases}$$  Let $B_p^M:=\{f\in \RR^M:\  \|f\|_p\leq 1\}$ denote the unit ball of $\ell_p^M$.
For each $I\subset \Og_M$, we denote by $R_I$ the orthogonal projection onto the space spanned by $e_i, i\in I$, where $e_1=(1,0,\cdots, 0)$, $\cdots$, $e_M=(0, \cdots, 0, 1)\in\RR^M$. Finally,  we set $N_0=1$ and $N_k=2^{2^k}$ for $k=1,2,\cdots$.

The proof of Theorem \ref{thm-5-1} uses  a majorizing measure theorem of Talagrand \cite{Ta}, which we now recall.
Let $(T,\rho)$ be a metric space, and let
$ B_\rho(s, r):=\{t\in T:\  \ \rho(s, t) \leq r\}$ denote the ball  with center $s\in T$ and radius $r>0$.  Let $H_1,\cdots, H_N$ be subsets  of $T$. We say  $H_1,\cdots, H_N$ are $(a, \delta,\rho)$-separated  for some constants  $a>0$ and $0<\delta <1/2$  if  there exist points $t_0\in T$ and  $t_1, \cdots, t_N\in B_\rho(t_0,  4 a)$ such that
\[ \min_{1\leq j\neq k\leq N} \rho(t_j, t_k) \ge a\   \   \ \text{and}\   \   H_j \subset B_\rho (t_j, \delta a)\   \   \ \text{  for all $ 1\leq j \leq N$. }  \]	
A functional $F$  on the metric space  $T$  is a nonnegative function     on  the collection   of all subsets of $T$  satisfying  $ F(H)\leq F(H')$ whenever $H\subset H'\subset T$.
A sequence 	$(F_k)_{k=0}^\infty$ of functionals on $T$  is called decreasing if $F_k(H)\ge F_{k+1}(H)$ for all $k\ge 0$ and $H\subset T$.

The  following version of  the majorizing measure theorem of Talagrand \cite{Ta}  can  be obtained as a combination of \cite[Theorem 2.7.6,  p. 68]{Ta} and   \cite[Theorem 2.2.18,  p. 25]{Ta}.

\begin{Theorem}[Majorizing measure theorem]\textnormal{\cite{Ta}}
	\label{thm-3-3-Ta} Let  $T=(T,\rho)$ be a  metric space with $\diam(T):=\sup_{s, t\in T}\rho(s, t)<\infty$ , and let $(V_t)_{t\in T} $ be a    random process  satisfying  that
	\[ \PP (|V_t-V_s| >u) \leq 2 \exp \Bl( - \f {cu^2} {\rho^2(s, t)}\Br),\   \    \forall u>0, \   \ \forall s, t\in T,\]
	where $c>0$ is an absolute constant.
	Assume that there exists 	a decreasing sequence of functionals $(F_k)_{k=0}^\infty$  on $T$    satisfying   the following growth condition: there exist a  constant   $\sa>0$ and an integer $k_0\ge 2$ such that for  each  integer $k\ge 0$,
	\begin{equation}\label{3-4-Ta}
	F_k\Bl( \bigcup_{\ell=1}^{N_{k+1}} H_\ell\Br) \ge \sa a \sqrt{\log N_{k}} +\min_{1\leq \ell \leq N_{k+1}} F_{k+k_0} (H_\ell)
	\end{equation}
	whenever $H_1, \cdots, H_{N_{k+1}}$ are $(a, 4^{-k_0},\rho)$-separated for some constant $a>0$.
	Then there exists an absolute constant $C>0$ such that \[ \EE \sup_{t\in T} V_t \leq C k_0  \Bl( \f{  F_0(T) }{\sa}+  \diam (T)\Br).\]
\end{Theorem}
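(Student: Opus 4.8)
The plan is to derive Theorem~\ref{thm-3-3-Ta} by combining the two results of Talagrand's monograph \cite{Ta} mentioned just above it, so that beyond a careful matching of notation no new argument is needed. Recall that for a metric space $(T,\rho)$ one has the generic chaining functional $\gamma_2(T,\rho)$: the infimum over all admissible sequences of partitions $(\mathcal{A}_k)_{k\ge 0}$ of $T$ (with $|\mathcal{A}_k|\le N_k$) of $\sup_{t\in T}\sum_{k\ge 0}2^{k/2}\diam(A_k(t))$, where $A_k(t)$ is the cell of $\mathcal{A}_k$ containing $t$. The argument splits into the two steps suggested by the two hypotheses of the theorem.

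First I would invoke the growth condition theorem \cite[Theorem 2.7.6]{Ta}. Its hypothesis is precisely that $(F_k)_{k\ge 0}$ be a decreasing sequence of functionals obeying \eqref{3-4-Ta} for the scales $N_k=2^{2^k}$, with $(a,4^{-k_0},\rho)$-separated families, parameter $\sigma>0$, and index shift $k_0$. Its conclusion, reached there through an inductive construction of an admissible partition sequence---at level $k$ each current cell is split into at most $N_{k+1}$ subcells of $\rho$-radius at most $4^{-k_0}$ times that of the parent, and \eqref{3-4-Ta} is used to force the chaining cost $\sum_k 2^{k/2}\diam(A_k(t))$ along every branch to be dominated by the total decrease of the functionals---is the bound
\[
\gamma_2(T,\rho)\le L_1 k_0\Bl(\frac{F_0(T)}{\sigma}+\diam(T)\Br)
\]
with $L_1$ an absolute constant.

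Next I would apply the generic chaining upper bound \cite[Theorem 2.2.18]{Ta}, whose increment hypothesis is exactly the subgaussian tail bound $\PP(|V_t-V_s|>u)\le 2\exp(-cu^2/\rho^2(s,t))$ assumed here; it yields $\EE\sup_{t\in T}V_t\le L_2\bl(\gamma_2(T,\rho)+\diam(T)\br)$ with $L_2$ absolute, the $\diam(T)$ term covering the passage from $\EE\sup_{s,t}|V_t-V_s|$ to $\EE\sup_t V_t$ (which in all our applications is harmless, since a base point $t_0$ with $V_{t_0}=0$ is available). Combining the two displayed inequalities gives $\EE\sup_{t\in T}V_t\le Ck_0\bl(F_0(T)/\sigma+\diam(T)\br)$ with $C$ absolute, which is the assertion.

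I expect the only real difficulty here to be bookkeeping rather than mathematics: one must verify that the specific normalizations in the statement---the double-exponential scales $N_k=2^{2^k}$, the constant $4^{-k_0}$ in the definition of separated families, the weight $\sqrt{\log N_k}\asymp 2^{k/2}$ in \eqref{3-4-Ta}, and the shift from $F_k$ to $F_{k+k_0}$---line up with the hypotheses of Theorems 2.7.6 and 2.2.18 of \cite{Ta} as they are stated there. Since those theorems are already formulated in essentially this generality, this is a matter of translating conventions, not of producing a fresh proof; all the real content sits inside Talagrand's inductive construction of admissible partitions from a growth-controlled family of functionals, which I would not reproduce here.
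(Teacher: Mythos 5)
Your proposal follows exactly the route the paper itself takes: the paper gives no independent proof of Theorem \ref{thm-3-3-Ta}, stating only that it ``can be obtained as a combination of Theorem 2.7.6 and Theorem 2.2.18 of \cite{Ta}'', i.e.\ the growth-condition/partitioning theorem to bound the $\gamma_2$ functional by $Ck_0(F_0(T)/\sigma+\diam(T))$ and the generic chaining bound for subgaussian increments to pass from $\gamma_2$ to $\EE\sup_t V_t$. Your identification of which cited theorem plays which role, and your caveat that only the normalization bookkeeping (scales $N_k$, the $4^{-k_0}$ separation, the shift $k\mapsto k+k_0$) needs checking, is consistent with the paper's treatment.
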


To prove Theorem \ref{thm-5-1},  we also  need some known  estimates of  entropy numbers.
Let $(X, \|\cdot\|_X)$ be a Banach space.  Let  $B_X(g,r)$ denote  the   closed ball $\{f\in X:\|f-g\|\le r\}$ with  center $g\in X$ and radius $r>0$.   The entropy numbers $e_k(A,X)$ of  a  set $A$ in $X$ are defined  as
\begin{align*}
e_k(A,\|\cdot\|_X)  :&=\inf \Bl\{\e>0:  \exists g^1,\cdots, g^{2^k}\in A\  \ \text{such that}\  \ A \subset \bigcup_{j=1}^{2^k} B_X(g^j, \va)    \Br \},
\end{align*}
where $k=0,1,\cdots$.
The following   estimates can be found in  \cite[Theorem 2.1 and (2.5)]{DPSTT2}.
\begin{Lemma}\textnormal{\cite[Theorem 2.1 and (2.5)]{DPSTT2}} \label{lem-6-1-0-Ta}
	Let $X_n$ be an $n$-dimensional subspace of $\RR^M$ satisfying
	\[ \|f\|_\infty \leq \sqrt{\f {Kn} M}\|f\|_{2},\   \   \forall f\in X_n\]
	for some constant $K\ge 1$.
	Then for any  $1\leq p\leq 2< q\leq \infty$,
	\begin{equation}\label{8-1a}
	e_k(X_n\cap B_p^M,\   \|\cdot\|_{q})\leq C_{p,q} \Bl( \f {\log M} M\Br) ^{\f 1p-\f1q} \displaystyle \Bl( \f {Kn} k \Br)^{\f 1p-\f1q},\  \ k=1,2,\cdots.
	\end{equation}
\end{Lemma}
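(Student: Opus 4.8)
My plan is to obtain this as the finite-dimensional $\ell_q^M$ incarnation of the entropy estimate of \cite[Theorem~2.1]{DPSTT2} (compare the closely related Lemma~\ref{prL2} above): one may simply invoke \cite[Theorem~2.1 and (2.5)]{DPSTT2} verbatim, and I would do so, but for orientation let me indicate the mechanism. The argument splits along two ranges of the index $k$: for small $k$ a ``radius'' bound coming from the Nikol'skii inequality suffices, and for larger $k$ one uses a Maurey-type empirical approximation.

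For the small-$k$ range: from the hypothesis $\|f\|_\infty\le\sqrt{Kn/M}\,\|f\|_2$ together with the elementary interpolation inequality $\|f\|_2\le\|f\|_\infty^{1-p/2}\|f\|_p^{p/2}$ (valid since $p\le2$) one obtains $\|f\|_\infty\le(Kn/M)^{1/p}\|f\|_p$ for all $f\in X_n$; interpolating once more, $\|f\|_q\le\|f\|_\infty^{1-p/q}\|f\|_p^{p/q}$, hence
\[
\sup_{f\in X_n\cap B_p^M}\|f\|_q\le\Bl(\f{Kn}{M}\Br)^{1/p-1/q}.
\]
Since the right-hand side of \eqref{8-1a} exceeds this quantity, after adjusting $C_{p,q}$, as soon as $k\le c_{p,q}\log M$, the asserted inequality is trivial for such $k$ (the entropy number never exceeds the radius). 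It therefore remains to treat $k\gtrsim\log M$.

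For the decay regime I would first reduce to $q=\infty$, the range $2<q<\infty$ following by interpolating entropy numbers between $q=2$ and $q=\infty$; that is, it suffices to show $\cH_t\bigl(X_n\cap B_p^M,\|\cdot\|_\infty\bigr)\lesssim_p Kn\log M/(t^pM)$. Here Maurey's empirical method enters: one represents $f\in X_n\cap B_p^M$ as the average of $N$ i.i.d.\ signed coordinate atoms drawn with probabilities proportional to $|f(i)|^p$, estimates the $\ell_\infty$-error of the average by a Bernstein/union-bound argument over the $M$ coordinates, and controls the size of the atoms --- and hence the error --- via the Nikol'skii-derived inequality $\|f\|_\infty\le(Kn/M)^{1/p}\|f\|_p$, which is the point where the dimension $n$ enters. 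Counting the resulting approximants gives $\cH_t\lesssim N\log M$, and optimizing in $N$ produces the claimed bound. For $1<p<2$ the naive atoms are too large in $\ell_\infty$, so one must first split off the (few, because $\|f\|_p\le1$) coordinates on which $|f(i)|$ is large and handle them separately; this truncation is the delicate point and is carried out in detail in \cite{DPSTT2}. I expect exactly this Maurey step for $1<p<2$ --- the bookkeeping of the truncation and of the $\ell_\infty$-size of the empirical approximant --- to be the main obstacle, and the honest shortcut is to quote \cite[Theorem~2.1 and (2.5)]{DPSTT2}.
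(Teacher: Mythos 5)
Your proposal is correct and takes essentially the same route as the paper: the paper itself gives no proof of this lemma beyond the citation to \cite[Theorem 2.1 and (2.5)]{DPSTT2}, so invoking that reference verbatim is exactly what is done here, and your preliminary reductions (the bound $\|f\|_\infty\le (Kn/M)^{1/p}\|f\|_p$ making the claim trivial for $k\lesssim \log M$, and the H\"older-type reduction of $2<q\le\infty$ to the $\ell_\infty$ entropy estimate) are sound. Note only that your orientation sketch is heuristic: a naive Maurey/union-bound argument gives roughly $\mathcal{H}_t\lesssim Kn(\log M)^2/(Mt^2)$, not the stated $Kn\log M/(Mt^p)$ for $p<2$, so the $k^{-1/p}$ rate really does require the finer analysis of \cite{DPSTT2}, to which you correctly defer.
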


Now we turn to the proof of Theorem \ref{thm-5-1}.
Throughout the proof, we use the letter $C_1$ to  denote  a sufficiently large positive constant depending only on $p$, and  use the letter $C$ to denote a general positive constant which  depends only on $p$ but  may vary at each appearance.

Without loss of generality, we may assume that $K=1$ since otherwise we may replace $Kn$ by $n$ and consider $X_n$ as a space of dimension at most $Kn$. We may also assume that  $
M\ge C_1 n \log (2M)
$  since otherwise \eqref{3-1-Ta} holds trivially. In particular, this implies that $\log\log M\leq  \log \f Mn$.

Let  $$T:=\Bl\{ |f|^p:\  \ f\in X_n\cap B_p^M\Br\}\subset B_1^M.$$
For $s\in T$,  define
$V_s:= \sum_{i=1}^M   s(i)\va_i$.
Then $\{V_s\}_{s\in T}$ is a symmetric random process satisfying ({see \cite[Lemma 4.3]{LedTal}}) that
\[ \PP \{ |V_s-V_t|>u\}= \PP \{ |V_{s-t}|>u\}\leq 2 \exp \Bl( -\f { u^2}{2\|s-t\|_2^2}\Br),\   \    \forall s, t\in T,\   \   \forall u>0.\]
Our aim is to show that
\begin{align}
\EE \Bl( \sup_{t\in T  } |V_t |\Br) \leq C \sqrt{\f {n\log M} M} \log \f M{n}.\label{3-1-Ta}
\end{align}

To prove \eqref{3-1-Ta}, we will apply  Theorem \ref{thm-3-3-Ta} to the random process $\{V_s\}_{s\in T}$  on the compact metric space
$(T, \|\cdot\|_2)$.
Since
the condition \eqref{5-1-Ta}  implies  \begin{equation}\label{5-4-Ta}
\|f\|_\infty \leq \Bl( \f {n} M\Br)^{1/p} \|f\|_p,\   \ \forall f\in X_n,
\end{equation}
we have that   for  $t=|f|^p \in T$ with $f\in X_n\cap B_p^M$,
\begin{align*}
\|t\|_2^2  = \sum_{i=1}^M |f(i)|^{2p}\leq \|f\|_\infty^p  \sum_{i=1}^M  |f(i)|^p\leq \f {n} M,
\end{align*}
implying
\begin{equation}\label{3-2-Ta}
\diam (T)=\sup_{s,t\in T}\|s-t\|_2\leq 2\sqrt{\f {n} M}.
\end{equation}

To apply  Theorem \ref{thm-3-3-Ta}, we need to   construct a decreasing sequence of  functionals $(F_k)_{k\ge 0}$  on the metric space $T:=(T, \|\cdot\|_2)$ satisfying  $F_0(T) \leq 2$ and  a growth condition. To this end, for $k=0,1,\cdots$, we let
$
A_k:= \f {2^k} {(\log M)^4}$, and
let $\Sigma_k$ denote the collection of all subsets $I$ of $\Og_M:=\{1,2,\cdots, M\}$ with $|I|\leq A_k$.
For a subset $H\subset T$ and an integer $k\ge 0$, we define $\vi_k(H)=0$ if $A_k<1$, and
\begin{equation}\label{3-5bb}
\vi_k(H): =\max_{I\in\Sigma_k} \inf_{x\in H} \|R_I x\|_{1}
\end{equation}
if $A_k\ge 1$.
Clearly,  $0\leq \vi_k(H)\leq  \vi_{k+1}(H)\leq 1$, and
$\vi_k (H_1)\ge  \vi_k(H_2)\ge \vi_k(T)=0$ whenever $H_1\subset H_2\subset T$.  Now we  define the sequence of  functionals  on $T$ as follows:
\begin{equation}\label{3-5-a} F_k(H):=1-\vi_k(H)+\f 1 {n_1} \max\{n_1-k,0\},\   \   H\subset T,\  \  k=0,1,\cdots,\end{equation}
where  $n_1$ is a positive integer satisfying \begin{equation}\label{3-6a}
C_1\log M\leq n_1<2C_1 \log M.
\end{equation}
Clearly, $(F_k)_{k\ge 0}$ is  a decreasing sequence of  functionals on $T$, and  $F_0(T)\leq 2$.

We need to prove $(F_k)_{k\ge 0}$ satisfies a  growth condition.
Let $k_0\in \NN$ be such that
\begin{equation}\label{5-8-0-Ta}
2^{k_0-1}<\f {C_1 M (\log M)^{6 + \f 4 {2-p}}} { n} \leq 2^{k_0},
\end{equation}
where we  choose the constant $C_1$ large enough so that
\begin{equation}\label{5-8-Ta}
\log_2 \f M{n}\leq k_0 \leq \f{C_1}8 \log_2 \f M{n}.
\end{equation}
Let \begin{equation}\label{5-7-Ta}
\sa:=c_1\sqrt{\f M {n\log M}},\   \    \  \text{where $c_1=\f 1 {4C_1}$. }
\end{equation}
Our aim is to  show  that
for each integer $k\ge 0$,
\begin{equation}\label{5-9-Ta}
F_k\Bl( \bigcup_{\ell=1}^{N_{k+1}} H_\ell\Br) \ge \sa a 2^{k/2} +\min_{1\leq \ell\leq N_{k+1} } F_{k+k_0} (H_\ell)
\end{equation}
whenever $H_1, \cdots, H_{N_{k+1}}\subset T$ are $(a, 4^{-k_0}, \|\cdot\|_{2})$-separated  for some constant  $a>0$.
Once \eqref{5-9-Ta}  is proved, the desired estimate \eqref{3-1-Ta} will follow immediately from  Theorem~\ref{thm-3-3-Ta} and \eqref{5-8-Ta}  since  according to  \eqref{3-2-Ta}, we have  $\diam(T)\leq C \sa^{-1}$.

For the proof of \eqref{5-9-Ta},     we   fix the  integer $k\ge 0$ so that $A_k\ge 1$,  set $N:=N_{k+1}$, and assume that  $H_1, \cdots, H_{N}\subset T$ are $(a, 4^{-k_0}, \|\cdot\|_2)$-separated.
Denote by $B(x, r)$
the Euclidean ball $\{z\in\RR^M:\  \ \|z-x\|_2\leq r\}$ with center  $x\in\RR^M$ and radius $r>0$, and  define   $$B^T(x, r) :=B(x,r)\cap T=\{ z\in T:\  \ \|z-x\|_2\leq r\}\  \ \text{ for $x\in T$ and $r>0$.}$$
Then  there exist  $t_0\in T$ and   $ t_1,\cdots, t_{N} \in  B^T(t_0, 4a)$   such that  $\min_{1\leq i\neq j\leq N} \|t_i-t_j\|_2 \ge a$  and
$H_j\subset B^T(t_j, 4^{-k_0} a)$ for $j=1,\cdots, N$.

We will  reduce   the inequality \eqref{5-9-Ta} to   a relatively simpler  one in several steps.
First,
we   claim that it suffices to  prove \eqref{5-9-Ta} for $ k<  \log_2 (3M)$. Indeed, since  $t_1, \cdots, t_{N}\in B(t_0, 4a)$ are   $a$-separated  with respect to the Euclidean distance (i.e., $\|t_i-t_j\|_2\ge a$ for any $1\leq i\neq j \leq N$),
it follows that   $\log_2 N=2^{k+1}\leq 2M\log_23$, which implies that $k+1=\log_2\log_2 N \leq\log_2( 5M)$.

Second, we   claim  that it suffices to show the inequality
\begin{align} \vi_k\Bl(\bigcup_{j=1}^N H_j\Br) +\sa a 2^{k/2}  \leq
\max_{1\leq j\leq N } \vi_{k+k_0} (H_j) \label{5-10-Ta}
\end{align}
under the conditions
\begin{equation}
0\leq k<  \log_2 (5M)\   \ \text{and}\   \ a^2 2^k \ge \f {n} {M\log M}.\label{5-11-Ta}
\end{equation}
Indeed, by the first claim, we may assume that $0\leq k<  \log_2 (5M)$.  By  \eqref{5-8-Ta} and \eqref{3-6a},  we then have  $k+k_0<n_1$.  Thus, by \eqref{3-5-a},   the desired inequality  \eqref{5-9-Ta}   is equivalent to the inequality,
\begin{align} \vi_k\Bl(\bigcup_{j=1}^N H_j\Br) +\sa a 2^{k/2}  \leq
\max_{1\leq j\leq N } \vi_{k+k_0} (H_j)
+\f {k_0} {n_1}.  \label{5-10-Tb}
\end{align}
This last  inequality   holds trivially if $\sa a 2^{k/2}\leq \f {k_0} {n_1}$ because  $$\vi_k\Bl(\bigcup_{j=1}^N H_j\Br)\leq \min_{1\leq j\leq N} \vi_k(H_j)\leq \min_{1\leq j\leq N} \vi_{k+k_0}(H_j).$$    Thus, we may always assume that
$
\sa a 2^{k/2} >\f {k_0} {n_1}\ge \f 1 {n_1}$,
which,  using \eqref{5-7-Ta} and \eqref{3-6a}, implies  $a^2 2^k \ge \f {n} {M\log M}$.
Thus, it is enough to prove \eqref{5-10-Tb} under the conditions \eqref{5-11-Ta}. Since \eqref{5-10-Ta}  implies \eqref{5-10-Tb}, the claim then follows.

Third,     we claim  that  for the proof of \eqref{5-10-Ta}, it is enough to show  that for  each  set $I\in\Sigma_k$,
\begin{equation}\label{5-13-Ta}
\max_{1\leq \ell\leq N} \max_{\sub{J\in \Sigma_{k+k_0-1}\\
		J\subset \Og_M\setminus I}}\inf_{x\in H_\ell}  \|R_{J} x\|_1 \ge \sa a 2^{k/2}.
\end{equation}
Indeed, by the definition \eqref{3-5bb}, the inequality  \eqref{5-10-Ta} is equivalent to the assertion that for  each  set $I\in\Sigma_k$,
there exist an integer  $1\leq \ell \leq N$ and a set $J\in\Sigma_{k+k_0}$ such that
\begin{align}   \inf_{x\in \bigcup_{j=1}^N H_j } \|R_I x\|_{1}+\sa a 2^{k/2} \leq
\inf_{x\in H_\ell} \|R_{J} x\|_{1}.  \label{5-12-Ta}
\end{align}
Once \eqref{5-13-Ta} is proven, then   for each $I\in \Sigma_k$, there
exist an integer  $1\leq \ell \leq N$ and a set $J_\ell\subset \Og_M\setminus I$ such that $|J_\ell|\leq A_{k+k_0-1}$ and
$\inf_{x\in H_\ell}  \|R_{J_\ell} x\|_1 \ge \sa a 2^{k/2}$.
Setting  $J:=I\cup J_\ell$, we have $$|J|= |I|+|J_\ell|\leq A_{k+k_0-1} +A_k \leq A_{k+k_0},$$
and
\begin{align*}
\inf_{x\in \bigcup_{j=1}^N H_j } \|R_I x\|_{1}+\sa a 2^{k/2}&\leq  \inf_{x\in H_\ell } \|R_I x\|_{1}+\inf_{x\in H_\ell} \|R_{J_\ell} x\|_{1}\\
&\leq
\inf_{x\in H_\ell } \Bl[\|R_I x\|_{1}+\|R_{J_\ell} x\|_{1}\Br]=\inf_{x\in H_\ell} \|R_J x\|_{1},
\end{align*}
proving \eqref{5-12-Ta}.

Finally, we claim  that it is enough to prove  that for each integer  $k$  satisfying  \eqref{5-11-Ta}, and each $I\in \Sigma_k$,
\begin{equation}\label{3-16-Ta}
\max_{1\leq \ell\leq N} \max_{\sub{J\in \Sigma_{k+k_0-1}\\
		J\subset \Og_M\setminus I}}  \|R_{J} t_\ell\|_1 \ge 2 \sa a 2^{k/2} =2c_1\sqrt{\f M {n\log M}} 2^{k/2}a.
\end{equation}
According to  the   second and the third  claims that have already been proven,  we  need to prove that \eqref{3-16-Ta} implies \eqref{5-12-Ta},   assuming $k$ satisfies   \eqref{5-11-Ta}. Indeed, by \eqref{3-16-Ta},
there exist an integer $1\leq \ell \leq N$ and a set $J_\ell\subset\Og_M\setminus  I$  such that $|J_\ell|\leq A_{k+k_0-1}$ and
\begin{align}
\|R_{J_\ell} t_\ell \|_1\ge 2 \sa a 2^{k/2}.\label{3-17-Ta-1}
\end{align}
Since $H_\ell\subset B(t_\ell, 4^{-k_0} a)\cap T$,    it follows that  for any $x\in H_\ell$,
\begin{align*}
&\Bl| 	\|R_{J_\ell} t_\ell \|_1  -	\|R_{J_\ell} x \|_1\Br|\leq 	\|R_{J_\ell} (x-t_\ell)\|_1\leq \sqrt{|J_\ell |}\|t_\ell -x\|_2\\
& \leq \f { 2^{(k+k_0-1)/2} } {(\log M)^2}  4^{-k_0} a\leq a2^{k/2} 2^{-k_0} (\log M)^{-2} \leq \sa    a2^{k/2}.
\end{align*}
Thus, using  \eqref{3-17-Ta-1}, we have
\begin{align*}
&\inf_{x\in H_\ell} \|R_{J_\ell} x \|_1 \ge \|R_{J_\ell} t_\ell\|_1- \sa    a 2^{k/2}\ge \sa  2^{k/2}  a
\end{align*}
implying  \eqref{5-13-Ta}.

In summary, we reduce  to  showing  that  \eqref{3-16-Ta} holds  for each $I\in \Sigma_k$ with   $k$  satisfying  \eqref{5-11-Ta}. This is an easy consequence of the  following proposition.

\begin{Proposition}\label{prop-5-2-Ta} Assume that $1\leq p<2$,  $M\ge C n (\log M)$ for some large constant $C=C_p>1$, and  $k\ge 0$ is an integer satisfying
	$ a^2 2^k \ge \f {n} {M\log M}$ for some constant $a>0$. Assume in addition that  $N=N_{k+1}$ and   $t_1, \cdots, t_{N}$ are points in a ball $B^T(t_0, 4a)$ with $t_0\in T$  satisfying  $\min_{1\leq i\neq  j\leq N}\|t_i-t_j\|_2\ge a$.
	Let
	\begin{equation}\label{5-6-Ta-1} \tau:=\f 1{A_{k+k_0-1}} =2^{-k-k_0 +1}  (\log M)^4\end{equation}
	with  $k_0$ being  the  integer given in  \eqref{5-8-0-Ta} with $C_1=C$.
	Let  $I$ be  a subset of $\Og_M$ satisfying $|I|\leq A_k=2^{k} (\log M)^{-4}$.  For $1\leq \ell \leq N$,
	let $J_\ell:=\Bl\{\Omega_M \setminus I:\  \ t_\ell(i) \ge \tau\Br \}$.
	Then there exists a positive constant $c$ depending only on $p$ such that \begin{equation}\label{5-8-Ta-1}
	\max_{1\leq \ell \leq N} \|R_{J_\ell} t_\ell\|_1 \ge c\cdot  a 2^{k/2} \sqrt{ \f { M} { n \log M}}.
	\end{equation}
\end{Proposition}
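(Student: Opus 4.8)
The plan is to argue by contradiction with the separation hypothesis. Put $\beta:=c\,a\,2^{k/2}\sqrt{M/(n\log M)}$ with $c=c(p)>0$ a small constant to be fixed at the end, and suppose toward a contradiction that $\|R_{J_\ell}t_\ell\|_1<\beta$ for \emph{every} $\ell\in\{1,\dots,N\}$, $N=N_{k+1}=2^{2^{k+1}}$. Writing $L_\ell:=\Og_M\setminus(I\cup J_\ell)$ and $t_\ell=R_It_\ell+R_{J_\ell}t_\ell+R_{L_\ell}t_\ell$, I would show that the set of possible values of each of the three pieces admits, as a subset of $(\RR^M,\|\cdot\|_2)$, a net at scale $\asymp a$ of controlled logarithmic cardinality. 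The \emph{tiny piece} $R_{L_\ell}t_\ell$ has all coordinates below $\tau$ and $\|t_\ell\|_1\le1$, hence $\|R_{L_\ell}t_\ell\|_2^2<\tau$; the choice $\tau=1/A_{k+k_0-1}=2^{-k-k_0+1}(\log M)^4$, the lower bound $2^{k_0}\gtrsim M(\log M)^{6+4/(2-p)}/n$, and the assumption $a^22^k\ge n/(M\log M)$ together give $\sqrt\tau\le a/100$, so this piece is swallowed by the error of any net. The \emph{fixed piece} $R_It_\ell$ lies in the $|I|$-dimensional coordinate subspace $\RR^I$, and since $\|R_It_\ell\|_2^2\le\|t_\ell\|_\infty\|t_\ell\|_1\le n/M$ it lies in the ball of radius $\sqrt{n/M}$ there; as $|I|\le A_k=2^k(\log M)^{-4}$ and $a\ge\sqrt{n/(M2^k\log M)}$, a volumetric net of that ball has log-cardinality at most $|I|\log\bigl(1+C\sqrt{n/(Ma^2)}\bigr)\le C2^k(\log M)^{-3}\le\tfrac1{100}2^{k+1}$.

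The heart of the matter is the \emph{large-coordinate piece} $u_\ell:=R_{J_\ell}t_\ell=|R_{J_\ell}f_\ell|^p$, where $f_\ell\in X_n\cap B_p^M$ represents $t_\ell$. It is supported on $J_\ell$ with $|J_\ell|\le1/\tau=A_{k+k_0-1}=:s$ (because $|J_\ell|\tau\le\|u_\ell\|_1\le1$) and satisfies $\|u_\ell\|_1<\beta$. I would bound the $\|\cdot\|_2$-covering number, at scale $a/100$, of the set of all such vectors. The choice of a support $S$ (at most $\binom Ms$ of them) contributes $s\log_2(eM/s)$ to the logarithm of the covering number; for a fixed $S$ the relevant set lies in $\{|g|^p\colon g\in R_S(X_n\cap B_p^M),\ \|g\|_p<\beta^{1/p}\}$, and here I would invoke Lemma~\ref{lem-6-1-0-Ta}: the orthogonal projection $R_S$ is a $\|\cdot\|_q$-contraction, the Nikol'skii hypothesis \eqref{5-1-Ta} (hence \eqref{5-4-Ta}) forces $\|g\|_\infty\le(n/M)^{1/p}$ on this set so that $g\mapsto|g|^p$ is Lipschitz from a suitable $\ell_r$ into $\ell_2$ with constant controlled by $(n/M)^{1/p}$, and the entropy numbers $e_j(X_n\cap B_p^M,\|\cdot\|_q)$ supplied by Lemma~\ref{lem-6-1-0-Ta} then bound the covering number in terms of $n,\log M,\beta,a$. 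Once the three contributions are added, every $t_\ell$ sits within $a/3$ of a single net of logarithmic cardinality strictly below $2^{k+1}=\log_2N$; two of the $N$ points would then share a net-ball and be at distance less than $a$, contradicting the $a$-separation. Choosing $c$ small enough makes all the implicit inequalities close, and \eqref{3-16-Ta} (hence the growth condition) follows as indicated before the statement.

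The main obstacle — and the place where the precise numerology is unavoidable — is the estimate for the large-coordinate piece: it must be sharp enough that, after the combinatorial term $s\log_2(eM/s)$ is added, the total stays below $2^{k+1}$. Crude bounds fail badly here (for instance estimating $\|u_\ell\|_2$ only by $\bigl(\|u_\ell\|_\infty\|u_\ell\|_1\bigr)^{1/2}$, or dropping the constraint $\|u_\ell\|_1<\beta$, overshoots), and even a naive single-scale covering at scale $a$ is too weak; one has to run a chaining-type estimate that uses \emph{simultaneously} the $\ell_1$-smallness $\|u_\ell\|_1<\beta$, the sparsity $|J_\ell|\le s$, the $n$-dimensionality of $X_n$, and the Nikol'skii inequality. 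The polylogarithmic powers built into the parameters — $A_k=2^k(\log M)^{-4}$, $\tau=2^{-k-k_0+1}(\log M)^4$, and the defining relation $2^{k_0-1}<CM(\log M)^{6+4/(2-p)}/n\le2^{k_0}$ for $k_0$ — are exactly what makes these competing contributions balance. Carrying this out is the technical core of Proposition~\ref{prop-5-2-Ta}, and at this step we follow the argument in the proof of \cite[Theorem~16.8.2]{Ta}, reproduced here in a somewhat simplified form.
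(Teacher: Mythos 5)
Your outline is fine on the two easy pieces (the tail $\|R_{L_\ell}t_\ell\|_2^2\le\tau\ll a^2$ and a volumetric net for $R_It_\ell$ on at most $A_k$ coordinates), but it has a genuine gap exactly where you place ``the heart of the matter'', and the concrete plan you propose for that part cannot work as stated. First, the support-enumeration step is quantitatively doomed: by \eqref{5-8-0-Ta} and the hypothesis $M\ge Cn\log M$, the sparsity level $s:=A_{k+k_0-1}=2^{k+k_0-1}(\log M)^{-4}$ satisfies $s\ge c\,2^{k}\f {M(\log M)^{2+\f4{2-p}}}{n}\ge c'\,2^{k}(\log M)^{3+\f4{2-p}}$, so except in an extreme corner of the range of $k$ (whenever $s\le M/2$, which covers in particular all small $k$) the cost of choosing the support already gives $\log_2\binom{M}{\lfloor s\rfloor}\ge s\gg 2^{k+1}=\log_2 N$; the union bound over supports alone exceeds the entire budget of your counting argument before any covering inside a fixed support is attempted. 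Second, and decisively, the estimate you yourself identify as the technical core --- a covering bound for the large-coordinate piece sharp enough to exploit simultaneously $\|u_\ell\|_1<\beta$, the Nikol'skii inequality and $\dim X_n\le n$ --- is not carried out: you defer it to ``following the argument of Talagrand's Theorem 16.8.2''. Since that estimate \emph{is} the content of Proposition \ref{prop-5-2-Ta} (everything else in your sketch is routine), the proposal does not constitute a proof. Note also that for $p<2$ the map $g\mapsto|g|^p$ is not Lipschitz into $\ell_2$ near zero; the usable substitute is the truncated inequality \eqref{4-7-a}, in which the quantity $S$ being bounded reappears multiplicatively --- a self-referential feature your single-scale-net-plus-chaining picture does not address.

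For comparison, the paper does not run a contradiction/net argument over all of $T$ and never enumerates supports. It first extracts $V\subset\{1,\dots,N\}$ with $|V|>N_{k-2}$ on which the $t_\ell$ remain $a/\sqrt2$-separated on $I^c$ while the $f_\ell$ are uniformly close in $\|\cdot\|_\infty$ and on $I$ (Lemma \ref{lem-6-3-0-Ta}); the truncation inequality \eqref{4-7-a} then gives the product bound $\|f_\ell-f_{\ell'}\|_\infty^p\cdot S\ge c_p a^2$ of Lemma \ref{lem-6-4-0-Ta}, hence a preliminary bound on $S$ that is off by a factor $\log M$. The loss is recovered by a bootstrap: with $\xi=(3S)^{1/p}$ all differences $f_\ell-f_{\ell'}$ lie in $2X_n(\xi)$ with $X_n(\xi)=(\xi B_p^M+\tau^{\f1p-\f12}B_2^M)\cap X_n$ (Lemma \ref{lem-6-5-0-Ta}), whose $\ell_\infty$-entropy at level $2^{k-2}$ is proportional to $\xi$, i.e.\ to $S^{1/p}$ (Lemma \ref{prop-6-7-0-Ta}, proved by duality from Lemma \ref{lem-6-1-0-Ta}); a pigeonhole over $V$ then yields a pair with $\|f_\ell-f_{\ell'}\|_\infty^p\le C_p S\,\f n{2^k}\f{\log M}M$, and feeding this back into the product bound gives \eqref{5-8-Ta-1}. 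This self-improving use of the unknown $S$ in the entropy estimate is the mechanism missing from your proposal.
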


The proof of  Proposition \ref{prop-5-2-Ta} is long and  will be given in the next subsection. For the moment, we take it  for granted and proceed with the proof of \eqref{3-16-Ta} for each $I\in \Sigma_k$ with   $k$  satisfying  \eqref{5-11-Ta}.
By \eqref{5-8-Ta-1},  there exists an integer $1\leq \ell \leq N$ such that
\begin{align}
\|R_{J_\ell} t_\ell \|_1\ge c a 2^{k/2}  \sqrt{\f M {n \log M}},\label{5-9-Ta-1}
\end{align}
where  $J_\ell:=\Bl\{\Omega_M\setminus I:\  \ t_\ell(i) \ge \tau\Br \}$, and  $\tau$  is given  in \eqref{5-6-Ta-1}.
Since $\|t_\ell\|_1 \leq 1$, we have
\begin{align*}
&|J_\ell|\leq \f 1\tau =\f { 2^{k+k_0-1}} {(\log M)^4}=A_{k+k_0-1}.
\end{align*}
\eqref{3-16-Ta} then follows with $c_1=\f12 c$.

\subsection{Proof of Proposition \ref{prop-5-2-Ta}}

Recall that   $N=N_{k+1}=2^{2^{k+1}}$,  $1\leq p<2$, and 	$ a^2 2^k \ge \f {n} {M\log M}$.    For each $1\leq \ell \leq N$,  let $f_\ell\in X_n\cap B_p^M$  be such that $t_\ell =|f_\ell|^p$.   Let   $I\subset \Og_M$ be a fixed set  such that    $|I|\leq A_k:= \f {2^k} {(\log M)^4}$, and let  $I^c=\Og_M\setminus I$.  Set \begin{equation}\label{4-1-b}
S: = \max_{1\leq \ell \leq N} \sum_{i \in  I^c } t_{\ell}(i)\chi_{\{ t_\ell(i) \ge \tau\}} (i).
\end{equation}
Our aim is to prove
\begin{equation}\label{4-1:eq}
S\ge   c\cdot  a 2^{k/2} \sqrt{ \f { M} { n \log M}}.
\end{equation}

For the proof of \eqref{4-1:eq}, we need several lemmas.  The first lemma allows us to replace $\{t_\ell\}_{\ell=1}^N$  with   a   subset of points $t_\ell$, $\ell\in V$ that are ``well'' distributed.
\begin{Lemma}\label{lem-6-3-0-Ta}
	We can find a subset $V$ of $\{1, 2,\cdots, N_{k+1}\}$ with $|V|> N_{k-2}$ and the following properties:
	
	\begin{align}
	& \|R_{I^c} (t_\ell-t_{\ell'})\|_2
	\ge \f {a} {\sqrt{2}}\   \  \text{whenever $ \ell,\ell'\in V$ and $  \ell\neq \ell'$},\label{6-4-0-Ta}\\
	& \|f_\ell -f_{\ell'} \|_{\infty} \leq C_p  \Bl( \f {n} {2^k} \cdot \f {\log M}M \Br)^{1/p},\   \   \ \forall \ell, \ell' \in V,\label{6-5-0-Ta}\\
	& \|R_I (f_\ell) -R_{I} (f_{\ell'} ) \|_p  \leq C_p \Bl( \f { n } {M (\log M)^3}\Br)^{1/p}, \   \   \ \forall \ell, \ell' \in V.\label{6-6-0-Ta}
	\end{align}
\end{Lemma}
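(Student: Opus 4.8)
The plan is to produce $V$ by three consecutive pigeonhole extractions from $\{1,\dots,N_{k+1}\}$, one tailored to each of \eqref{6-4-0-Ta}, \eqref{6-5-0-Ta}, \eqref{6-6-0-Ta}, each based on a covering‑number estimate; the separation bound \eqref{6-4-0-Ta} will come out of the Pythagorean identity once the other two have been arranged. Throughout I may assume $A_k\ge1$ (if $A_k<1$ then $I=\emptyset$, so \eqref{6-4-0-Ta} is immediate from $\|t_\ell-t_{\ell'}\|_2\ge a$ and only the first extraction below is needed), and I freely use that $M$ is large, as guaranteed by the standing assumption $M\ge C n\log M$.

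First I would use Lemma~\ref{lem-6-1-0-Ta} with $K=1$, $q=\infty$ at scale $j=2^k$ to cover $X_n\cap B_p^M$ by $2^{2^k}=N_k$ balls of $\|\cdot\|_\infty$‑radius $\rho_\infty:=C_p\bigl(n2^{-k}\log M/M\bigr)^{1/p}$. Since there are $N_{k+1}=N_k^2$ functions $f_\ell$, some ball captures at least $N_{k+1}/N_k=N_k$ indices; this is a set $V_1$ with $|V_1|\ge N_k$ on which $\|f_\ell-f_{\ell'}\|_\infty\le 2\rho_\infty$, giving \eqref{6-5-0-Ta}. Next, the vectors $R_I f_\ell$ ($\ell\in V_1$) lie in $R_I(B_p^M)$, which I identify with $B_p^{|I|}$; by the volumetric bound it is covered by $(3/\rho_p)^{|I|}$ balls of $\|\cdot\|_p$‑radius $\rho_p:=C_p\bigl(n/(M(\log M)^3)\bigr)^{1/p}$, and since $|I|\le A_k=2^k(\log M)^{-4}$ while $\log_2(1/\rho_p)\le C\log M$ (using $M\ge Cn\log M$, $n\ge1$), this number is at most $2^{C2^k/(\log M)^3}$. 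Pigeonhole extracts $V_2\subset V_1$ with $|V_2|\ge N_k\,2^{-C2^k/(\log M)^3}$ on which $\|R_I(f_\ell-f_{\ell'})\|_p\le 2\rho_p$, giving \eqref{6-6-0-Ta}.

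For \eqref{6-4-0-Ta} I would extract once more: the vectors $R_I t_\ell$ ($\ell\in V_2$) lie in $R_I(B_1^M)\subset B_2^{|I|}$ (as $\|t_\ell\|_1\le1$), which is covered by $(12/a)^{|I|}$ balls of $\|\cdot\|_2$‑radius $a/4$ (here $a\le 2\sqrt{n/M}\le1$); since $a^2 2^k\ge n/(M\log M)$, $n\ge1$ and $k<\log_2(5M)$ force $\log_2(1/a)\le C\log M$, this count is again at most $2^{C2^k/(\log M)^3}$. This yields $V:=V_3\subset V_2$ with $\|R_I(t_\ell-t_{\ell'})\|_2\le a/2$ for $\ell,\ell'\in V$, whence $\|R_{I^c}(t_\ell-t_{\ell'})\|_2^2=\|t_\ell-t_{\ell'}\|_2^2-\|R_I(t_\ell-t_{\ell'})\|_2^2\ge a^2-a^2/4\ge a^2/2$ for $\ell\ne\ell'$, which is \eqref{6-4-0-Ta}; \eqref{6-5-0-Ta} and \eqref{6-6-0-Ta} persist because $V\subset V_1\cap V_2$. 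Finally $|V|\ge 2^{2^k(1-C'/(\log M)^3)}>2^{2^{k-2}}=N_{k-2}$ for $M$ large, as required.

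The hard part is the bookkeeping of the three reductions: each extraction on the short coordinate block $I$ costs a factor $2^{O(2^k/(\log M)^3)}$, and one must check that the $(\log M)^4$ built into the threshold $A_k$ for $|I|$ beats the $O(\log M)$ coming from $\log(1/\rho_p)$ and $\log(1/a)$, so that the two such factors are negligible against the slack $N_{k+1}/N_k=N_k$ gained in the first extraction and more than $N_{k-2}$ indices survive. The remaining ingredients — that $R_I$ lands in a low‑dimensional ball so the volumetric covering bound applies, and the one‑line Pythagorean deduction of \eqref{6-4-0-Ta} — are routine once this counting is in place.
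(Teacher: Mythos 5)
Your argument is correct and follows the same skeleton as the paper's proof: three successive pigeonhole extractions keyed to \eqref{6-5-0-Ta}, \eqref{6-6-0-Ta} and \eqref{6-4-0-Ta}, with the last inequality deduced from the separation $\|t_\ell-t_{\ell'}\|_2\ge a$ by Pythagoras, and the same final bookkeeping showing the surviving cell has more than $N_{k-2}$ indices. The differences are in how the second and third covering counts are produced. For \eqref{6-6-0-Ta} the paper stays with entropy numbers: it writes $\|R_I f\|_p\le |I|^{1/p}\|f\|_\infty\le A_k^{1/p}\|f\|_\infty$ and reuses the $\|\cdot\|_\infty$-entropy bound of Lemma \ref{lem-6-1-0-Ta}, getting at most $N_{k-2}$ cells; you instead cover $R_I(B_p^M)\subset B_p^{|I|}$ volumetrically in dimension $|I|\le A_k$, which is more elementary and gives the even smaller count $2^{C2^k/(\log M)^3}$, at the price of checking $\log(1/\rho_p)\le C\log M$, which you do. For \eqref{6-4-0-Ta} the paper covers the ball $\{u\in\RR^{|I|}:\ \|u-R_I t_0\|_2\le 4a\}$ by balls of radius $a/4$, a scale-free count $32^{|I|}\le N_{k-2}$ that needs no information on the size of $a$; you cover the unit ball $B_2^{|I|}$ instead, so your count $(12/a)^{|I|}$ requires the lower bound $a\ge M^{-C}$, which you obtain from $a^2 2^k\ge n/(M\log M)$ together with $k<\log_2(5M)$. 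That last condition is part of the standing assumption \eqref{5-11-Ta} in the reduction where the lemma is applied, but it is not listed among the hypotheses of Proposition \ref{prop-5-2-Ta}; you should either import it explicitly or observe that it follows from the hypotheses anyway, since $N_{k+1}$ points that are pairwise $a$-separated inside a ball of radius $4a$ in $\RR^M$ give $2^{2^{k+1}}=N_{k+1}\le 9^M$, hence $2^k\le 2M$. With that one sentence added, your proof is complete and yields the lemma with the same conclusion as the paper's.
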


\begin{proof} First, by Lemma \ref{lem-6-1-0-Ta},
\begin{equation*}
e_{{2^{k-2}}}(X_n\cap B_p^M; \|\cdot\|_{\infty})\leq C_{p} \Bl( \f {\log M} M\Br) ^{\f 1p} \displaystyle \Bl( \f{n}  {2^k} \Br)^{\f 1p}.
\end{equation*}
Since $\{f_1,\cdots, f_N\}\subset X_n\cap B_p^M$,
we can partition $\{1,2,\cdots, N\}$ into at most $N_{k-2}$ sets $V_{\al,1}$, $\al\in \cA_1$ so that  \eqref{6-5-0-Ta} with $V=V_{\al,1}$ is satisfied for each $\al\in\cA_1$.

Second, since  $\|R_I f\|_p \leq |I|^{1/p} \|f\|_\infty\leq A_k^{1/p} \|f\|_\infty$, we have
\begin{align*}
&	e_{{2^{k-2}}} \Bl(R_I X_n \cap B_p^M, \|\cdot\|_p\Br)\leq A_k^{1/p} \cdot e_{2^k} \Bl(R_I X_n \cap B_p^M, \|\cdot\|_\infty\Br)\\
&\leq  A_k^{1/p} \cdot e_{2^k} \Bl( X_n \cap B_p^M, \|\cdot\|_\infty\Br)
\leq C_{p} \Bl( A_k\cdot  \f {\log M } {M}\cdot  \f {n} {2^k} \Br)^{\f 1p}=C_p \Bl( \f {n} {M(\log M)^3}\Br)^{\f1p}.
\end{align*}
Thus, 	we can partition $\{1,2,\cdots, N\}$ into at most $N_{k-2}$ sets $V_{\al,2}$, $\al\in \cA_2$ so that  \eqref{6-6-0-Ta}  with $V=V_{\al,2}$ is satisfied for each $\al\in\cA_2$.

Third, setting  $E:=\{ R_I t_\ell:\  \ \ell=1,2,\cdots, N\}$, we have $$E\subset \{ u\in \RR^{|I|}:\  \ \|u-R_I t_0\|_2\leq 4a\}.$$  Since $|I|\leq A_k$,  $E$ can be covered  by at most $32^{|I|}\leq N_{k-2}$ Euclidean balls of radius $\f a4$ in $\RR^{|I|}$.
Thus, we may partition the set $\{1,2,\cdots, N\}$ into at most $N_{k-2}$ sets $V_{\al,3}$, $\al\in\cA_3$ such that
$$	\max_{\ell, \ell'\in V_{\al,3}} \|R_I (t_\ell) -R_I (t_{\ell'}) \|_2 \leq  \f {a} {4},\  \ \al\in \cA_3.$$
Since $\|t_\ell-t_{\ell'}\|_2\ge a$ for any two distinct $\ell, \ell'\in \{1,\cdots, N\}$, this in turn implies that   \eqref{6-4-0-Ta} with $V={V_{\al,3}}$ is satisfied for each $\al\in\cA_3$.

Finally, let $\{V_\al\}_{\al\in \cA}$ denote the partition of  $\{1,\cdots, N\}$
generated by the above three partitions.  Then $\{V_\al\}_{\al\in \cA}$ contains at most  $N_{k-2}^3 < N_k$ sets  for which    \eqref{6-4-0-Ta},  \eqref{6-5-0-Ta}, and  \eqref{6-6-0-Ta}
with $V=V_\al$ are satisfied for all $\al\in\cA$. Since $N_{k-2}^4=N_k<N=N_{k+1}$, we can find  a set $V$ from this last partition $\{V_\al\}_{\al\in\cA}$  with $|V|>N_{k-2}$.\end{proof}

For the reminder of the proof, we will always use the letter $V$ to denote  a subset of $\{1,\cdots, N\}$ with the properties stated in Lemma \ref{lem-6-3-0-Ta}. Therefore, we will work on the set of points $f_\ell$, $\ell \in V$ instead.

Our second lemma can be used to obtain useful lower estimates of the quantity $S$.

\begin{Lemma}\label{lem-6-4-0-Ta} There exists a constant $c_p>0$ depending only on $p$  such that
	\begin{equation}\label{5-24-Ta}
	\min_{\ell,\ell'\in V,\  \ \ell\neq \ell'}	\Bl[\|f_\ell -f_{\ell'}\|_{\infty} ^p \Br]\cdot S\ge c_p\cdot  a^2.
	\end{equation}
	In particular, this implies
	\begin{equation}\label{6-9-0-0-Ta}
	S \ge c_p a 2^{k/2} \sqrt{\f M {n\log M}} \f 1 {\log M}\ge \f {c_p} {\log ^2M}.
	\end{equation}
\end{Lemma}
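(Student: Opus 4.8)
The plan is to deduce both displayed inequalities from a single per‑pair estimate: for \emph{every} pair $\ell\ne\ell'$ in $V$ one has $\|f_\ell-f_{\ell'}\|_\infty^p\cdot S\ge c_p\,a^2$ with $c_p=(16p^p)^{-1}$. Since $S$ and $a$ do not depend on the pair, minimizing over $\ell\ne\ell'$ gives \eqref{5-24-Ta}, and \eqref{6-9-0-0-Ta} then follows by inserting the bound \eqref{6-5-0-Ta} on $\min_{\ell\ne\ell'}\|f_\ell-f_{\ell'}\|_\infty$. So fix $\ell\ne\ell'$ in $V$, put $\de:=\|f_\ell-f_{\ell'}\|_\infty$ and $m_i:=\max\{t_\ell(i),t_{\ell'}(i)\}$.

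The first and central step is the pointwise estimate $(t_\ell(i)-t_{\ell'}(i))^2\le p^p\de^p\,m_i$ for each $i$. To prove it I would start from the elementary inequality $|x^p-y^p|\le p\,|x-y|\,\max\{x,y\}^{p-1}$ for $x,y\ge0$ and $1\le p\le 2$ (immediate from monotonicity of $t\mapsto t^{p-1}$), apply it with $x=|f_\ell(i)|$, $y=|f_{\ell'}(i)|$, and use $\big||f_\ell(i)|-|f_{\ell'}(i)|\big|\le\de$ together with $\max\{|f_\ell(i)|,|f_{\ell'}(i)|\}^{p-1}=m_i^{(p-1)/p}$; this yields $|t_\ell(i)-t_{\ell'}(i)|\le p\de\,m_i^{(p-1)/p}$. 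Combining this with the trivial $|t_\ell(i)-t_{\ell'}(i)|\le m_i$ and interpolating — i.e. writing $X=X^{2-p}X^{p-1}$ where $X=(t_\ell(i)-t_{\ell'}(i))^2$, bounding $X^{2-p}$ by the power $(2-p)$ of $p\de\,m_i^{(2p-1)/p}$ and $X^{p-1}$ by the power $(p-1)$ of $p^2\de^2 m_i^{(2p-2)/p}$ — one checks that the exponent of $\de$ adds up to $p$ and that of $m_i$ collapses to $1$, giving exactly $X\le p^p\de^p m_i$.

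Next I would split the lower bound $\sum_{i\in I^c}(t_\ell(i)-t_{\ell'}(i))^2\ge a^2/2$ from \eqref{6-4-0-Ta} according to whether $m_i<\tau$ or $m_i\ge\tau$. On $\{m_i<\tau\}$ I would use instead $(t_\ell(i)-t_{\ell'}(i))^2\le m_i\,|t_\ell(i)-t_{\ell'}(i)|<\tau(t_\ell(i)+t_{\ell'}(i))$, so this part is at most $\tau(\|t_\ell\|_1+\|t_{\ell'}\|_1)\le 2\tau$, which is $\le a^2/4$ by the choice of $k_0$ in \eqref{5-8-0-Ta} (taking $C_1\ge16$, so that $2^{k_0}\ge 16M(\log M)^5/n$) together with $a^2 2^k\ge n/(M\log M)$. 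On $\{m_i\ge\tau\}=J_\ell\cup J_{\ell'}$, where $J_\ell:=\{i\in I^c:t_\ell(i)\ge\tau\}$, the pointwise estimate and $m_i\le t_\ell(i)+t_{\ell'}(i)$ give $\sum_{J_\ell\cup J_{\ell'}}(t_\ell(i)-t_{\ell'}(i))^2\le p^p\de^p\sum_{J_\ell\cup J_{\ell'}}(t_\ell(i)+t_{\ell'}(i))$, and here one bounds $\sum_{J_\ell\cup J_{\ell'}}t_\ell\le\sum_{J_\ell}t_\ell+\sum_{J_{\ell'}\setminus J_\ell}t_\ell\le S+\tau\,|J_{\ell'}|\le S+\tau\cdot(S/\tau)=2S$, using the definition \eqref{4-1-b} of $S$ and the cardinality bound $|J_{\ell'}|\le\tau^{-1}\sum_{J_{\ell'}}t_{\ell'}\le S/\tau$; symmetrically for $t_{\ell'}$, so this part is $\le 4p^p\de^p S$. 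Combining, $a^2/2\le a^2/4+4p^p\de^p S$, i.e. $\de^p S\ge a^2/(16p^p)$. For \eqref{6-9-0-0-Ta} I would then feed $\min_{\ell\ne\ell'}\|f_\ell-f_{\ell'}\|_\infty^p\le C_p^p\,n\log M/(2^kM)$ from \eqref{6-5-0-Ta} into \eqref{5-24-Ta}, getting $S\ge c\,a^2 2^k M/(n\log M)$, and use $a2^{k/2}\ge\sqrt{n/(M\log M)}$ twice to rewrite this as $S\ge c_p\,a2^{k/2}\sqrt{M/(n\log M)}\,(\log M)^{-1}\ge c_p(\log M)^{-2}$.

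The one delicate point is the pointwise estimate: the naive bound $\|t_\ell-t_{\ell'}\|_\infty\le p\de$ (valid since $\|f_\ell\|_\infty,\|f_{\ell'}\|_\infty\le1$) would only produce $\de\,S\gtrsim a^2$, which is strictly weaker than the required $\de^p\,S\gtrsim a^2$ in the relevant small‑$\de$ regime; recovering the correct power of $\de$ forces the interpolation between the two pointwise bounds rather than the use of either one alone. The second, more routine subtlety is the ``cross term'' $\sum_{J_{\ell'}\setminus J_\ell}t_\ell$ in the big part — the diagonal sums $\sum_{J_\ell}t_\ell$ are $\le S$ by definition, but the cross sums have to be absorbed through $|J_{\ell'}|\le S/\tau$, which is exactly where the threshold $\tau$ (hence the specific size of $k_0$) enters.
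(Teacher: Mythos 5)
Your proof is correct and is essentially the paper's own argument: the paper packages your pointwise interpolation bound as the truncated inequality \eqref{4-7-a} (i.e. $|a^p-b^p|^2\le 2^pp^2(a_\eta^p+b_\eta^p)|a-b|^p+2\eta^p(a^p+b^p)$ with $\eta^p=\tau$), sums it over $I^c$ against the separation \eqref{6-4-0-Ta}, absorbs the resulting $4\tau$ into $a^2/4$ via the choice of $k_0$ in \eqref{5-8-0-Ta}, and then deduces \eqref{6-9-0-0-Ta} from \eqref{6-5-0-Ta} exactly as you do. Your only deviation --- splitting the index set at $m_i\ge\tau$ and controlling the cross term via $|J_{\ell'}|\le S/\tau$ instead of truncating each function separately --- is a cosmetic repackaging that changes nothing essential.
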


\begin{proof}
We will use  the following  inequality.  If $1\leq p\leq 2$, $a, b>0$ and $\eta>0$, then
\begin{equation}\label{4-7-a}
|a^p-b^p|^2 \leq 2^p p^2 ( a_\eta^p+b_\eta^p)|b-a|^p + 2\eta^p (a^p+b^p),
\end{equation}
where $a_\eta =a\cdot  \chi_{[\eta,\infty)} (a)$ and  $b_\eta =b\cdot  \chi_{[\eta,\infty)} (b)$.  To show \eqref{4-7-a}, without loss of generality, we may assume that $a\ge b$.
If $a<\eta$, then
\[	|a^p-b^p|^2 \leq 2 a^p (a^p+b^p) \leq 2\eta^p (a^p+b^p). \]
If $a\ge \eta$, then $a=a_\eta$ and
\begin{align*}
|a^p-b^p|^2 &\leq ( p a^{p-1} |b-a|)^2 =p^2 a^{2p-2} |b-a|^p |b-a|^{2-p} \leq p^2 (2a)^p |b-a|^p\\
& \leq 2^p p^2 ( |a_\eta|^p +|b_\eta|^p) |b-a|^p.
\end{align*}
In either case, we have  \eqref{4-7-a}.

Next, note that  \eqref{4-7-a} implies that for each  $J\subset \Og_M$,  $x, y\in  B_p^M$ and   $\eta\ge  0$,
\begin{align}
\Bl\|  |R_J x|^p -|R_J y|^p \Br\|_{2}^2\leq  4 {\eta^p}  +  C_p\|x-y\|_{\infty}^p A,\label{5-19-0-Ta}
\end{align}
where
\[ A:=\max\Bl\{ \sum_{i\in J} |x(i)|^p\chi_{\{|x(i)|\ge \eta\}}(i),\   \sum_{i\in J}
|y(i)|^p\chi_{\{|y(i)|\ge \eta\}}(i)\Br \}.\]
Thus, setting $\eta=\tau^{1/p}$, and using  \eqref{6-4-0-Ta},  we have that for any $\ell,\ell'\in V$ with $\ell\neq \ell'$,
\begin{align}
\f {a^2} 2 \leq \sum_{i\in \Og_M\setminus I} \Bl||f_\ell (i)|^p-|f_{\ell'} (i)|^p\Br|^2 \leq 4 \tau +C_p\|f_\ell -f_{\ell'}\|_\infty^p S.\label{4-9b}
\end{align}
Since	$ a^2 2^k \ge \f {n} {M\log M}$, we obtain  from \eqref{5-8-0-Ta} with $C=C_1$ that
\[ 4 \tau=2^{-k-k_0+3}(\log M)^4 \leq \f {2^{-k+4}n }{C M (\log M)^{2+\f {4}{2-p}} }\leq \f 14 a^2.\]
 The estimate \eqref{5-24-Ta} then follows from \eqref{4-9b}.

Finally, using \eqref{6-5-0-Ta}, \eqref{5-24-Ta} and the assumption $ a^2 2^k \ge \f {n} {M\log M}$, we obtain
\begin{align*}
S \ge c_p   \f {a^2 2^k m} {n \log M}\ge c_p a 2^{k/2} \sqrt{\f m {n\log M}} \f 1 {\log M}\ge \f {c_p } {(\log M)^2},
\end{align*}
proving \eqref{6-9-0-0-Ta}.
\end{proof}

Our aim is to show \eqref{4-1:eq}, which is an improvement of the first inequality in \eqref{6-9-0-0-Ta}.
According to \eqref{5-24-Ta}, for the proof of  \eqref{4-1:eq},   it will suffice to show that  there exist  two distinct $\ell, \ell'\in V$ such that
\begin{equation}\label{4-10a}
\|f_\ell -f_{\ell'}\|_{\infty} ^p \leq C_p  S  \f {n} {2^k} \f {\log M} M.
\end{equation}
The idea is to construct a set $U=U_S$ with the following two properties:
\begin{enumerate}[\rm (i)]
\item  $\{f_\ell-f_{\ell_0}:\  \ \ell\in V\} \subset U$ for some $\ell_0\in V$;

	\item  there exists a partition $\{E_\al\}_{\al\in \cA}$ of $U$ such that $|\cA|<|V|$ and  $$\diam(E_\al, \|\cdot\|_\infty)\leq  C_p  \Bl(S  \f {n} {2^k} \f {\log M} M\Br)^{1/p},\   \  \forall \al\in\cA.$$
\end{enumerate}
Since $|V|>N_{k-2}$, to ensure  (ii), it suffices to prove that
	$$ e_{2^{k-2}}(U, \|\cdot\|_\infty)\leq  C_p  \Bl(S  \f {n} {2^k} \f {\log M} M\Br)^{1/p}.$$

To show \eqref{4-10a}, we need two additional lemmas. For $\xi\ge 0$, we define
\begin{align*}
U(\xi) =\xi B^M_p +\tau^{\f 1p-\f 12} B_2^M\  \ \text{and}\   \   X_n(\xi) =U(\xi)\cap X_n.
\end{align*}

\begin{Lemma}\label{lem-6-5-0-Ta} Let $\xi=(3S)^{1/p}$. Then
	\begin{equation}
	\{ f_\ell-f_{\ell'}:\  \ \ell, \ell'\in V \}\subset  2 X_n(\xi).
	\end{equation}
\end{Lemma}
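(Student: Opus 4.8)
The plan is to reduce the claim to a simple three-block coordinate decomposition. Since $X_n$ is a linear subspace we have $2X_n(\xi)=2U(\xi)\cap X_n$ and each difference $f_\ell-f_{\ell'}$ lies in $X_n$ automatically; hence it suffices to produce, for every pair $\ell,\ell'\in V$, a splitting $f_\ell-f_{\ell'}=g+h$ with $\|g\|_p\le 2\xi=2(3S)^{1/p}$ and $\|h\|_2\le 2\tau^{1/p-1/2}$. I will take this splitting to be the coordinate decomposition into three disjoint blocks: the exceptional index set $I$; the set $L:=L_\ell\cup L_{\ell'}$ of large coordinates on $I^c$, where $L_\ell:=\{i\in\Og_M\setminus I:\ |f_\ell(i)|\ge\tau^{1/p}\}$ (equivalently $t_\ell(i)\ge\tau$); and the remaining set $\Og_M\setminus(I\cup L)$ of small coordinates on $I^c$. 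Concretely, $g:=R_{I\cup L}(f_\ell-f_{\ell'})$ and $h:=R_{\Og_M\setminus(I\cup L)}(f_\ell-f_{\ell'})$.

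For $\|g\|_p$ I would bound the two disjoint pieces $R_I$ and $R_L$ separately. On $I$, estimate \eqref{6-6-0-Ta} gives $\|R_I(f_\ell-f_{\ell'})\|_p\le C_p\Bl(\f{n}{M(\log M)^3}\Br)^{1/p}$, and combining this with the crude lower bound $S\ge c_p(\log M)^{-2}$ from \eqref{6-9-0-0-Ta} and the standing hypothesis $M\ge C n(\log M)$ (with $C$ a sufficiently large constant depending only on $p$) shows the right-hand side is at most $S^{1/p}$. On $L$, the triangle inequality reduces matters to $\|R_Lf_\ell\|_p$ and $\|R_Lf_{\ell'}\|_p$; splitting $\|R_Lf_\ell\|_p^p=\sum_{i\in L_\ell}|f_\ell(i)|^p+\sum_{i\in L\setminus L_\ell}|f_\ell(i)|^p$, the first sum is $\le S$ by the definition of $S$, and the second is $\le|L_{\ell'}|\tau\le S$ because each such coordinate is below $\tau^{1/p}$ while $|L_{\ell'}|\le S/\tau$; hence $\|R_L(f_\ell-f_{\ell'})\|_p\le 2(2S)^{1/p}$. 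Since $I\cap L=\emptyset$, we get $\|g\|_p^p\le S+2^{p+1}S\le 3\cdot 2^pS=(2\xi)^p$ (using $1\le 2^p$), i.e. $\|g\|_p\le 2\xi$.

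For $\|h\|_2$ I use that on $\Og_M\setminus(I\cup L)$ both $f_\ell$ and $f_{\ell'}$ have all coordinates below $\tau^{1/p}$ in absolute value, so for such $i$ one has $|f_\ell(i)|^2=|f_\ell(i)|^p|f_\ell(i)|^{2-p}\le\tau^{(2-p)/p}|f_\ell(i)|^p$ (this is the place where $p<2$ is used); summing over these indices and using $\|f_\ell\|_p\le 1$ gives $\|R_{\Og_M\setminus(I\cup L)}f_\ell\|_2^2\le\tau^{(2-p)/p}=(\tau^{1/p-1/2})^2$, likewise for $f_{\ell'}$, so $\|h\|_2\le 2\tau^{1/p-1/2}$ by the triangle inequality. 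Then $f_\ell-f_{\ell'}=g+h\in 2\xi B_p^M+2\tau^{1/p-1/2}B_2^M=2U(\xi)$, which with $f_\ell-f_{\ell'}\in X_n$ finishes the proof. I expect the only non-routine point to be the verification that the $I$-block is negligible against $S$: this is exactly where the preparatory estimates — the entropy-type bound \eqref{6-6-0-Ta} for the projection onto $I$ and the trivial lower bound \eqref{6-9-0-0-Ta} on $S$ — are needed.
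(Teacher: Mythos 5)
Your proposal is correct and follows essentially the same route as the paper: decompose $f_\ell-f_{\ell'}$ into the block on $I$ plus a block of large coordinates, which goes into $\xi B_p^M$ using \eqref{6-6-0-Ta}, \eqref{6-9-0-0-Ta} and the definition \eqref{4-1-b} of $S$, and a uniformly small remainder, which goes into $\tau^{1/p-1/2}B_2^M$ by $\ell_p$--$\ell_\infty$ interpolation (where $p<2$ enters), exactly as in the paper. The only real difference is cosmetic: the paper thresholds the difference $g=f_\ell-f_{\ell'}$ at $2\tau^{1/p}$ and controls its large part via the pointwise inequality $|u-v|^p\chi_{\{|u-v|\ge 2\eta\}}\le 2^p\bigl(|u|^p\chi_{\{|u|\ge \eta\}}+|v|^p\chi_{\{|v|\ge \eta\}}\bigr)$, whereas you threshold $f_\ell$ and $f_{\ell'}$ separately (taking $L=L_\ell\cup L_{\ell'}$) and control the off-level-set contribution by the counting bound $|L_{\ell'}|\le S/\tau$; both yield the same inclusion with comparable constants.
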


\begin{proof}

Fix $\ell, \ell'\in V$ and set $g:=g_{\ell, \ell'} =f_\ell-f_{\ell'}\in X_n$.  Our aim is  to show that $g\in 2 U(\xi)$.
First,  by \eqref{6-6-0-Ta} and \eqref{6-9-0-0-Ta}, we have
\begin{align*}
\sum_{i\in I} |g (i)|^p=\sum_{i\in I} |f_\ell (i) -f_{\ell'} (i)|^p \leq \f {C_p n } {M(\log M)^3}\leq  S,
\end{align*}
implying $g\chi_I \in S^{1/p}\cdot  B_p^M$.
Second,  setting $\eta=\tau^{1/p}$, and using \eqref{4-1-b}, we have
\begin{align*}
2S &\ge  \sum_{i \in  \Og_M\setminus  I } |f_{\ell}(i)|^p  \chi_{\{ |f_\ell(i)| \ge \eta\}} (i)+  \sum_{i \in  \Og_M\setminus  I } |f_{\ell'}(i)|^p  \chi_{\{ |f_{\ell'}(i)| \ge \eta\}} (i)\\
&\ge 2^{-p}  \sum_{i \in  \Og_M\setminus  I } |g(i)|^p  \chi_{\{ |g(i)| \ge 2\eta\}} (i),
\end{align*}
where the last step
uses the inequality,
\[ |u-v|^p \chi_{|u-v|\ge 2\eta} \leq 2^p \Bl( |u|^p \chi_{|u|\ge \eta}  +|v|^p \chi_{|v|\ge \eta}\Br),\  \ u, v\in\RR. \]
This implies that
\[ g \chi_{_{\{i\in I^c:\ \ |g(i)|\ge 2\eta\}}}\in 2 \cdot (2S)^{1/p} B_p^M. \]

Finally, we write $g=u+v$, where
\begin{align*}
v:&= g\chi_I+ g \chi_{_{\{i\in I^c:\ \ |g(i)|\ge 2\eta\}}},\  \ \text{and}\   \ u:=  g  \chi_{\{i\in I^c:\   |g(i)| < 2\eta\}}.
\end{align*}
Clearly,  $\f 12 v\in (3S)^{1/p}\cdot B_p^M$ and
$$\f u2 \in (\eta B_\infty^M) \cap B_p^M\subset \eta^{1-\f p2} \cdot B_2^M.$$
Thus, $g\in 2 U(\xi)$.
\end{proof}

%
%
%
%
%
%
%

\begin{Lemma}\label{prop-6-7-0-Ta} Let  $\xi>0$ be such that    \begin{equation}\label{6-11-0-Ta}
	\tau \leq \xi^{\f {2p} {2-p}} \Bl(\f {n} {2^k} \f {\log M} M\Br).
	\end{equation}
	Then
	\begin{equation}\label{6-12-0-Ta} e_{j}\Bl(X_n(\xi), \|\cdot\|_{\infty}\Br)\leq C \xi \cdot \Bl(\f {n} {j} \f {\log M} M\Br)^{1/p},\   \   j=1,2,\cdots, 2^k. \end{equation}
\end{Lemma}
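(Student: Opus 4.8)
\textbf{Plan of proof of Lemma \ref{prop-6-7-0-Ta}.}
The set $X_n(\xi)$ is the intersection of $X_n$ with the Minkowski sum $\xi B_p^M+\tau^{1/p-1/2}B_2^M$, and the plan is to cover the contribution of each summand separately in $\|\cdot\|_\infty$ and combine by the subadditivity of entropy numbers, $e_{i+j}(A+B,\|\cdot\|_\infty)\le e_i(A,\|\cdot\|_\infty)+e_j(B,\|\cdot\|_\infty)$. Let $P$ be the $\ell_2$-orthogonal projection of $\RR^M$ onto $X_n$; for $f\in X_n(\xi)$ write $f=a+b$ with $\|a\|_p\le\xi$, $\|b\|_2\le\tau^{1/p-1/2}$, so that $f=Pf=Pa+Pb$. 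The term $Pb$ is essentially free: since $\|Pb\|_2\le\|b\|_2\le\tau^{1/p-1/2}$ we have $Pb\in\tau^{1/p-1/2}(X_n\cap B_2^M)$, hence by Lemma \ref{lem-6-1-0-Ta} (case $p=2$, $q=\infty$) the $\ell_\infty$-entropy of the set of all such $Pb$ at index $j$ is at most $C\tau^{1/p-1/2}\bigl(\tfrac nj\tfrac{\log M}M\bigr)^{1/2}$. Raising hypothesis \eqref{6-11-0-Ta} to the power $\tfrac1p-\tfrac12$ gives $\tau^{1/p-1/2}\le\xi\bigl(\tfrac n{2^k}\tfrac{\log M}M\bigr)^{1/p-1/2}\le\xi\bigl(\tfrac nj\tfrac{\log M}M\bigr)^{1/p-1/2}$ for $j\le2^k$, and multiplying by $\bigl(\tfrac nj\tfrac{\log M}M\bigr)^{1/2}$ turns this bound into exactly $C\xi\bigl(\tfrac nj\tfrac{\log M}M\bigr)^{1/p}$, the right-hand side of \eqref{6-12-0-Ta}.

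The core of the argument is the $\xi B_p^M$ part, i.e.\ estimating $e_j\bigl(\xi\,P(B_p^M),\|\cdot\|_\infty\bigr)$. The naive bound fails because the orthogonal projection does not preserve $\ell_p$-norms, so $P(B_p^M)$ is \emph{not} contained in a bounded multiple of $X_n\cap B_p^M$. I would handle this by peeling off coordinates according to their size. Fix a threshold $\sigma>0$ (at the end chosen comparable to $\tau^{1/p}$, matching the parameter $\eta$ used in Lemmas \ref{lem-6-4-0-Ta}--\ref{lem-6-5-0-Ta}) and split $a=a'+a''$ with $a'=a\chi_{\{|a|>\sigma\}}$, $a''=a\chi_{\{|a|\le\sigma\}}$. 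Then $a'$ is supported on at most $N_\sigma:=(\xi/\sigma)^p$ coordinates with $\|a'\|_p\le\xi$, while $\|a''\|_\infty\le\sigma$ and, by interpolation, $\|a''\|_2^2\le\sigma^{2-p}\|a\|_p^p\le\sigma^{2-p}\xi^p$. Consequently $Pa''\in\sigma^{1-p/2}\xi^{p/2}(X_n\cap B_2^M)$, controlled exactly as the $\tau^{1/p-1/2}B_2^M$ term above (this is where the choice $\sigma\asymp\tau^{1/p}$ enters, via \eqref{6-11-0-Ta}). For the sparse term $Pa'$ I would cover, over the $\binom M{N_\sigma}\le M^{N_\sigma}$ choices of support $I$ with $|I|=N_\sigma$, the images $\xi\,P(B_p^I)$, using Lemma \ref{lem-6-1-0-Ta} restricted to the $N_\sigma$-dimensional coordinate subspaces together with the Nikol'skii inequality \eqref{5-4-Ta} (with $K=1$) and a Maurey-type empirical averaging argument to absorb the $M^{N_\sigma}$ factor; it is the smallness of $N_\sigma$ that lets the $\log M$ losses be beaten. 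Combining the three pieces $Pa'$, $Pa''$, $Pb$ by subadditivity of entropy numbers (say $j_1=j_2=j_3=j$, then relabel) and optimizing $\sigma$ against \eqref{6-11-0-Ta} and the running hypothesis $M\ge Cn\log M$ gives \eqref{6-12-0-Ta}.

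The main obstacle is precisely the sparse-coordinate term: one must intersect with $X_n$ in order to get the sharp $n$-dependence (which is what Lemma \ref{lem-6-1-0-Ta} supplies), yet the Minkowski-sum structure forces a projection that only preserves $\ell_2$-norms, and the challenge is to keep the $\ell_p$-rate $(\,\cdot\,)^{1/p}$ rather than the weaker $\ell_2$-rate $(\,\cdot\,)^{1/2}$. I expect that a single coordinate threshold is not quite enough, and that — as in the proof of Theorem 16.8.2 of \cite{Ta} — one has to iterate the peeling over a geometric sequence of thresholds (equivalently, run the estimate scale by scale in $j$), at each scale invoking Lemma \ref{lem-6-1-0-Ta}, possibly with a finite intermediate exponent $q$ in place of $\infty$, and summing the resulting geometric series of error terms; the point of Appendix II is to carry this out in the simplest form sufficient for our purposes. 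The remaining verifications — that $\sigma\asymp\tau^{1/p}$ satisfies all the inequalities dictated by \eqref{6-11-0-Ta} and the standing assumption $M\ge C n\log M$, and the bookkeeping of absolute constants — are routine.
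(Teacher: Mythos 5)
Your reduction of the $\tau^{1/p-1/2}B_2^M$ part (and of the small-coordinate piece $Pa''$) to Lemma \ref{lem-6-1-0-Ta} with exponent $2$ is sound, but the part you yourself call the core --- the $\xi B_p^M$ component --- is left unproven, and the specific route you sketch for it cannot work as stated. With $\sigma\asymp\tau^{1/p}$ the sparse piece $a'$ sits on up to $N_\sigma\asymp\xi^p/\tau$ coordinates; by \eqref{6-9-0-0-Ta} and \eqref{5-6-Ta-1} one has $\xi^p=3S\ge c(\log M)^{-2}$ and $\tau=2^{-k-k_0+1}(\log M)^4$, so $N_\sigma\gtrsim 2^{k+k_0}(\log M)^{-6}$, which by \eqref{5-8-0-Ta} is of order $2^k\f{M}{n}(\log M)^{4/(2-p)}$ and in particular vastly exceeds $2^k$. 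A union bound over the $\binom{M}{N_\sigma}$ possible supports therefore costs $\log$-covering numbers of order $N_\sigma\log M\gg 2^k$, far beyond the entropy budget $j\le 2^k$ in \eqref{6-12-0-Ta}, and the proposed ``Maurey-type empirical averaging'' and iterated multi-scale peeling that would have to absorb this are only named, not carried out. Since you acknowledge that a single threshold is not enough and defer to the machinery of \cite{Ta} without supplying it, the proposal is a plan with an open central step rather than a proof. There is also a structural worry at the start: an element of $X_n(\xi)$ need not decompose as $a+b$ with $a,b\in X_n$, and your fix via the orthogonal projection $P$ is exactly what destroys the $\ell_p$ information --- which is the obstruction you then do not overcome.

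For comparison, the paper avoids projections and peeling altogether by a duality argument. One bounds the gauge of the polar $W$ of $U(\xi)$ by $\|x\|_W\le \xi\|x\|_{p'}+\tau^{1/p-1/2}\|x\|_2$, applies Lemma \ref{lem-6-1-0-Ta} with the dual exponent $p'>2$ to get
$e_j(B_2^M\cap X_n,\|\cdot\|_W)\le C\xi\bl(\f nj\f{\log M}M\br)^{1/p-1/2}$ for $j\le 2^k$ (hypothesis \eqref{6-11-0-Ta} absorbs the $\tau^{1/p-1/2}$ term), then invokes the duality of entropy numbers (Theorem 16.8.10 of \cite{Ta}) to transfer this to $e_j(X_n(\xi),\|\cdot\|_2)$, and finally composes with $e_j(X_n\cap B_2^M,\|\cdot\|_\infty)$ through the submultiplicative bound $e_{2j}(X_n(\xi),\|\cdot\|_\infty)\le 2\,e_j(X_n(\xi),\|\cdot\|_2)\,e_j(X_n\cap B_2^M,\|\cdot\|_\infty)$, which yields exactly the $\ell_p$-rate in \eqref{6-12-0-Ta}. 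If you want to salvage your approach, this duality step is the missing idea that replaces both the projection and the support union bound.
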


\begin{proof}
Note first that  $U(\xi)=\xi B_p^M+ \tau^{\f 1p-\f12} B_2^M$ is a symmetric convex body in $\RR^M$.
Let $W$ denote the polar of $U(\xi)$; that is,
\[ W:=\Bl\{x\in\RR^M:\   \   \|x\|_W:= \max_{y\in U(\xi)} x\cdot y \leq 1\Br\}.\]
Then  for each $x\in W$, \begin{align*}
\|x\|_W&=\sup_{\|u\|_p\leq \xi} \sup_{\|v\|_2\leq \tau^{\f1p-\f12}} (x\cdot u +x\cdot v)  \leq \xi\|x\|_{p'} +\tau^{\f1p-\f 12} \|x\|_2.
\end{align*}
By Lemma \ref{lem-6-1-0-Ta}, this implies  that  for $1\leq j\leq 2^k$,
\begin{align*}
&e_{j} (B_2^M \cap X_n, \|\cdot\|_W) \leq \xi\cdot  e_{j} (B_2^M\cap X_n , \|\cdot\|_{p'}) +2\tau^{\f1p-\f 12}\\
&\leq C \xi\cdot \Bl(\f {n}{j} \f { \log M} M\Br) ^{\f 1p-\f12} +2 \tau^{\f1p-\f 12}\leq C \xi\cdot \Bl(\f {n}{j} \f { \log M} M\Br) ^{\f 1p-\f12},
\end{align*}
where the last step uses \eqref{6-11-0-Ta}.
By duality (see \cite[Theorem 16.8.10]{Ta}), we deduce
\begin{equation}\label{6-13-0-Ta}
e_j( X_n(\xi), \|\cdot\|_2) \leq C\xi\cdot \Bl(\f {n}{j} \f { \log M} M\Br) ^{\f 1p-\f12},\   \   j=1,2,\cdots, 2^k.
\end{equation}
Finally, using \eqref{6-13-0-Ta} and Lemma \ref{lem-6-1-0-Ta},
we  have  that for $1\leq j\leq 2^{k-1}$,
\[ e_{2j} (X_n(\xi), \|\cdot\|_\infty) \leq 2 e_j (X_n(\xi), \|\cdot\|_2) e_j(X_n\cap B_2^M, \|\cdot\|_\infty)\leq C\xi\cdot \Bl( \f {n}j \f {\log M}M\Br)^{\f 1p}.  \]
The stated estimate then follows by monotonicity.
\end{proof}

Now we are in a position to prove Proposition \ref{prop-5-2-Ta}.

\begin{proof}[{Proof of Proposition \ref{prop-5-2-Ta}}] Let $\xi=(3S)^{1/p}$.
A straightforward calculation using \eqref{6-9-0-0-Ta} and \eqref{5-8-0-Ta} shows that  the condition \eqref{6-11-0-Ta}  is satisfied.  Consequently, using Lemma \ref{prop-6-7-0-Ta}, we obtain
\begin{equation}\label{6-16-0-Ta} e_{2^{k-2}}\Bl(X_n(\xi), \|\cdot\|_{\infty}\Br)\leq C \xi \cdot \Bl(\f {n} {2^k} \f {\log M} M\Br)^{1/p}.  \end{equation}
Thus, we may partition the set $2 X_n(\xi)$ into $N_{k-2}$ sets $E_\ga$, $\ga\in\Ga$ such that
\begin{equation}\label{4-17a}
\diam(E_\ga, \|\cdot\|_\infty) \leq C	\xi \cdot \Bl(\f {n} {2^k} \f {\log M} M\Br)^{1/p},\  \  \ \forall \ga\in\Ga.
\end{equation}

On the other hand, Lemma \ref{lem-6-5-0-Ta} implies that  for each fixed $\ell_0\in V$,
\[ \{ f_\ell-f_{\ell_0}:\  \ \ell \in V\}\subset 2 X_n(\xi).\]
Since $|V|> N_{k-2}$ and $|\Ga|= N_{k-2}$, we can find two distinct $\ell,\ell'\in V$ such that  $f_\ell-f_{\ell_0}, f_{\ell'}-f_{\ell_0}$ lie in a same set of the partition $\{E_\ga\}_{\ga\in\Ga}$, which, using \eqref{4-17a}, implies
\[ \|f_\ell -f_{\ell'}\|_\infty \leq C	\xi \cdot \Bl(\f {n} {2^k} \f {\log M} M\Br)^{1/p}.\]
Thus, there exist two distinct   $\ell, \ell'\in V$ such that    the estimate \eqref{4-10a} holds, which, using \eqref{5-24-Ta}, in turn implies  the desired estimate \eqref{4-1:eq}.
\end{proof}

\section*{Acknowledgement}
The authors would like to thank the referee very much for  careful reading of their paper and many  helpful  suggestions and comments.

\Addresses

\end{document}